\renewcommand{\setminus}{\smallsetminus}
\DeclareMathOperator{\newsum}{\mathsf{\Sigma}}
\renewcommand{\sum}{\newsum}
\renewcommand{\inf}{\mathsf{inf}}
\renewcommand{\min}{\mathsf{min}}
\renewcommand{\max}{\mathsf{max}}
\renewcommand{\leq}{\leqslant}
\renewcommand{\geq}{\geqslant}
\renewcommand{\setminus}{\smallsetminus}
\numberwithin{equation}{section}
\newtheorem{thm}[equation]{Theorem}
\newtheorem*{thm*}{Theorem}
\newtheorem{prop}[equation]{Proposition}
\newtheorem{cor}[equation]{Corollary}
\newtheorem*{cor*}{Corollary}
\newtheorem*{conj*}{Conjecture}
\newtheorem{lem}[equation]{Lemma}
\newtheorem{qu}[equation]{Question}
\newtheorem*{qu*}{Question}
\theoremstyle{definition}
\newtheorem*{df}{Definition}
\newtheorem{ex}[equation]{Example}
\newtheorem{ex*}{Example}
\newtheorem{rem}[equation]{Remark}
\theoremstyle{plain}
\title[Sums of squares in function fields]{Sums of squares in function fields over henselian discretely valued fields}
\date{22.02.2023}
\author[G.~Manzano-Flores]{Gonzalo Manzano-Flores}
\address{Universidad de Santiago de Chile, Facultad de Ciencias, Avenida Libertador Bernardo O'Higgins nº 3363, Estaci\'on Central, Santiago, Chile.}
\email{gonzalo.manzano@usach.cl}
\begin{document}


\begin{abstract}

\medskip\noindent
Let $n\in\mathbb{N}$ and let $K$ be a field with a henselian discrete valuation of rank $n$ with hereditarily euclidean residue field. Let $F/K$ be a function field in one variable in one variable. It is known that every sum of squares is a sum of $3$ squares. We determine the order of the group of nonzero sums of $3$ squares modulo sums of $2$ squares in $F$ in terms of equivalence classes of certain discrete valuations of $F$ of rank at most $n.$ In the case of function fields of hyperelliptic curves of genus $g,$ K.J. Becher and J. Van Geel showed that the order of this quotient group is bounded by $2^{n(g+1)}.$ We show that this bound is optimal. Moreover, in the case where $n=1,$ we show that if $F/K$ is a hyperelliptic function field such that the order of this quotient group is $2^{g+1},$ then $F$ is nonreal. 

\medskip\noindent
{\sc Keywords:} Sums of squares, valuation, function field in one variable, quadratic form, local-global principle, hereditarily pythagorean field.

\medskip\noindent
{\sc Classification (MSC 2020):} 11E81, 12D15, 12J10
\end{abstract}

\maketitle

\section{Introduction}

Let $K$ be a field. For $d\in\mathbb{N},$ let $\mathsf{S}_{d}(K)$ denote the set of nonzero sums of $d$ squares in $K.$ Let $\mathsf{S}(K)=\displaystyle\bigcup_{n\in\mathbb{N}}\mathsf{S}_{n}(K),$ the multiplicative group of nonzero sums of squares in $K.$ The Pythagoras number $p(K)$ of a field $K$ is the smallest integer $d$ such that $\mathsf{S}(K)=\mathsf{S}_{d}(K)$ if such an integer $d$ exists, otherwise it is infinite. We call a finitely generated field extension of transcendence degree one a \textit{function field in one variable}, for short. In \cite{Witt34} it was shown that $p(F)=2$ for every function field in one variable $F$ over $\mathbb{R}.$ However, the same does not hold for function field in one variables over $\mathbb{R}(\!(t_{1})\!)\ldots(\!(t_{n})\!)$ when $n\geq 1.$ For example, S. Tikhonov in \cite[Example 3.8]{TVY06} showed that $tX$ is a sum of three squares but not a sum of two squares in the function field $F$ of the curve $Y^{2}=(tX-1)(X^{2}+1)$ over $\mathbb{R}(\!(t)\!),$ which shows that $p(F)>2.$ However, in that example it was not known whether $tX$ was the only nontrivial element of the quotient group of nonzero sums of squares in $F$ modulo the subgroup of nonzero sums of two squares in $F.$


Let $n\in\mathbb{N}.$ Consider a function field in one variable $F$ over the base field $\mathbb{R}(\!(t_{1})\!)\ldots(\!(t_{n})\!).$ It was shown in \cite[Theorem 6.12]{BGVG} that $p(F)\leq 3$ and that the quotient $\mathsf{S}(F)/\mathsf{S}_{2}(F),$ which measures the failure of every sum of squares to be a sum of two squares, is a finite group. In this article, we determine the order of $\mathsf{S}(F)/\mathsf{S}_{2}(F)$ in terms of the cardinality of a certain finite set of valuations on $F.$ In fact, we consider situations with slightly more general base fields (than $\mathbb{R}(\!(t_{1})\!)\ldots(\!(t_{n})\!)$) namely fields carrying a nontrivial henselian valuation with value group $\mathbb{Z}^{n}$ endowed with the lexicographic order, and such that the Pythagoras number of the rational function field over its residue field is bounded by $2^{r},$ for some $r\in\mathbb{N}^{+}.$ Concretely, for a function field in one variable $F/\mathbb{R}(\!(t_{1})\!)\ldots(\!(t_{n})\!),$ we show that $|\mathsf{S}(F)/\mathsf{S}_{2}(F)|= 2^{m},$ where $m$ is the number of equivalence classes of discrete valuations of rank at most $n$ and with nonreal residue field not containing $\sqrt{-1}.$ In this case, this generalizes \cite[Theorem 6.12]{BGVG}. It is natural to wonder whether we can bound the order of $\mathsf{S}(F)/\mathsf{S}_{2}(F)$ in terms of the genus of $F/K.$ In \cite{BG20}, reduction theory of arithmetic curves over $\mathbb{R}[\![t]\!]$ is applied to obtain the bound $|\mathsf{S}(F)/\mathsf{S}_{2}(F)|\leq 2^{g+1}$ for any function field in one variable $F$ of genus $g$ over $\mathbb{R}(\!(t)\!).$ In the case of a function field $F$ of a hyperelliptic curve of genus $g$ over $\mathbb{R}(\!(t_{1})\!)\ldots(\!(t_{n})\!),$ it is a consequence of \cite[Theorem 3.10]{BV09} that $$|\mathsf{S}(F)/\mathsf{S}_{2}(F)|\leq 2^{n(g+1)}.$$ It is known that in the case where $n=1$ and $g=1,$ the bound is optimal. In fact, for the nonreal function field $F$ of the curve $Y^{2}=-(X^{2}+1)(X^{2}+t^{2})$ over $\mathbb{R}(\!(t)\!),$ it was shown in \cite[Example 5.12]{BV09} that $|\mathsf{S}(F)/\mathsf{S}_{2}(F)|=2^{2}.$ In this article, for any $g\in\mathbb{N},$ we will construct a hyperelliptic curve of genus $g$ over $K$ such that $|\mathsf{S}(F)/\mathsf{S}_{2}(F)|=2^{n(g+1)}.$ This shows the optimality of the previously mentioned bounds for function field in one variables over $\mathbb{R}(\!(t)\!)$ or for hyperelliptic function fields over $\mathbb{R}(\!(t_{1})\!)\ldots(\!(t_{n})\!).$ We ask in \Cref{optimality} whether our bounds in terms of valuations may equally serve to find a bound in terms of $g$ and $n$ for function field in one variables $F/K$ of genus $g.$

\section{Preliminaries} 

For a ring $R,$ let $R^{\times}$ denote its group of invertible elements. Let $K$ be a field. For a valuation $v$ on $K,$ we denote by $\mathcal{O}_{v}$ the valuation ring associated to $v,$ by $\Gamma_{v}$ the value group $v(K^{\times}),$ which is an ordered abelian group, by $\mathfrak{m}_{v}$ the maximal ideal of $\mathcal{O}_{v}$ and by $\kappa_{v}$ the residue field $\mathcal{O}_{v}/\mathfrak{m}_{v}.$ The residue in $\kappa_{v}$ of an element $a\in\mathcal{O}_{v}$ will be denoted by $\overline{a}.$ For a valuation ring $\mathcal{O}$ of $K,$ the corresponding valuation on $K$ with valuation ring $\mathcal{O}$ and value group $K^{\times}/\mathcal{O}^{\times}$ is denoted by $v_{\mathcal{O}}.$ We call two valuations $v$ and $w$ on $K$ \textit{equivalent} if $\mathcal{O}_{v}=\mathcal{O}_{w},$ equivalently, if and only if there exists an order-isomorphism $\gamma:\Gamma_{w}\to\Gamma_{v}$ such that $v=\gamma\circ w.$ See \cite[Proposition 2.1.3]{EP}.

The \textit{rank} of an ordered abelian group $\Gamma$ is defined as the number of its proper convex subgroups. For a valuation $v$ on $K,$ we denote by $\mathsf{rk}(v)\in\mathbb{N}$ the rank of $\Gamma_{v},$ and we say that $\mathsf{rk}(v)$ is the rank of $v.$ Let $n\in\mathbb{N}.$ We call a valuation on $K$ having value group $(\mathbb{Z}^{n},\leq_{\mathsf{leq}}),$ where $\leq_{\mathsf{leq}}$ is the lexicographic order, a $\mathbb{Z}^{n}$-\emph{valuation} for short. Note that $(\mathbb{Z}^{0},\leq_{\mathsf{leq}})$ is the trivial additive group and that for a $\mathbb{Z}^{n}$-valuation $v$ on $K$ we have that $\mathsf{rk}(v)=n.$

For $i\in\mathbb{N},$ let $V_{i}(K)$ be the set of $\mathbb{Z}^{i}$-valuations on $K,$ where $V_{0}(K)$ is the trivial valuation on $K.$ We set $$V(K)=\bigcup_{i\in\mathbb{N}}V_{i}(K).$$
Let $\Omega_{n}(K)=\{\mathcal{O}_{v}\mid v\in V_{n}(K)\},$ where $\Omega_{0}(K)=K.$ We set $$\Omega(K)=\bigcup_{i\in\mathbb{N}}\Omega_{i}(K).$$
We observe that every $\mathcal{O}\in\Omega_{n}(K)$ is a valuation ring of Krull dimension $n$ by \cite[Lemma 2.3.1]{EP}. For valuation rings $\mathcal{O},\mathcal{O}'$ of a field $K$ and valuations $w,w'$ on $K$ corresponding to $\mathcal{O}$ and $\mathcal{O}',$ respectively, we say that $\mathcal{O}'$ is a \textit{coarsening of} $\mathcal{O}$ or that $\mathcal{O}$ is a \textit{refinement of} $\mathcal{O}',$ respectively, $w'$ is a coarsening of $w$ or $w$ is a refinement of $w,$ if $\mathcal{O}\subseteq \mathcal{O}'.$ Note that a field $K$ is trivially a coarsening of every valuation ring of itself. We denote by $\mathcal{O}\mathcal{O}'$ the smallest subring of $K$ containing both $\mathcal{O}$ and $\mathcal{O}'.$

Let $n,d\in\mathbb{N}$ with $d\leq n.$ We denote by $\pi_{d}:\mathbb{Z}^{n}\to\mathbb{Z}^{d}$ the projection on the first $d$ components. Note that $\pi_{d}$ is a homomorphism of ordered abelian groups with respect to the lexicographic orders on $\mathbb{Z}^{n}$ and $\mathbb{Z}^{d}.$ Dually, we denote by $\pi^{d}:\mathbb{Z}^{n}\to\mathbb{Z}^{d}$ the projection on the last $d$ components of $\mathbb{Z}^{n}.$ Note that $\pi^{d}$ is a group homomorphism, but it is not order-preserving when $d<n.$

\begin{prop}\label{coarsening} Let $n\in\mathbb{N}.$ Let $v$ be a $\mathbb{Z}^{n}$-valuation on $K.$ Let $\mathcal{O}'$ be a coarsening of $\mathcal{O}_{v}.$ Then $\mathcal{O}'=\mathcal{O}_{\pi_{r}\circ v},$ for a unique $r\leq n.$
\end{prop}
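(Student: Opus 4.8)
The plan is to reduce the statement to the classical order-theoretic classification of coarsenings of a valuation. Recall that for a valuation $v$ on $K$ with value group $\Gamma_{v}$, the coarsenings of $\mathcal{O}_{v}$ are in bijection with the convex subgroups of $\Gamma_{v}$: to a convex subgroup $\Delta\leq\Gamma_{v}$ one associates the valuation $\overline{v}:=q_{\Delta}\circ v$, where $q_{\Delta}:\Gamma_{v}\to\Gamma_{v}/\Delta$ is the quotient map and $\Gamma_{v}/\Delta$ carries the total order induced from $\Gamma_{v}$ via convexity of $\Delta$; see \cite[\S2.3]{EP}. So the first task is to determine all convex subgroups of $(\mathbb{Z}^{n},\leq_{\mathsf{leq}})$.

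The second step — which I regard as the (modest) computational heart of the argument — is to show that the convex subgroups of $(\mathbb{Z}^{n},\leq_{\mathsf{leq}})$ are precisely the subgroups $H_{r}:=\ker(\pi_{r})=\{0\}^{r}\times\mathbb{Z}^{n-r}$ for $r=0,1,\dots,n$, and that they form a strictly descending chain $\mathbb{Z}^{n}=H_{0}\supsetneq H_{1}\supsetneq\cdots\supsetneq H_{n}=\{0\}$. That each $H_{r}$ is convex is immediate from the definition of $\leq_{\mathsf{leq}}$. For the converse, let $\Delta$ be a convex subgroup; if $\Delta=\{0\}$ it is $H_{n}$, so assume $\Delta\neq\{0\}$ and let $r<n$ be the largest index with $\pi_{r}(\Delta)=\{0\}$. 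Then $\Delta\subseteq H_{r}$ by choice of $r$, and picking $\delta\in\Delta$ whose $(r+1)$-st coordinate is positive one checks, for each $j\in\{r+1,\dots,n\}$, that $0<_{\mathsf{leq}}e_{j}<_{\mathsf{leq}}N\delta$ once $N$ is large (here $e_{j}$ denotes the $j$-th standard basis vector), so $e_{j}\in\Delta$ by convexity; hence $H_{r}=\langle e_{r+1},\dots,e_{n}\rangle\subseteq\Delta$, and therefore $\Delta=H_{r}$. The only real work here is keeping the lexicographic comparisons straight; I do not expect any genuine obstacle beyond this.

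Finally I translate back. Given a coarsening $\mathcal{O}'$ of $\mathcal{O}_{v}$, the correspondence attaches to it a unique convex subgroup $\Delta$ of $\Gamma_{v}=\mathbb{Z}^{n}$, which by the second step equals $H_{r}$ for exactly one $r\leq n$. Since $\pi_{r}:\mathbb{Z}^{n}\to\mathbb{Z}^{r}$ is a surjective homomorphism of ordered abelian groups with kernel $H_{r}$, it induces an order isomorphism $\mathbb{Z}^{n}/H_{r}\xrightarrow{\ \sim\ }(\mathbb{Z}^{r},\leq_{\mathsf{leq}})$ that identifies $q_{H_{r}}\circ v$ with $\pi_{r}\circ v$; in particular these two valuations have the same valuation ring, namely $\mathcal{O}'$, so $\mathcal{O}'=\mathcal{O}_{\pi_{r}\circ v}$. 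For the uniqueness of $r$: as $v$ is surjective onto $\mathbb{Z}^{n}$ and $\pi_{r}$ onto $\mathbb{Z}^{r}$, the valuation $\pi_{r}\circ v$ is a $\mathbb{Z}^{r}$-valuation, so $\mathcal{O}_{\pi_{r}\circ v}$ has Krull dimension $r$ by \cite[Lemma 2.3.1]{EP}; hence distinct values of $r$ yield distinct rings, and $r$ is determined by $\mathcal{O}'$.
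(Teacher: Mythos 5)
Your proof is correct and follows essentially the same route as the paper: both arguments reduce the claim to the fact that the coarsenings of $\mathcal{O}_{v}$ correspond to the convex subgroups of $(\mathbb{Z}^{n},\leq_{\mathsf{leq}})$, which form the chain $\ker(\pi_{r})$, $0\leq r\leq n$. The only difference is one of emphasis: the paper realizes the correspondence concretely via localization of $\mathcal{O}_{v}$ at the maximal ideal of $\mathcal{O}'$ and the induced order-epimorphism of value groups, while you cite the abstract correspondence from \cite[Section 2.3]{EP} and instead write out the classification of convex subgroups (which the paper merely asserts); both versions are complete.
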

\begin{proof} First, note that $K^{\times}/\mathcal{O}_{v}^{\times}$ is an abelian group, which has an order $\leq,$ given by $x\mathcal{O}_{v}^{\times}\leq y\mathcal{O}_{v}^{\times}$ if and only if $yx^{-1}\in\mathcal{O}_{v}.$ Thus $\Gamma_{v}$ is order-isomorphic to $K^{\times}/\mathcal{O}_{v}^{\times}.$ Let $\mathcal{O}=\mathcal{O}_{v}.$ Let $\mathfrak{p}$ be the maximal ideal of $\mathcal{O}'.$ Then $\mathfrak{p}$ is a prime ideal of $\mathcal{O}$ and localizing $\mathcal{O}$ at $\mathfrak{p}$ we obtain $\mathcal{O}'=\mathcal{O}_{\mathfrak{p}}.$ Thus, we have a surjective order-preserving group homomorphism of value groups $\varphi: K^{\times}/\mathcal{O}^{\times}\to K^{\times}/\mathcal{O}_{\mathfrak{p}}^{\times}$ sending $x\mathcal{O}^{\times}$ to $x\mathcal{O}_{\mathfrak{p}}^{\times}.$ In fact, if $a\mathcal{O}^{\times}\leq b\mathcal{O}^{\times},$ then $ab^{-1}\in\mathcal{O}\subseteq \mathcal{O}_{\mathfrak{p}}$ and hence $a\mathcal{O}_{\mathfrak{p}}\leq b\mathcal{O}_{\mathfrak{p}},$ for $a,b\in K^{\times}.$ Since $K^{\times}/\mathcal{O}^{\times}\simeq \mathbb{Z}^{n},$ then the quotient group $(K^{\times}/\mathcal{O}^{\times})/\ker \varphi$ is order-isomorphic to $\mathbb{Z}^{r},$ for some $r\in\mathbb{N}$ such that $r\leq n.$ Hence $K^{\times}/\mathcal{O}_{\mathfrak{p}}\simeq \mathbb{Z}^{r}$ by the first isomorphism theorem. Let $v'=\pi_{r}\circ v.$ Since $\pi_{r}$ is order-preserving, we have that $\mathcal{O}_{v'}\in\Omega_{r}(K),$ which is a coarsening of $\mathcal{O}_{v}.$ By \cite[Lemma 2.3.1]{EP} and since $\mathbb{Z}^{n}$ has only $n$ proper convex subgroups, we have that $\mathcal{O}'=\mathcal{O}_{v'}.$
\end{proof}

Let $v\in V(K)$ and set $n=\mathsf{rk}(v).$ For $1\leq i\leq n,$ we denote by $e_{i}^{n}$ the $n$-tuple $(e_{1},\ldots,e_{n})$ such that $$e_{j}=\left\{\begin{array}{ll} 1 & \mbox{ if $j=(n+1)-i$ ,}\\ 0 &\mbox{ if $j\neq (n+1)-i$ .}\end{array}\right.$$

Note that $e_{1}^{n}$ is the minimal positive element of $\mathbb{Z}^{n}.$ An $n$-tuple $(t_{1},\ldots,t_{n})\in K^{n}$ is called a \textit{parametrical system of} $v$ if $v(t_{i})=e_{i}^{n}$ for all $1\leq i\leq n.$ For a $\mathbb{Z}$-valuation $v$ we call $t\in K$ a \textit{uniformizer of} $v$ if $v(t)=1.$

\begin{df}
Let $n\in\mathbb{N}.$ Let $v$ be a $\mathbb{Z}^{n}$-valuation on $K$ and let $\mathcal{O}'$ be a coarsening of $\mathcal{O}_{v}.$ By \Cref{coarsening} there exists a unique $r\leq n,$ such that $\mathcal{O}'=\mathcal{O}_{\pi_{r}\circ v}.$ Let $v'=\pi_{r}\circ v.$ Let $x,y\in\mathcal{O}_{v'}$ be such that $x-y\in\mathfrak{m}_{v'}$ and $x,y\notin\mathfrak{m}_{v'}.$ Since $y\in\mathcal{O}_{v'}^{\times}$ and $\mathfrak{m}_{v'}\subseteq \mathfrak{m}_{v},$ we have that $(x-y)y^{-1}=x/y-1\in\mathfrak{m}_{v},$ and hence $v(x)=v(y).$ We denote by $\overline{v}:\kappa_{v'}\to\mathbb{Z}^{n-r}\cup\{\infty\},\overline{x}\mapsto (\pi^{n-r}\circ v)(x),$ where $x$ is any lifting of $\overline{x},$ the \textit{residual valuation of} $v$ \textit{modulo} $v',$ which is well-defined by the above.  

Let $r,d\in\mathbb{N}.$ On the other hand, let $v'$ be a $\mathbb{Z}^{r}$-valuation on $K$ and let $\overline{v}$ be a $\mathbb{Z}^{d}$-valuation on $\kappa_{v'}.$ Let $(t_{1},\ldots,t_{r})\in K^{r}$ be a parametrical system of $v'.$ For $a\in K^{\times},$ there exists $a_{1},\ldots,a_{r}\in\mathbb{Z}$ such that $a=t_{1}^{a_{1}}\ldots t_{r}^{a_{r}}u,$ for some $u\in \mathcal{O}_{v'}^{\times}.$ We can define a valuation $v:K\to \mathbb{Z}^{r}\times\mathbb{Z}^{d}\cup\{\infty\},$ by $v(a)=(v'(a),\overline{v}(\overline{u})).$ The valuation $v$ is called \textit{the composition of} $v'$ and $\overline{v}$ with respect to $(t_{1},\ldots,t_{r}).$ Note that $\mathcal{O}_{v}$ is a refinement of $\mathcal{O}_{v'}.$
\end{df}

\begin{ex} Let $n\in\mathbb{N}^{+}.$ We consider $K_{n}=k(\!(t_{1})\!)\ldots(\!(t_{n})\!)$ the field of iterated Laurent series over a field $k.$ By induction on $n,$ we show that there exists a $\mathbb{Z}^{n}$-valuation $v$ on $K_{n}$ such that $(t_{1},\ldots,t_{n})\in K_{n}^{n}$ is a parametrical system of $v.$ For $n=1,$ the valuation $v_{t_{1}}$ on $K_{1},$ given by $v_{t_{1}}(\displaystyle\sum_{i=m}^{\infty}a_{i}t_{1}^{i})=m,$ when $a_{m}\in k^{\times},$ is a $\mathbb{Z}$-valuation on $K_{1}$ such that $t_{1}$ is a parametrical system $v_{t_{1}}.$ Let $n>1.$ Let $v_{t_{n}}$ be the $t_{n}$-adic valuation on $K_{n},$ that is, the $\mathbb{Z}$-valuation corresponding to the valuation ring $K_{n-1}[\![t_{n}]\!].$ By the induction hypothesis, since $\kappa_{v_{t_{n}}}=K_{n-1},$ we may consider a $\mathbb{Z}^{n-1}$-valuation $w$ on $K_{n-1}$ such that $(t_{1},\ldots,t_{n-1})\in \kappa_{v_{t_{n}}}^{n-1}$ is a parametrical system of $w.$ Let $v$ be the composition of $v_{t_{n}}$ and $w$ with respect to $t_{n}.$ Hence $v$ is a $\mathbb{Z}^{n}$-valuation on $K_{n},$ and since $v(t_{n})=(v_{t_{n}}(t_{n}),w(\overline{1}))=(1,0,\ldots,0)=e_{n}^{n},$ we have that $(t_{1},\ldots,t_{n})\in K_{n}^{n}$ is a parametrical system of $v.$ Note that different choices of uniformizers of $v_{t_{n}}$ can lead to distinct $\mathbb{Z}^{n}$-valuations on $K_{n}$ (e.g. $t_{n}t_{1}$ instead of $t_{n}$).
\end{ex}

\begin{rem}\label{inversevaluations} Let $r\in\mathbb{N}.$ Fixing a $\mathbb{Z}^{r}$-valuation $v'$ on $K,$ a parametrical system $(t_{1},\ldots,t_{r})\in K^{r}$ for $v',$ and $d\in\mathbb{N},$ the above defined assignations define mutually inverse bijections between equivalence classes of $\mathbb{Z}^{r+d}$-valuations $v,$ where $\mathcal{O}_{v}$ are refinements of $\mathcal{O}_{v'}$ and equivalence classes of $\mathbb{Z}^{d}$-valuations $\overline{v}$ on $\kappa_{v},$ such that $\kappa_{v}=\kappa_{\overline{v}}.$ See \cite[Section 2.3]{EP}.

\end{rem}

We call $W\subseteq \Omega(K)\smallsetminus\Omega_{0}(K)$ \textit{saturated} if for all $\mathcal{O}\in W$ we have $\mathcal{O}'\in W$ for every nontrivial coarsening $\mathcal{O}'$ of $\mathcal{O}.$ Let $S,S'\subseteq V(K).$ We say that $S$ and $S'$ are \textit{equivalent} if $\{\mathcal{O}_{v}\mid v\in S\}=\{\mathcal{O}_{v}\mid v\in S'\}.$ We say that $S\subseteq V(K)\smallsetminus V_{0}(K)$ is \textit{coherent} if 
\begin{itemize}
\item $S$ is a set of pairwise non-equivalent valuations, 
\item and, if $v\in S$ is a $\mathbb{Z}^{n}$-valuation, then $\pi_{i}\circ v\in S,$ for all $1\leq i\leq n.$
\end{itemize}

\begin{prop}\label{coherentset} Let $S\subseteq V(K)\smallsetminus V_{0}(K)$ be such that $W=\{\mathcal{O}_{v}\mid v\in S\}$ is finite and saturated. Then there exists a coherent set $S'\subseteq V(K)$ equivalent to $S.$  
\end{prop}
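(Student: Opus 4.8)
The plan is to select, for every valuation ring $\mathcal{O}\in W$, one valuation $v_{\mathcal{O}}$ with $\mathcal{O}_{v_{\mathcal{O}}}=\mathcal{O}$, making the choices by induction on $\dim\mathcal{O}$ in such a way that $\pi_i\circ v_{\mathcal{O}}=v_{\mathcal{O}^{(i)}}$ for every $i<\dim\mathcal{O}$, where $\mathcal{O}^{(i)}$ denotes the coarsening of $\mathcal{O}$ of Krull dimension $i$. The set $S'=\{v_{\mathcal{O}}\mid\mathcal{O}\in W\}$ will then be the required coherent set equivalent to $S$.

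First I would record the shape of $W$. Every $\mathcal{O}\in W$ lies in $\Omega_n(K)$ for $n=\dim\mathcal{O}\ge 1$, so $\mathcal{O}=\mathcal{O}_v$ for some $\mathbb{Z}^n$-valuation $v$, and by \Cref{coarsening} the coarsenings of $\mathcal{O}$ are exactly $K$ together with the $\mathcal{O}_{\pi_r\circ v}$ for $1\le r\le n$; these form a chain $\mathcal{O}=\mathcal{O}_{\pi_n\circ v}\subsetneq\mathcal{O}_{\pi_{n-1}\circ v}\subsetneq\cdots\subsetneq\mathcal{O}_{\pi_1\circ v}$ with $\dim\mathcal{O}_{\pi_r\circ v}=r$, so there is exactly one coarsening of each Krull dimension. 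Since $W$ is saturated, $\mathcal{O}^{(1)},\dots,\mathcal{O}^{(n-1)}$ all lie in $W$; in particular, when $n\ge 2$ the ring $\mathcal{O}$ has in $W$ a unique coarsening $\mathcal{O}'=\mathcal{O}^{(n-1)}$ of Krull dimension $n-1$, and for $i<n$ the coarsening of $\mathcal{O}$ of Krull dimension $i$ equals that of $\mathcal{O}'$.

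Now the induction. If $\dim\mathcal{O}=1$, let $v_{\mathcal{O}}$ be the unique $\mathbb{Z}$-valuation with valuation ring $\mathcal{O}$ (it is unique because the identity is the only order-automorphism of $\mathbb{Z}$), and there is nothing further to verify at this stage. For the step, let $\dim\mathcal{O}=n\ge 2$, put $\mathcal{O}'=\mathcal{O}^{(n-1)}\in W$, and assume $v_{\mathcal{O}'}$, a $\mathbb{Z}^{n-1}$-valuation, has already been chosen with $\pi_i\circ v_{\mathcal{O}'}=v_{\mathcal{O}^{(i)}}$ for all $i<n-1$. Fix a parametrical system $(t_1,\dots,t_{n-1})$ of $v_{\mathcal{O}'}$. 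Since $\mathcal{O}$ is a refinement of $\mathcal{O}'=\mathcal{O}_{v_{\mathcal{O}'}}$ and $\mathcal{O}\in\Omega_n(K)$, \Cref{inversevaluations} (applied with $r=n-1$ and $d=1$) associates to $\mathcal{O}$ an equivalence class of $\mathbb{Z}$-valuations on $\kappa_{v_{\mathcal{O}'}}$; choosing a representative $\overline{v}$, I let $v_{\mathcal{O}}$ be the composition of $v_{\mathcal{O}'}$ and $\overline{v}$ with respect to $(t_1,\dots,t_{n-1})$. By \Cref{inversevaluations} this $v_{\mathcal{O}}$ is a $\mathbb{Z}^n$-valuation in that equivalence class, so $\mathcal{O}_{v_{\mathcal{O}}}=\mathcal{O}$; and from the defining formula $v_{\mathcal{O}}(a)=(v_{\mathcal{O}'}(a),\overline{v}(\overline{u}))$ one reads off $\pi_{n-1}\circ v_{\mathcal{O}}=v_{\mathcal{O}'}$. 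Hence for $i<n$ we have $\pi_i\circ v_{\mathcal{O}}=\pi_i\circ\pi_{n-1}\circ v_{\mathcal{O}}=\pi_i\circ v_{\mathcal{O}'}=v_{\mathcal{O}^{(i)}}$, using $\pi_i\circ\pi_{n-1}=\pi_i$, the inductive hypothesis for $v_{\mathcal{O}'}$, and the fact that $\mathcal{O}$ and $\mathcal{O}'$ have the same coarsening of Krull dimension $i$; for $i=n$ the identity $\pi_n\circ v_{\mathcal{O}}=v_{\mathcal{O}}$ is trivial.

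Finally, $S'=\{v_{\mathcal{O}}\mid\mathcal{O}\in W\}$ works: distinct rings in $W$ are non-equivalent, so $S'$ is a set of pairwise non-equivalent valuations; if $v_{\mathcal{O}}\in S'$ is a $\mathbb{Z}^n$-valuation then $\pi_i\circ v_{\mathcal{O}}=v_{\mathcal{O}^{(i)}}\in S'$ for all $1\le i\le n$, so $S'$ is coherent; and $\{\mathcal{O}_v\mid v\in S'\}=W=\{\mathcal{O}_v\mid v\in S\}$, so $S'$ is equivalent to $S$. The one genuinely load-bearing point is that $\pi_{n-1}\circ v_{\mathcal{O}}=v_{\mathcal{O}'}$ holds exactly and not merely up to equivalence: this is precisely why $v_{\mathcal{O}}$ must be built as the \emph{composition} of $v_{\mathcal{O}'}$ and $\overline{v}$ along a parametrical system of $v_{\mathcal{O}'}$ rather than as an arbitrary valuation with valuation ring $\mathcal{O}$, so that the choices already made on all lower coarsenings are inherited without any compatibility obstruction. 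Finiteness of $W$ plays no role in the argument beyond ensuring the induction on dimension terminates.
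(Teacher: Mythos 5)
Your proof is correct and takes essentially the same route as the paper's: both hinge on \Cref{inversevaluations}, realizing each higher-rank valuation ring as the composition of the already-chosen valuation on its rank-$(n-1)$ coarsening with the corresponding residual valuation along a parametrical system, so that $\pi_{n-1}\circ v_{\mathcal{O}}=v_{\mathcal{O}'}$ holds on the nose and coherence follows. The only difference is bookkeeping: you recurse ring-by-ring on Krull dimension (and rightly note that finiteness of $W$ is not really needed), whereas the paper inducts on the maximal rank occurring in $S$ and replaces the top-rank valuations all at once relative to an already coherent lower-rank set.
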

\begin{proof} Let $n=\max\{\dim \mathcal{O}_{v}\mid v\in S\}.$ We prove the statement by induction on $n.$ If $n=1,$ there is nothing to show, since every subset of $V_{1}(K)$ is a set of pairwise of non-equivalent $\mathbb{Z}$-valuations, which is trivially coherent. Assume now that $n>1.$ By the induction hypothesis, we have that for any $S\subseteq \bigcup_{1\leq i\leq n-1}V_{i}(K)$ such that $W=\{\mathcal{O}_{v}\mid v\in S\}$ is finite and saturated, there exists a coherent set $S'\subseteq V(K)$ with $W=\{\mathcal{O}_{v}\mid v\in S'\}.$ Let $S\subseteq \bigcup_{1\leq i\leq n}V_{i}(K)$ be such that $W=\{\mathcal{O}_{v}\mid v\in S\}$ is finite and saturated. Without loss of generality, we may assume that $S$ consist of a set of non-equivalent valuations. Let $S_{n-1}=S\smallsetminus V_{n}(K).$ Then, by the induction hypothesis, $S_{n-1}$ is equivalent to a coherent subset $S_{n-1}'$ of $\bigcup_{1\leq i\leq n-1}V_{i}(K).$ Let $v\in S_{n-1}'.$ Let $V_{v}=\{w\in S\mid \mathcal{O}_{w}\subsetneqq \mathcal{O}_{v}\}.$ Let $(t_{1},\ldots,t_{n-1})\in K^{n-1}$ be a parametrical system of $v.$ For $w\in V_{v},$ let $\overline{w}$ be the residual valuation of $w$ modulo $v.$  Let $w'$ be the composition of $v$ and $\overline{w},$ with respect to $(t_{1},\ldots,t_{n-1}).$ Thus, for every $w\in V_{v},$ we can define a $\mathbb{Z}^{n}$-valuation $w'$ on $K,$ which is equivalent to $w,$ by Remark \ref{inversevaluations}. Let $S(v)=\{w'\mid w\in V_{v}\}.$ Then $S(v)\cup S_{n-1}'$ is a coherent set equivalent to $V_{v}\cup S_{n-1}'.$ We obtain that $$S'=\displaystyle\bigcup_{v\in S_{n-1}'\cap V_{n-1}(K)}S(v)\cup S_{n-1}',$$ is a coherent finite set equivalent to $S.$ 
\end{proof}


Let $v,w\in V(K).$ We call $(\gamma_{v},\gamma_{w})\in\Gamma_{v}\times\Gamma_{w}$ \textit{compatible with respect to} $(v,w)$ if $\pi_{r}(\gamma_{v})=\pi_{r}(\gamma_{w}),$ where $r\in\mathbb{N}$ is such that $\mathcal{O}_{v}\mathcal{O}_{w}\in\Omega_{r}(K).$ 

Let $n\in\mathbb{N}.$ We recall that $\pi^{1}$ is the projection on the last component of $\mathbb{Z}^{n}.$ Let $S\subseteq V(K)\smallsetminus V_{0}(K)$ be a finite coherent set. We define the group homomorphism
\begin{align}\label{funcionhaciaZ}
\Phi_{S}:K^{\times}&\to \prod_{v\in S}\mathbb{Z},a\mapsto (\pi^{1}(v(a)))_{v\in S}.
\end{align}
 
\begin{prop}\label{surjectivity} Let $K$ be a field. Let $S\subseteq V(K)\smallsetminus V_{0}(K)$ be a finite coherent set. Then $\Phi_{S}$ is surjective. 
\end{prop}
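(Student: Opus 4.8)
The plan is to prove the statement by induction on $|S|$, understanding it as quantified over all base fields; the case $S=\emptyset$ is trivial, so assume $|S|\geq 1$. The key structural observation is that coherence makes $S$ a forest. Let $w_1,\dots,w_r$ be the $\mathbb{Z}$-valuations contained in $S$; this set is nonempty, since for any $v\in S$ the valuation $\pi_1\circ v$ lies in $S$ (by coherence) and has rank $1$. By \Cref{coarsening} this $\pi_1\circ v$ is the \emph{unique} rank-$1$ coarsening of $v$, so — $S$ consisting of pairwise non-equivalent valuations — it is literally one of the $w_i$; moreover the rings $\mathcal{O}_{\pi_j\circ v}$ for $1\leq j\leq \rk v$ form a chain with top $\mathcal{O}_{w_i}$. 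Hence $S=S_1\sqcup\dots\sqcup S_r$ with $S_i=\{v\in S\mid \mathcal{O}_v\subseteq\mathcal{O}_{w_i}\}$, and each $S_i$ is again coherent. For fixed $i$, the assignment $v\mapsto\overline{v}$ sending $v\in S_i\setminus\{w_i\}$ to its residual valuation modulo $w_i$ is a bijection onto a set $\overline{S_i}\subseteq V(\kappa_{w_i})\setminus V_0(\kappa_{w_i})$ (injectivity and the bijection with refinements being \Cref{inversevaluations}); one checks $\overline{S_i}$ is again coherent, the only point needing care being that the residual valuation of $\pi_j\circ v$ modulo $w_i$ equals $\pi_{j-1}\circ\overline{v}$, which is immediate from the definition. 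Since $|\overline{S_i}|=|S_i|-1<|S|$, the induction hypothesis, applied over $\kappa_{w_i}$, gives that $\Phi_{\overline{S_i}}$ is surjective.

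Because $S$ is finite, it now suffices to produce, for each fixed $v_0\in S$, an element $a\in K^{\times}$ with $\Phi_S(a)=\varepsilon_{v_0}:=(\delta_{v,v_0})_{v\in S}$. Say $v_0\in S_{i_0}$ and fix a uniformizer $t_0$ of $w_{i_0}$. I would first realize the correct values on the single branch $S_{i_0}$: a direct computation shows that if $w_{i_0}(a)=m$ and $a=t_0^{m}u$ with $u\in\mathcal{O}_{w_{i_0}}^{\times}$, then for $v\in S_{i_0}\setminus\{w_{i_0}\}$ one has $v(a)=\bigl(m,\, m\,\delta_v+\overline{v}(\overline{u})\bigr)$ with $\delta_v:=\pi^{\rk v-1}(v(t_0))$, so $\pi^1(v(a))=m\,\pi^1(\delta_v)+\Phi_{\overline{S_{i_0}}}(\overline{u})_{\overline{v}}$, while $\pi^1(w_{i_0}(a))=m$. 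If $v_0=w_{i_0}$, take $m=1$ and use surjectivity of $\Phi_{\overline{S_{i_0}}}$ to pick $u$ with $\Phi_{\overline{S_{i_0}}}(\overline{u})_{\overline{v}}=-\pi^1(\delta_v)$ for all $v$; if $v_0\neq w_{i_0}$, take $m=0$ (so $a=u\in\mathcal{O}_{w_{i_0}}^{\times}$) and pick $\overline{u}$ with $\Phi_{\overline{S_{i_0}}}(\overline{u})=(\delta_{\overline{v},\overline{v_0}})_{\overline{v}}$. In either case this yields $b_0\in K^{\times}$ with $\pi^1(v(b_0))=\delta_{v,v_0}$ for all $v\in S_{i_0}$.

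Next I would glue the branches together using the approximation theorem for the pairwise independent valuations $w_1,\dots,w_r$ (distinct $\mathbb{Z}$-valuations are independent, having no common nontrivial coarsening): it provides $a\in K^{\times}$ with $w_{i_0}(a-b_0)>w_{i_0}(b_0)$ and $w_j(a-1)>0$ for all $j\neq i_0$. Since $\mathcal{O}_v\subseteq\mathcal{O}_{w_i}$ forces $\mathfrak{m}_{w_i}\subseteq\mathfrak{m}_v$ (if $w_i(x)>0$ then the leading coordinate of $v(x)$ is positive, so $v(x)>0$), the first condition yields $a/b_0\in 1+\mathfrak{m}_v$, hence $v(a)=v(b_0)$, for every $v\in S_{i_0}$, while the remaining conditions yield $a\in 1+\mathfrak{m}_v$, hence $v(a)=0$, for every $v$ in any other branch $S_j$. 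Thus $\pi^1(v(a))=\delta_{v,v_0}$ for every $v\in S$, i.e.\ $\Phi_S(a)=\varepsilon_{v_0}$, which closes the induction.

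The main obstacle I anticipate is the combinatorial bookkeeping of the forest structure — showing that each $S_i$ and, crucially, each $\overline{S_i}$ is coherent (so the induction hypothesis is available over the residue fields $\kappa_{w_i}$) and that passage to residual valuations commutes with coarsening — together with handling the cross term $m\,\pi^1(\delta_v)$; this term is benign only because it appears precisely when $m\neq 0$, i.e.\ when $v_0=w_{i_0}$, in which case it is absorbed using that $\Phi_{\overline{S_{i_0}}}$ is surjective. The analytic ingredient, the approximation theorem for independent valuations \cite[Theorem~2.4.1]{EP}, is entirely standard.
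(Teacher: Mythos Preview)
Your argument is correct, and it takes a genuinely different route from the paper's proof. The paper proceeds non-inductively: for each $v\in S$ it prescribes a tuple of values $(\gamma_w)_{w\in S}$ in $\prod_{w\in S}\Gamma_w$ (namely $\gamma_w=e_{\rk w-\rk v+1}^{\rk w}$ when $\mathcal{O}_w\subseteq\mathcal{O}_v$, and $\gamma_w=0$ otherwise), verifies that any two of these are compatible in the sense required by Ribenboim's approximation theorem \cite[Theorem~5]{Rib57}, and then invokes that theorem to produce $x_v\in K^{\times}$ with $w(x_v)=\gamma_w$ for all $w\in S$, whence $\Phi_S(x_v)=e_v$. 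Your approach replaces this single appeal to the strong approximation theorem for dependent valuations by an induction on $|S|$ together with the weaker, entirely standard approximation theorem for pairwise independent valuations applied only at the rank-$1$ level. What you gain is a more elementary toolkit (no need for Ribenboim's result) and a transparent recursive picture of the forest structure of a coherent set; what you pay is the bookkeeping you already flagged, in particular checking that each $\overline{S_i}$ is again coherent and that residual valuations commute with the coarsenings $\pi_j$. The paper's direct argument, by contrast, hides this recursion inside the compatibility verification and dispatches everything in one stroke.
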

\begin{proof} Let $(e_{v})_{v\in S}$ be the canonical basis of $\prod_{v\in S}\mathbb{Z}$ as a $\mathbb{Z}$-module. For every $v\in S,$ we show that there exists $x_{v}\in K^{\times}$ such that $\Phi_{S}(x_{v})=e_{v}.$ Consider $v\in S$ and let $\gamma_{v}=e_{1}^{\mathsf{rk}(v)}.$ For $w\in S$ a refinement of $v,$ let $\gamma_{w}=e_{\mathsf{rk}(w)-\mathsf{rk}(v)+1}^{\mathsf{rk}(w)}$ in $\Gamma_{w},$ otherwise $\gamma_{w}=0.$ We claim that for every $w,w'\in S,$ the pair $(\gamma_{w},\gamma_{w'})$ in $\Gamma_{w}\times \Gamma_{w'},$ is compatible with respect to $(w,w')$. Let $w,w'\in S.$ If $w,w'$ are both not refinements of $v,$ then $(\gamma_{w},\gamma_{w'})$ are trivially compatible. Assume $w$ is a refinement of $v$ and $w'$ is not a refinement of $v.$ It follows from \Cref{coarsening} that $\mathcal{O}_{w}\subseteq\mathcal{O}_{v}\subsetneqq\mathcal{O}_{w}\mathcal{O}_{w'}\subseteq K,$ because every valuation ring has a unique coarsening of a fixed rank. Let $d\in\mathbb{N}$ be such that $\mathcal{O}_{w}\mathcal{O}_{w'}\in\Omega_{d}(K).$ Then $d<\mathsf{rk}(v)\leq \mathsf{rk}(w)$ and hence $\pi_{d}(\gamma_{w})=0=\pi_{d}(\gamma_{w'})$ in $\mathbb{Z}^{d}.$ Finally, we assume that $w$ and $w'$ are refinements of $v.$ Let $d\in\mathbb{N}$ be such that $\mathcal{O}_{w}\mathcal{O}_{w'}\in\Omega_{d}(K).$ Then $\mathcal{O}_{w}\mathcal{O}_{w'}\subseteq \mathcal{O}_{v}$ and hence we have that $\mathsf{rk}(v)\leq d< \mathsf{rk}(w),\mathsf{rk}(v)\leq d< \mathsf{rk}(w').$ Thus, we have $\pi_{d}(\gamma_{w})=\pi_{d}(\gamma_{w'}),$ that is, $(\gamma_{w},\gamma_{w'})$ are compatible. Therefore, for every $w,w'\in S,$ the pair $(\gamma_{w},\gamma_{w'})\in\Gamma_{w}\times\Gamma_{w'}$ is compatible. By \cite[Theorem 5]{Rib57} there exists $x_{v}\in K^{\times}$ such that $w(x_{v})=\gamma_{w}$ for all $w\in W.$ Hence $\pi^{1}(v(x_{v}))=\pi^{1}(\gamma_{v})=1,$ and $\pi^{1}(w(x_{v}))=0$ for all $w\in S\setminus \{v\}.$ Therefore $\Phi_{S}(x_{v})=e_{v}.$ Since $v\in S$ was arbitrarily chosen, we conclude that $\Phi_{S}$ is surjective.  
\end{proof}

\begin{prop}\label{unicidad} Let $n\in\mathbb{N}.$ Assume that $K$ carries a henselian $\mathbb{Z}^{n}$-valuation. Then $|\Omega_{i}(K)|=1$ for all $1\leq i\leq n.$ 
\end{prop}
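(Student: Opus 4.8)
The plan is to show that any two valuation rings in $\Omega_i(K)$ coincide, using henselianity to rule out incompatible refinements. First I would recall the classical fact that a henselian valuation $v$ on $K$ is comparable to every other valuation ring of $K$ that is "non-trivial enough" — more precisely, if $K$ carries a henselian valuation $w$, then for any valuation $u$ on $K$, either $\mathcal{O}_w \subseteq \mathcal{O}_u$ or $\mathcal{O}_u \subseteq \mathcal{O}_w$, unless the residue field obstructs this; the cleanest statement I would invoke is that the henselian valuation ring $\mathcal{O}_w$ and any other henselian valuation ring of $K$ are comparable, and that more generally all the valuation rings lying between $\mathcal{O}_w$ and $K$ form a chain (these are exactly the coarsenings of $w$, and by \Cref{coarsening} they are totally ordered, being the $\mathcal{O}_{\pi_r \circ w}$ for $r \leq n$). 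So let $w$ be the given henselian $\mathbb{Z}^n$-valuation. For each $r \leq n$, $\mathcal{O}_{\pi_r \circ w} \in \Omega_r(K)$, so $\Omega_r(K) \neq \emptyset$.

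Next I would argue uniqueness. Fix $i$ with $1 \leq i \leq n$ and let $\mathcal{O} \in \Omega_i(K)$ be arbitrary, say $\mathcal{O} = \mathcal{O}_v$ for a $\mathbb{Z}^i$-valuation $v$. The key point is that the coarsest non-trivial coarsening of $w$ — call it $\mathcal{O}_1 = \mathcal{O}_{\pi_1 \circ w}$, a rank-one henselian valuation ring — must be comparable with $\mathcal{O}_v$. Indeed, a rank-one henselian valued field has the property that its valuation ring is comparable to every valuation ring of $K$ (this is where henselianity is essential: a henselian rank-one valuation is "special" in the sense of F.~K.~Schmidt / Engler–Prestel, and cannot coexist independently with another valuation). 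Hence $\mathcal{O}_1 \subseteq \mathcal{O}_v$ or $\mathcal{O}_v \subseteq \mathcal{O}_1$. If $\mathcal{O}_v \subseteq \mathcal{O}_1$ then $\mathcal{O}_v$ is a refinement of the rank-one ring $\mathcal{O}_1$, but then $v$ would have rank $\geq 1$ with $\mathcal{O}_1$ as a coarsening — fine, but now I want to push this down: actually the cleaner route is to show $\mathcal{O}_v \supseteq \mathcal{O}_w$, i.e. $\mathcal{O}_v$ is itself a coarsening of $w$, and then conclude by \Cref{coarsening} that $\mathcal{O}_v = \mathcal{O}_{\pi_i \circ w}$, giving $|\Omega_i(K)| = 1$.

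So the heart of the argument is: \emph{every $\mathcal{O} \in \Omega_n(K)$ refining nothing strictly bigger — every valuation ring of finite Krull dimension — that is comparable with the henselian $w$ must contain $\mathcal{O}_w$ or be contained in it, and the "contained in it" case is impossible because $\mathcal{O}_w$ has Krull dimension exactly $n$ which is maximal among the $\Omega_i(K)$.} Concretely: $\mathcal{O}_w$ is henselian of rank $n$; by the comparability theorem for henselian valuations (e.g. \cite[Theorem 4.4.2]{EP} or F.~K.~Schmidt's theorem, together with the observation that two independent henselian valuations would force $K$ separably closed, hence could not have value group $\mathbb{Z}^n$), any $\mathcal{O}_v \in \Omega_i(K)$ with $i \geq 1$ is comparable to $\mathcal{O}_w$. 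If $\mathcal{O}_w \subseteq \mathcal{O}_v$, then $\mathcal{O}_v$ is a coarsening of $w$, so $\mathcal{O}_v = \mathcal{O}_{\pi_r \circ w}$ for the unique $r \leq n$ by \Cref{coarsening}; since $\mathcal{O}_v$ has Krull dimension $i$, and $\mathcal{O}_{\pi_r\circ w}$ has value group $\mathbb{Z}^r$ hence Krull dimension $r$, we get $r = i$, so $\mathcal{O}_v = \mathcal{O}_{\pi_i \circ w}$ is uniquely determined. If instead $\mathcal{O}_v \subsetneq \mathcal{O}_w$, then $\mathcal{O}_v$ refines $w$; but the value group of $w$ is $\mathbb{Z}^n$, and a proper refinement would have value group an ordered abelian group admitting $\mathbb{Z}^n$ as a quotient by a nonzero convex subgroup, hence of rank $> n$ — this contradicts $i \leq n$ unless $\mathcal{O}_v = \mathcal{O}_w$; and $\mathcal{O}_v = \mathcal{O}_w$ only when $i = n$. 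In every case $\mathcal{O}_v = \mathcal{O}_{\pi_i \circ w}$.

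\textbf{Main obstacle.} The delicate point is justifying comparability: why must an arbitrary $\mathbb{Z}^i$-valuation ring of $K$ be comparable with $\mathcal{O}_w$? This is exactly the content of the theory of henselian (especially rank-one henselian, or more generally "t-henselian") valued fields: the henselian topology is unique, and a field admitting a non-trivial henselian valuation that is \emph{not} separably closed admits a \emph{finest} henselian valuation whose ring is contained in (i.e. refines) every henselian valuation ring and is comparable with every valuation ring inducing a non-trivial topology. I expect to lean on \cite[Theorem 4.4.2]{EP} (the classification of the set of henselian valuation rings as a chain) and on the fact that $K$ with a $\mathbb{Z}^n$-valuation is not separably closed (its residue field of the rank-one coarsening would have to be separably closed and value group divisible, contradicting $\mathbb{Z}$). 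Assembling these inputs carefully — and handling the case distinction between real and non-real residue fields if the henselian comparability statement being cited has such hypotheses — is where the real work lies; the reduction via \Cref{coarsening} afterwards is purely formal.
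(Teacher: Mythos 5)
Your overall skeleton (show every $\mathcal{O}\in\Omega_i(K)$ is comparable with the henselian ring $\mathcal{O}_w$, rule out proper refinement by a rank count, and identify coarsenings via \Cref{coarsening}) would work \emph{if} the comparability step were available, but that step — which you yourself flag as the main obstacle — is not closed by the results you invoke, and this is a genuine gap. The theorems you cite (F.~K.~Schmidt, \cite[Theorem 4.4.2]{EP}, the chain structure of henselian valuation rings, the canonical henselian valuation) all compare \emph{henselian} valuations with one another; an arbitrary $\mathbb{Z}^i$-valuation on $K$ is not assumed henselian, so none of these gives that it is comparable with $\mathcal{O}_w$. Moreover your stronger claim, that a rank-one henselian valuation ring is comparable to \emph{every} valuation ring of $K$, is simply false: e.g.\ on $\mathbb{Q}_p$ (or $\mathbb{Q}(\!(t)\!)$) one can extend, via Chevalley, a valuation from a subfield on which the henselian valuation is trivial (say a valuation of $\mathbb{Q}(T)$ for a transcendence basis $T$, trivial on $\mathbb{Q}$), and the resulting valuation ring is incomparable with the henselian one. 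What such examples cannot have is value group $\mathbb{Z}^i$: ruling that out genuinely uses discreteness of the value group, not just henselianity. The standard argument (this is the content of \cite[Proposition 2.2]{BGVG}) is: if a $\mathbb{Z}$-valuation $u$ were independent of a nontrivial henselian valuation, weak approximation lets you find $x\in 1+\mathfrak{m}$ for the henselian valuation with $u(x)=1$; Hensel's lemma gives $1+\mathfrak{m}\subseteq K^{\times\ell}$ for a prime $\ell$ different from the residue characteristic, so $u(x)\in\ell\mathbb{Z}$, a contradiction. Without an argument of this kind (or a citation that covers non-henselian competitors), your proof does not go through.

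For comparison, the paper avoids the comparability question altogether and argues by induction on $n$: the case $n=1$ is exactly \cite[Proposition 2.2]{BGVG}; for $n>1$ it passes to the coarsening $v_{n-1}=\pi_{n-1}\circ v$ (henselian by \cite[Corollary 4.1.4]{EP}), applies the induction hypothesis to get $|\Omega_i(K)|=1$ for $i\leq n-1$, and then uses that the residual valuation of $v$ modulo $v_{n-1}$ is a henselian $\mathbb{Z}$-valuation on $\kappa_{v_{n-1}}$, unique again by \cite[Proposition 2.2]{BGVG}, so that \Cref{inversevaluations} identifies $\Omega_n(K)=\{\mathcal{O}_v\}$. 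If you want to keep your direct route, you would have to prove the comparability of an arbitrary $\mathbb{Z}^i$-valuation with $\mathcal{O}_w$ by essentially reproving the rank-one uniqueness statement (applied to the appropriate residual valuations), at which point you have reconstructed the paper's induction in different clothing.
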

\begin{proof} We prove this by induction on $n.$ For $n=1,$ follows from \cite[Proposition 2.2]{BGVG}. We assume that $n>1.$ Let $v_{n-1}=\pi_{n-1}\circ v.$ Then $v_{n-1}$ is a henselian $\mathbb{Z}^{n-1}$-valuation on $K,$ which is a coarsening of $v,$ by \cite[Corollary 4.1.4]{EP} and \Cref{coarsening}. By the induction hypothesis $|\Omega_{i}(K)|=1,$ for all $1\leq i\leq n-1.$ Let $\overline{v}$ be the residual valuation of $v$ modulo $v_{n-1}.$ Then $\overline{v}$ is a henselian $\mathbb{Z}$-valuation on $\kappa_{v_{n-1}},$ by \cite[Corollary 4.1.4]{EP}, and it is the unique $\mathbb{Z}$-valuation on $\kappa_{v_{n-1}},$ by \cite[Proposition 2.2]{BGVG}. Therefore, by \Cref{inversevaluations}, we have that $\Omega_{n}(K)=\{\mathcal{O}_{v}\}$ and hence $|\Omega_{n}(K)|=1.$  
\end{proof}

By a function field in one variable $F/K$ we mean a finitely generated field extension of transcendence degree one. 

\begin{lem}\label{algebraicfunctionfield} Let $F/K$ be a function field in one variable and $r\in\mathbb{N}.$ Let $w$ be a valuation on $F$ such that $\kappa_{w}/\kappa_{w|_{K}}$ is transcendental. Then $\mathsf{rk}(w)=\mathsf{rk}(w|_{K})$ and $\kappa_{w}/\kappa_{w|_{K}}$ is a function field in one variable.
\end{lem}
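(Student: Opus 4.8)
The plan is to exploit the Abhyankar-type dimension inequality for the valuation $w$ extending $w|_K$ along the function field extension $F/K$. Recall that for any extension of valued fields $(F,w)/(K,w|_K)$ with $F/K$ of transcendence degree one, one has the fundamental inequality
\[
\mathrm{trdeg}(\kappa_w/\kappa_{w|_K}) + \dim_{\mathbb{Q}}\bigl((\Gamma_w/\Gamma_{w|_K})\otimes_{\mathbb{Z}}\mathbb{Q}\bigr)\;\leq\;\mathrm{trdeg}(F/K)=1.
\]
First I would invoke the hypothesis that $\kappa_w/\kappa_{w|_K}$ is transcendental: this forces $\mathrm{trdeg}(\kappa_w/\kappa_{w|_K})=1$, and then the displayed inequality forces $\dim_{\mathbb{Q}}\bigl((\Gamma_w/\Gamma_{w|_K})\otimes\mathbb{Q}\bigr)=0$, i.e. $\Gamma_w/\Gamma_{w|_K}$ is a torsion group. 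Since $\Gamma_{w|_K}$ is a convex subgroup of the totally ordered group $\Gamma_w$ and the quotient is torsion, the quotient must in fact be trivial (a nonzero totally ordered torsion-free-by-nothing quotient of an ordered group is impossible; more directly, a torsion quotient $\Gamma_w/\Gamma_{w|_K}$ that is also an ordered abelian group must vanish). Hence $\Gamma_w=\Gamma_{w|_K}$, so in particular the number of proper convex subgroups agrees, giving $\mathsf{rk}(w)=\mathsf{rk}(w|_K)$.

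For the second assertion, that $\kappa_w/\kappa_{w|_K}$ is a function field in one variable, I would argue that finite generation is inherited. Choose a transcendence basis, say a single element $\bar t\in\kappa_w$ transcendental over $\kappa_{w|_K}$, and lift it to some $t\in\mathcal{O}_w$. Then $F/K(t)$ is a finite extension (both have transcendence degree one over $K$ and $F/K$ is finitely generated, hence $F$ is finite over the rational function field $K(t)$). Now $w$ restricted to $K(t)$ has residue field containing $\kappa_{w|_K}(\bar t)$, and since residue field degree is bounded by the field degree in a finite extension — here using that $\Gamma_w=\Gamma_{w|_K}$ so there is no ramification contribution — the extension $\kappa_w/\kappa_{w|_K}(\bar t)$ is finite. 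It remains to check $\kappa_{w|_{K(t)}}=\kappa_{w|_K}(\bar t)$, i.e. that the residue field of the Gauss-type restriction is exactly the rational function field; this follows because $\mathcal{O}_{w|_K}[t]\subseteq\mathcal{O}_{w|_{K(t)}}$ surjects onto $\kappa_{w|_K}[\bar t]$, and one checks no further elements of $K(t)$ reduce into $\kappa_w$ beyond $\kappa_{w|_K}(\bar t)$ using that $\bar t$ is transcendental. Thus $\kappa_w$ is finite over $\kappa_{w|_K}(\bar t)$, hence a finitely generated field extension of $\kappa_{w|_K}$ of transcendence degree one, as required.

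The main obstacle I anticipate is the second half: controlling the residue field of $w|_{K(t)}$ precisely. The inequality $[\kappa_w:\kappa_{w|_K}(\bar t)]<\infty$ is not quite automatic from $F/K(t)$ being finite unless one knows the relevant residue degree is finite, which in turn rests on the equal-value-group conclusion $\Gamma_w=\Gamma_{w|_K}$ (so that all of the finite "defect $\times$ ramification $\times$ residue degree" budget goes into residue degree, keeping it finite). A clean way to package this is: the valuation ring $\mathcal{O}_w$ lies over $\mathcal{O}_{w|_K}$, and $\mathcal{O}_w/(\mathcal{O}_w\cap$ any valuation ideal$)$ behaves well under finite extensions by the standard theory of extensions of valuations to finite extensions (e.g. \cite[Chapter 3]{EP}). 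I would therefore structure the proof as: (i) dimension inequality $\Rightarrow$ $\Gamma_w/\Gamma_{w|_K}$ torsion $\Rightarrow$ trivial $\Rightarrow$ rank equality; (ii) pick and lift a transcendence basis element $t$; (iii) reduce to the finite extension $F/K(t)$ and cite the finiteness of residue degree there; (iv) identify $\kappa_{w|_{K(t)}}$ with $\kappa_{w|_K}(\bar t)$ and conclude finite generation of $\kappa_w/\kappa_{w|_K}$ with transcendence degree one.
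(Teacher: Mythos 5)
Your second half is essentially the paper's argument: lift a residue element $\bar t$ transcendental over $\kappa_{w|_K}$ to a unit $t\in\mathcal{O}_w$, observe that $w|_{K(t)}$ is the Gauss extension of $w|_K$ with respect to $t$ (so its residue field is exactly $\kappa_{w|_K}(\bar t)$), and use finiteness of $F/K(t)$ to get $[\kappa_w:\kappa_{w|_K}(\bar t)]<\infty$. That part is sound, and note that the finiteness of the residue degree over $K(t)$ needs no hypothesis on value groups at all: in any finite extension of valued fields the residue degree is bounded by the field degree (fundamental inequality), so your worry that this "rests on $\Gamma_w=\Gamma_{w|_K}$" is unfounded.

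The genuine gap is in part (i). From the Abhyankar inequality you correctly get that $\Gamma_w/\Gamma_{w|_K}$ is torsion, but your deduction that it is therefore trivial is false: $\Gamma_{w|_K}$ need not be convex in $\Gamma_w$, and residually transcendental extensions can be ramified. Concretely, let $v$ be a $\mathbb{Z}$-valuation on $K$ with uniformizer $s$, and on $F=K(x)$ take the monomial valuation $w\bigl(\sum a_ix^i\bigr)=\min_i\bigl(v(a_i)+\tfrac{i}{2}\bigr)$; then $\Gamma_w=\tfrac{1}{2}\mathbb{Z}\supsetneq\mathbb{Z}=\Gamma_v$ (so $\mathbb{Z}$ is not convex in $\Gamma_w$ and the torsion quotient is $\mathbb{Z}/2\mathbb{Z}\neq 0$), while $\kappa_w=\kappa_v\bigl(\overline{x^2/s}\bigr)$ is transcendental over $\kappa_v$. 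So the conclusion $\Gamma_w=\Gamma_{w|_K}$ is simply wrong in general. Fortunately the statement you actually need survives: if $\Gamma_w/\Gamma_{w|_K}$ is torsion, then $\Gamma_w$ and $\Gamma_{w|_K}$ have the same divisible hull, and intersection gives a bijection between the convex subgroups of $\Gamma_w$ and those of $\Gamma_{w|_K}$, whence $\mathsf{rk}(w)=\mathsf{rk}(w|_K)$; alternatively one can simply cite the rank/rational-rank results for residually transcendental extensions as the paper does (\cite[Proposition 3.4.1, Theorem 3.4.3]{EP}). With that repair, and dropping the unnecessary appeal to $\Gamma_w=\Gamma_{w|_K}$ in step (iii), your proof goes through along the same lines as the paper's.
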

\begin{proof} Let $v=w|_{K}.$ It follows from \cite[Proposition 3.4.1, Theorem 3.4.3]{EP} that $\mathsf{rk}(w)=\mathsf{rk}(v).$ We claim that $\kappa_{w}/\kappa_{v}$ is a function field in one variable. Let $\alpha\in \kappa_{w}$ be a transcendental element over $\kappa_{v}$ and let $\theta\in\mathcal{O}^{\times}$ be such that $\overline{\theta}=\alpha.$ It follows by \cite[Theorem 3.2.4]{EP} that $K(\theta)/K$ is transcendental. Let $w':=w|_{K(\theta)}.$ It follows from \cite[Corollary 2.2.2]{EP} that $w'$ is the Gauss extension of $v$ with respect to $\theta.$ In particular $\kappa_{w'}=\kappa_{v}(\alpha).$ Since $F/K$ is a function field in one variable, we have that $F/K(\theta)$ is finite, then $\kappa_{w}/\kappa_{v}(\alpha)$ is a finite field extension and hence $\kappa_{w}/\kappa_{v}$ is a function field in one variable. 
\end{proof}

We fix a valued field $(K,v)$ and $F/K$ a function field in one variable. An extension $w$ of $v$ to $F$ is called \emph{residually transcendental} if $\kappa_{w}/\kappa_{v}$ is transcendental. We say that $F/K$ is \emph{ruled} if $F$ is a rational function field over some finite extension of $K.$ 

\begin{prop}\label{Ohm} Let $v$ be a valuation on $K.$ Let $F/K$ be a ruled extension. Let $w$ be a residually transcendental extension of $v.$ Then $\kappa_{w}/\kappa_{v}$ is ruled.	
\end{prop}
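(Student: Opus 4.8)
\textbf{Proof plan for Proposition \ref{Ohm}.}

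The plan is to reduce to the classical case of a residually transcendental extension of a valuation on a rational function field, which is governed by Ohm's work on the ruled residue problem. First I would unravel the definitions: by hypothesis $F = L(X)$ for some finite extension $L/K$, and $w$ is a residually transcendental extension of $v$ to $F$. I would first replace $v$ by an extension $u$ of $v$ to $L$ with $u = w|_L$; since $L/K$ is finite, $u$ exists, $\kappa_u/\kappa_v$ is a finite (hence algebraic) field extension, and $\mathsf{rk}(u) = \mathsf{rk}(v)$ by the standard theory of extensions of valuations to finite extensions \cite[Theorem 3.3.3]{EP}. In particular it suffices to prove that $\kappa_w/\kappa_u$ is ruled, because a rational function field over a finite extension of $\kappa_u$ is also a rational function field over a finite extension of $\kappa_v$ (compose the finite extensions), so $\kappa_w/\kappa_v$ is ruled as soon as $\kappa_w/\kappa_u$ is.

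Thus I am reduced to the situation $F = L(X)$, $w$ a residually transcendental extension of a valuation $u$ on $L$ to $L(X)$. This is exactly the setting of Ohm's theorem on residually transcendental extensions: by \cite[Corollary 2.2.2]{EP} (or the original results of Ohm, ``The ruled residue theorem for simple transcendental extensions of valued fields''), a residually transcendental extension $w$ of $u$ to $L(X)$ has the form of a ``monomial'' or Gauss-type valuation after a change of variable — more precisely, there is $a \in L$ and $d \in L^\times$ such that, writing $Y = (X-a)/d$, the valuation $w$ restricted to $L(Y)$ is the Gauss valuation (so $\kappa_{w|_{L(Y)}} = \kappa_u(\bar Y)$ is rational over $\kappa_u$), and $L(X) = L(Y)$. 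Hence $\kappa_w$ contains $\kappa_u(\bar Y)$ and is a finite extension of it (since $[\kappa_w : \kappa_{w|_{L(Y)}}]$ is bounded by the residue degree, which is finite here as $w$ is residually transcendental of the Gauss type after the substitution and $L(X)=L(Y)$). Therefore $\kappa_w$ is a finite extension of the rational function field $\kappa_u(\bar Y)$, i.e. $\kappa_w/\kappa_u$ is ruled, which is what we wanted.

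The step I expect to be the genuine mathematical content — rather than bookkeeping — is the appeal to the structure theorem for residually transcendental extensions of a valuation to a simple transcendental extension: the fact that after an affine change of variable $w$ becomes a Gauss valuation with rational residue field. This is Ohm's ruled residue theorem in the simple transcendental case, and it is the only non-formal input; everything else (reducing from $L(X)/K$ to $L(X)/L$, composing finite extensions, the behaviour of rank and residue degree) is routine valuation theory citable from \cite{EP}. I would therefore organize the write-up as: (i) pass to $u = w|_L$; (ii) invoke the simple-transcendental ruled residue theorem to get $\kappa_w$ finite over a rational function field over $\kappa_u$; (iii) conclude $\kappa_w/\kappa_v$ is ruled by composing the finite extension $\kappa_u/\kappa_v$.
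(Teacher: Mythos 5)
Your overall strategy (restrict $w$ to the finite extension $L$ over which $F$ is rational, apply the simple--transcendental ruled residue theorem to $L(X)/L$, then absorb the finite extension $\kappa_u/\kappa_v$) is sound, and it is essentially the route behind the paper's proof, which simply cites Ohm's theorem \cite[Theorem 3.3]{ohm}. However, the way you invoke the key input contains a genuine gap, in two places. First, the structural claim you attribute to Ohm (or to \cite[Corollary 2.2.2]{EP}) is false: a residually transcendental extension $w$ of $u$ to $L(X)$ need \emph{not} become a Gauss valuation after an affine substitution $Y=(X-a)/d$ with $a,d\in L$. In general one needs $a,d$ in a finite (even algebraic) extension of $L$ — e.g.\ the restriction to $\qq_p(X)$ of the Gauss valuation on $\qq_p(\sqrt{p})(X)$ with respect to $X/\sqrt{p}$ is residually transcendental but has value group $\tfrac12\zz$, so no affine change of variable over $\qq_p$ turns it into a Gauss valuation; also note that $L(Y)=L(X)$, so ``$w$ restricted to $L(Y)$'' is just $w$, and the residue degree you invoke is $1$, so that part of the argument does not say anything. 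The result of \cite[Corollary 2.2.2]{EP} only characterizes the Gauss extension among extensions with $w(X)=0$ and transcendental residue of $X$; it is not the structure theorem you need.

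Second, and more importantly, your final inference is a non sequitur: you conclude that $\kappa_w$ is a \emph{finite extension of} the rational function field $\kappa_u(\bar Y)$ and then write ``i.e.\ $\kappa_w/\kappa_u$ is ruled.'' Ruled means that $\kappa_w$ \emph{equals} $\kappa'(t)$ for some finite extension $\kappa'/\kappa_u$; a finite extension of a rational function field is in general not ruled (any function field of positive genus is finite over $\kappa_u(t)$). Indeed, that $\kappa_w/\kappa_u$ is a function field in one variable is the easy part (cf.\ Lemma \ref{algebraicfunctionfield}); the whole content of the ruled residue theorem is precisely the upgrade from ``finite over rational'' to ``rational over finite.'' The fix is simple: in step (ii) cite Ohm's ruled residue theorem in its correct form — for a residually transcendental extension $w$ of $u$ to $L(X)$, $\kappa_w$ is a rational function field over a finite extension $\kappa'$ of $\kappa_u$ — and then your step (iii) (composing $\kappa'/\kappa_u$ with $\kappa_u/\kappa_v$) goes through verbatim. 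With that correction your argument matches the paper, which deduces the proposition directly from \cite[Theorem 3.3]{ohm}.
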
	
\begin{proof} See \cite[Theorem 3.3]{ohm}.
\end{proof}

We define a set of equivalence classes of valuation extensions of coarsenings of $v$ to $F$ in the following way. For $i\in\mathbb{N},$ let $\Omega_{i}(F/v)$ denote the set of valuation rings $\mathcal{O}\in\Omega_{i}(F)$ such that $v_{\mathcal{O}}$ is a residually transcendental extension of a coarsening of $v.$ We set $$\Omega(F/v)=\bigcup_{i\in\mathbb{N}}\Omega_{i}(F/v).$$

\begin{prop}\label{discreteresidually} Let $n\in\mathbb{N}$ and let $v\in V_{n}(K).$ Let $F/K$ be a function field in one variable. Let $\mathcal{O}\in\Omega(F/v).$ Then $v_{\mathcal{O}}$ is a $\mathbb{Z}^{r}$-valuation, for some $r\leq n.$ 
\end{prop}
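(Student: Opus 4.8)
The plan is to deduce the statement directly from \Cref{coarsening} and \Cref{algebraicfunctionfield}. Write $w=v_{\mathcal{O}}$. Since $\mathcal{O}\in\Omega(F/v)$, we have $\mathcal{O}\in\Omega_{i}(F/v)$ for some $i\in\mathbb{N}$; in particular $w$ is a $\mathbb{Z}^{i}$-valuation on $F$, so $\mathsf{rk}(w)=i$, and $w$ is a residually transcendental extension of a coarsening $v'$ of $v$. It therefore suffices to show that $i\leq n$, and then $r:=i$ witnesses the claim.

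First I would unwind the definitions: by the definition of $\Omega(F/v)$ and of a residually transcendental extension, $w|_{K}$ is equivalent to the coarsening $v'$ of $v$, and $\kappa_{w}/\kappa_{w|_{K}}$ is transcendental. Next, since $v\in V_{n}(K)$ and $\mathcal{O}_{v'}$ is a coarsening of $\mathcal{O}_{v}$, \Cref{coarsening} gives $\mathcal{O}_{v'}=\mathcal{O}_{\pi_{s}\circ v}$ for a unique $s\leq n$; because $\pi_{s}\colon\mathbb{Z}^{n}\to\mathbb{Z}^{s}$ is a surjective homomorphism of ordered abelian groups, $\Gamma_{v'}$ is order-isomorphic to $(\mathbb{Z}^{s},\leq_{\mathsf{leq}})$, whence $\mathsf{rk}(w|_{K})=\mathsf{rk}(v')=s\leq n$.

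Finally I would apply \Cref{algebraicfunctionfield} to the valuation $w$ on the function field in one variable $F/K$: since $\kappa_{w}/\kappa_{w|_{K}}$ is transcendental, its conclusion yields $\mathsf{rk}(w)=\mathsf{rk}(w|_{K})=s$. Combining this with $\mathsf{rk}(w)=i$ gives $i=s\leq n$, as required, so $v_{\mathcal{O}}$ is a $\mathbb{Z}^{r}$-valuation with $r=i\leq n$. The argument is essentially an unwinding of definitions followed by two results already established, so I do not expect a genuine obstacle; the only point that needs a little care is to check that the ``coarsening of $v$'' occurring in the definition of $\Omega(F/v)$ coincides, up to equivalence, with $w|_{K}$, so that \Cref{algebraicfunctionfield} applies with $\kappa_{w|_{K}}$ in the role of the residue field of that coarsening.
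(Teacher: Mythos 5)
Your proof is correct and follows exactly the route the paper takes: its proof consists of citing \Cref{coarsening} and Lemma~\ref{algebraicfunctionfield}, and your argument is precisely the unwinding of those two results (the coarsening $v'$ of $v$ has rank $s\leq n$ by \Cref{coarsening}, and $\mathsf{rk}(v_{\mathcal{O}})=\mathsf{rk}(v_{\mathcal{O}}|_{K})=s$ by Lemma~\ref{algebraicfunctionfield} since the residue extension is transcendental). Your care about identifying $w|_{K}$ with the coarsening up to equivalence is exactly the right point to check, and it holds since equivalent valuations share the same valuation ring and residue field.
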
	
\begin{proof} This follows directly from \Cref{coarsening} and Lemma \ref{algebraicfunctionfield}.
\end{proof}

For $i\in\mathbb{N}^{+},$ we set $$\Omega_{i}^{*}(F/v)=\{\mathcal{O}\in\Omega_{i}(F/v)\mid\:\kappa_{\mathcal{O}}/\kappa_{\mathcal{O}\cap K}\:\text{is nonruled}\:\}.$$ We set $$\Omega(F/v)=\bigcup_{i\in\mathbb{N}}\Omega_{i}^{*}(F/v).$$

\begin{lem}\label{nonruledcontained} Let $F/K$ be a function field in one variable. Then $\Omega^{*}(F/v)$ is saturated. 
\end{lem}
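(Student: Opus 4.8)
The plan is the following. Let $\mathcal{O}\in\Omega^{*}(F/v)$, say $\mathcal{O}\in\Omega_{i}^{*}(F/v)$ with $i\geq 1$, and let $\mathcal{O}'$ be a nontrivial coarsening of $\mathcal{O}$; I want to show $\mathcal{O}'\in\Omega^{*}(F/v)$. Write $w=v_{\mathcal{O}}$, a $\mathbb{Z}^{i}$-valuation on $F$, and set $u=w|_{K}$; by the definitions of $\Omega_{i}(F/v)$ and $\Omega_{i}^{*}(F/v)$, the valuation $u$ is a coarsening of $v$ (that is, $\mathcal{O}_{u}=\mathcal{O}\cap K\supseteq\mathcal{O}_{v}$) and the residue extension $\kappa_{\mathcal{O}}/\kappa_{u}$ is transcendental and nonruled. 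Applying \Cref{coarsening} to the valued field $(F,w)$, there is a unique $j\leq i$ with $\mathcal{O}'=\mathcal{O}_{v'}$ for $v'=\pi_{j}\circ w$; since $\mathcal{O}'$ is nontrivial, $j\geq 1$, so $\mathcal{O}'\in\Omega_{j}(F)$. Put $u'=v'|_{K}$; then $\mathcal{O}_{u'}=\mathcal{O}'\cap K\supseteq\mathcal{O}\cap K=\mathcal{O}_{u}\supseteq\mathcal{O}_{v}$, so $u'$ is a coarsening of $v$. It thus remains to prove that $\kappa_{\mathcal{O}'}/\kappa_{u'}$ is transcendental, which gives $\mathcal{O}'\in\Omega_{j}(F/v)$, and that it is nonruled, which gives $\mathcal{O}'\in\Omega_{j}^{*}(F/v)\subseteq\Omega^{*}(F/v)$.

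Both points will be reduced to the corresponding properties of $\kappa_{\mathcal{O}}/\kappa_{u}$ by passing to residual valuations. Let $\bar{w}$ be the residual valuation of $w$ modulo $v'$ on $\kappa_{\mathcal{O}'}$ and $\bar{u}$ the residual valuation of $u$ modulo $u'$ on $\kappa_{u'}$, so that $\kappa_{\bar{w}}=\kappa_{\mathcal{O}}$ and $\kappa_{\bar{u}}=\kappa_{u}$. The point that needs care is the compatibility: under the natural embedding $\kappa_{u'}\hookrightarrow\kappa_{\mathcal{O}'}$ induced by $\mathcal{O}_{u'}=\mathcal{O}_{v'}\cap K$, the valuation $\bar{w}$ restricts to $\bar{u}$. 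Indeed, $\mathcal{O}_{\bar{w}}$ and $\mathcal{O}_{\bar{u}}$ are the images of $\mathcal{O}_{w}$ and $\mathcal{O}_{u}$ in $\kappa_{\mathcal{O}'}=\mathcal{O}_{v'}/\mathfrak{m}_{v'}$ and $\kappa_{u'}=\mathcal{O}_{u'}/\mathfrak{m}_{u'}$ respectively; the image of $\mathcal{O}_{u}$ lands inside $\mathcal{O}_{\bar{w}}$ since $\mathcal{O}_{u}\subseteq\mathcal{O}_{w}$, and conversely if $x\in\mathcal{O}_{u'}$ has residue in $\mathcal{O}_{\bar{w}}$, then $x-y\in\mathfrak{m}_{v'}$ for some $y\in\mathcal{O}_{w}$, so $x\in\mathcal{O}_{w}+\mathfrak{m}_{v'}=\mathcal{O}_{w}$ (as $\mathfrak{m}_{v'}\subseteq\mathcal{O}_{w}$), whence $x\in\mathcal{O}_{w}\cap K=\mathcal{O}_{u}$. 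Thus $\bar{w}|_{\kappa_{u'}}=\bar{u}$, and since $\kappa_{\bar{w}}/\kappa_{\bar{u}}=\kappa_{\mathcal{O}}/\kappa_{u}$ is transcendental, $\bar{w}$ is a residually transcendental extension of $\bar{u}$.

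Now both assertions follow. If $\kappa_{\mathcal{O}'}/\kappa_{u'}$ were algebraic, then $\kappa_{\bar{w}}/\kappa_{\bar{u}}$ would also be algebraic, since residue fields of algebraic extensions are algebraic (\cite[Theorem 3.2.4]{EP}); this contradicts the transcendence of $\kappa_{\mathcal{O}}/\kappa_{u}$, so $\kappa_{\mathcal{O}'}/\kappa_{u'}$ is transcendental and $\mathcal{O}'\in\Omega_{j}(F/v)$. If $\kappa_{\mathcal{O}'}/\kappa_{u'}$ were ruled, then \Cref{Ohm}, applied to the ruled extension $\kappa_{\mathcal{O}'}/\kappa_{u'}$, the valuation $\bar{u}$ on $\kappa_{u'}$, and its residually transcendental extension $\bar{w}$, would give that $\kappa_{\bar{w}}/\kappa_{\bar{u}}=\kappa_{\mathcal{O}}/\kappa_{u}$ is ruled, contradicting $\mathcal{O}\in\Omega_{i}^{*}(F/v)$. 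Hence $\kappa_{\mathcal{O}'}/\kappa_{u'}$ is nonruled and $\mathcal{O}'\in\Omega_{j}^{*}(F/v)\subseteq\Omega^{*}(F/v)$, which shows that $\Omega^{*}(F/v)$ is saturated.

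The main obstacle I anticipate is precisely this bookkeeping for residual valuations: checking that $\bar{w}$ restricts to $\bar{u}$ along $\kappa_{u'}\hookrightarrow\kappa_{\mathcal{O}'}$, i.e.\ that forming the residual valuation commutes with restriction to $K$ (cf.\ \cite[Section 2.3]{EP} and \Cref{inversevaluations}). Once this is in place, transcendence of the residue extension is the standard behaviour of residue fields under algebraic extensions, and nonruledness is an immediate application of \Cref{Ohm}; one may also record, using \Cref{algebraicfunctionfield}, that $\mathsf{rk}(v')=\mathsf{rk}(u')=j$, so that $v'$ genuinely extends a $\mathbb{Z}^{j}$-valuation of $K$, although this is not needed for the argument.
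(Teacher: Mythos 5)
Your proof is correct and takes essentially the same route as the paper's: pass to the residual valuations modulo the coarsening, obtain residual transcendence via \cite[Theorem 3.2.4]{EP}, and rule out ruledness by applying \Cref{Ohm} to the residual valuations, contradicting $\mathcal{O}\in\Omega^{*}(F/v)$. The only difference is that you spell out the compatibility $\bar{w}|_{\kappa_{u'}}=\bar{u}$ of residual valuations with restriction to $K$, which the paper uses implicitly.
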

\begin{proof} Let $w$ be a valuation on $F$ such that $\mathcal{O}_{w}\in\Omega^{*}(F/v).$ Let $w'$ be a coarsening of $w.$ We claim that $\mathcal{O}_{w'}\in\Omega^{*}(F/v).$ It follows by \cite[Theorem 3.2.4]{EP} that $\kappa_{w'}/\kappa_{w'|_{K}}$ is transcendental, and hence is a function field in one variable, by Lemma \ref{algebraicfunctionfield}. Let $\overline{w}$ be the residual valuation of $w$ modulo $w'.$ Let $\nu=w|_{K}.$ Note that $w'|_{K}$ is a coarsening of $\nu.$ Let $\overline{\nu}$ be the residual valuation of $\nu$ modulo $w'|_{K}.$ If $\kappa_{w'}/\kappa_{w'|_{K}}$ were ruled, then $\kappa_{\overline{w}}/\kappa_{\overline{\nu}}$ would be ruled, by Proposition \ref{Ohm}, and hence $\kappa_{w}/\kappa_{\nu}$ would be ruled because $\kappa_{w}=\kappa_{\overline{w}},\kappa_{\nu}=\kappa_{\overline{\nu}},$ which is a contradiction. Therefore $\mathcal{O}_{w'}\in\Omega^{*}(F/v).$ 
\end{proof}

Let $F/K$ be a function field in one variable. It is natural to wonder whether the set $\Omega^{*}(F/v)$ is finite. We shall give a positive answer in \Cref{finiteset} under the assumption that $v\in V(K).$ Assuming that $v$ is a $\mathbb{Z}$-valuation on $K,$ it was shown in \cite[Theorem 5.3]{BG20} by K. Becher and D. Grimm $|\Omega^{*}(F/v)|\leq g+1,$ where $g$ is the genus of $F/K.$ We will show in \Cref{exoptimality} that the above bound is optimal. 

\begin{thm}\label{finiteset} Let $n$ be a positive integer. Assume that $K$ carries a $\mathbb{Z}^{n}$-valuation $v$ such that $\kappa_{v}$ is perfect. Let $F/K$ be a function field in one variable. Then $\bigcup_{1\leq i\leq n}\Omega_{i}^{*}(F/v)$ is finite.
\end{thm}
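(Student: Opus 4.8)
The plan is to induct on $n$. For $n=1$, the valuation $v$ is a $\mathbb{Z}$-valuation on $K$; since by \Cref{algebraicfunctionfield} a residually transcendental extension to $F$ of a coarsening of $v$ has the same rank as that coarsening, and the only rank-one coarsening of $v$ is $v$ itself (\Cref{coarsening}), every element of $\bigcup_{1\le i\le n}\Omega_i^*(F/v)$ has rank one and restricts to $v$ on $K$; so this union equals $\Omega^*(F/v)$, which is finite (indeed of size at most $g+1$, with $g$ the genus of $F/K$) by \cite[Theorem~5.3]{BG20}.

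For the inductive step let $n\ge 2$, put $v_1=\pi_1\circ v$ (a $\mathbb{Z}$-valuation on $K$) and let $\overline v$ be the residual valuation of $v$ modulo $v_1$; this is a $\mathbb{Z}^{n-1}$-valuation on $\overline K:=\kappa_{v_1}$ with residue field $\kappa_{\overline v}=\kappa_v$, hence with perfect residue field, so the induction hypothesis applies to $\overline v$ and any function field in one variable over $\overline K$. Arguing exactly as in the base case, $\Omega_1^*(F/v)=\Omega_1^*(F/v_1)$, which is finite by \cite[Theorem~5.3]{BG20}. For $2\le i\le n$ I would construct an injection
\[
\Omega_i^*(F/v)\;\hookrightarrow\;\bigsqcup_{\mathcal{O}_1\in\Omega_1^*(F/v)}\Omega_{i-1}^*(\kappa_{\mathcal{O}_1}/\overline v)
\]
as follows. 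Given $\mathcal{O}\in\Omega_i^*(F/v)$, set $w=v_{\mathcal{O}}$, a $\mathbb{Z}^i$-valuation by \Cref{discreteresidually}, and let $w_1=\pi_1\circ w$ be its rank-one coarsening. Since $\Omega^*(F/v)$ is saturated (\Cref{nonruledcontained}), $\mathcal{O}_1:=\mathcal{O}_{w_1}$ lies in $\Omega_1^*(F/v)$, and $\kappa_{\mathcal{O}_1}/\kappa_{v_1}$ is a function field in one variable by \Cref{algebraicfunctionfield}. Let $\overline w$ be the residual valuation of $w$ modulo $w_1$, a $\mathbb{Z}^{i-1}$-valuation on $\kappa_{\mathcal{O}_1}$. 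Using the compatibilities of composition and residuation from \cite[Section~2.3]{EP}, one checks that $w_1|_K=v_1$, that $\overline w$ restricts on $\overline K$ to $\pi_{i-1}\circ\overline v$ (a coarsening of $\overline v$), and that the residue field extension $\kappa_{\overline w}/\kappa_{\pi_{i-1}\circ\overline v}$ is literally $\kappa_w/\kappa_{w|_K}$, a nonruled function field in one variable; hence $\mathcal{O}_{\overline w}\in\Omega_{i-1}^*(\kappa_{\mathcal{O}_1}/\overline v)$. By \Cref{inversevaluations}, after fixing a uniformizer of $w_1$ the assignment $w\mapsto\overline w$ is a bijection between $\mathbb{Z}^i$-valuations on $F$ refining $\mathcal{O}_1$ and $\mathbb{Z}^{i-1}$-valuations on $\kappa_{\mathcal{O}_1}$, so $\mathcal{O}\mapsto(\mathcal{O}_1,\mathcal{O}_{\overline w})$ is injective.

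Finishing: $\bigcup_{1\le i\le n}\Omega_i^*(F/v)$ is the union of the finite set $\Omega_1^*(F/v)$ with a set that injects into $\bigsqcup_{\mathcal{O}_1\in\Omega_1^*(F/v)}\bigcup_{1\le j\le n-1}\Omega_j^*(\kappa_{\mathcal{O}_1}/\overline v)$. Each $\kappa_{\mathcal{O}_1}$ is a function field in one variable over $\overline K$ and $\overline v$ is a $\mathbb{Z}^{n-1}$-valuation on $\overline K$ with perfect residue field, so by the induction hypothesis every inner union is finite, while the outer index set is finite; hence the whole set is finite.

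The step I expect to be the main obstacle is the valuation-theoretic bookkeeping just described: verifying that passing to the rank-one coarsening $w_1$ of $w$ and then to the residual valuation $\overline w$ genuinely produces an element of $\Omega_{i-1}^*(\kappa_{\mathcal{O}_1}/\overline v)$ — that is, that $\overline w$ is again residually transcendental over, and nonruled relative to, a coarsening of $\overline v$ — and that this construction is inverse, via \Cref{inversevaluations}, to reconstructing $w$ by composition. Nonruledness transfers cheaply because the relevant residue extension is unchanged; it is the transcendence claim and the identification of the restricted valuation that require care, tracking residual valuations through the towers $K\subseteq\kappa_{v_1}$ and $F\subseteq\kappa_{w_1}$ and combining \Cref{algebraicfunctionfield}, \Cref{Ohm} (as already used in \Cref{nonruledcontained}), and \cite[Section~2.3]{EP}.
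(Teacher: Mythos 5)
Your proposal is correct and takes essentially the same route as the paper's proof: induction on $n$, passing from $\mathcal{O}\in\Omega_i^*(F/v)$ to the rank-one coarsening $w_1=\pi_1\circ v_{\mathcal{O}}$ (which lies in $\Omega_1^*(F/v)$ by \Cref{nonruledcontained} and \Cref{algebraicfunctionfield}, with $\Omega_1^*(F/v)$ finite by \cite[Theorem 5.3]{BG20}) and then to the residual valuation $\overline{w}$ on $\kappa_{w_1}$, with \Cref{inversevaluations} ensuring the assignment determines $\mathcal{O}$. The paper records this as the equality $|\Omega_r^*(F/v)|=\sum_{\mathcal{O}\in\Omega_1^*(F/v)}|\Omega_{r-1}^*(\kappa_{\mathcal{O}}/\overline{v})|$, whereas you only claim an injection, which suffices; the bookkeeping you flag as the main obstacle is exactly what the paper delegates to \Cref{inversevaluations} and \cite[Section 2.3]{EP}.
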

\begin{proof} We prove the statement by induction on $n.$ If $n=1,$ then $v$ is a $\mathbb{Z}$-valuation on $K,$ and it follows by \cite[Theorem 5.3]{BG20} that $\Omega_{1}^{*}(F/v)$ is finite. Assume now that $n>1.$ By the induction hypothesis, for any positive integer $s<n,$ for any field $L$ carrying a $\mathbb{Z}^{s}$-valuation $v'$ and for any function field in one variable $E/L,$ the set $\bigcup_{1\leq i\leq s}\Omega_{i}^{*}(E/v')$ is finite. Let $v_{1}=\pi_{1}\circ v$ and let $\overline{v}$ be the residual valuation of $v$ modulo $v_{1}.$ Then $\overline{v}$ is a $\mathbb{Z}^{n-1}$-valuation on $\kappa_{v_{1}},$ by \Cref{remark}. Let $r\in\{1,\ldots,n\}.$ We claim that $\Omega_{r}^{*}(F/v)$ is finite. If $r=1,$ then this follows by \cite[Theorem 5.3]{BG20}. Assume $r>1.$ Let $w$ be a valuation on $F$ such that $\mathcal{O}_{w}\in\Omega_{r}^{*}(F/v).$ Let $w_{1}=\pi_{1}\circ w.$ It follows by Lemma \ref{nonruledcontained} and by Lemma \ref{algebraicfunctionfield} that $\mathcal{O}_{w_{1}}\in\Omega_{1}^{*}(F/v)$ and that $\mathcal{O}_{w_{1}}$ is an extension of $\mathcal{O}_{v_{1}}.$ Since $\kappa_{w_{1}}/\kappa_{v_{1}}$ is a function field in one variable and $\overline{v}$ is a $\mathbb{Z}^{n-1}$-valuation on $\kappa_{v_{1}},$ we have that $\Omega_{r-1}^{*}(\kappa_{w_{1}}/\overline{v})$ is finite. Furthermore, we have that $\mathcal{O}_{w}$ is determined by the induce valuation ring $\mathcal{O}_{\overline{w}}\in\Omega_{r-1}^{*}(\kappa_{w_{1}}/\overline{v}),$ where $\overline{w}$ is the residual valuation of $w$ modulo $w_{1},$ by Remark \ref{inversevaluations}. Hence, we have that $|\Omega_{r}^{*}(F/v)|=\sum_{\mathcal{O}\in\Omega_{1}^{*}(F/v)}|\Omega_{r-1}^{*}(\kappa_{\mathcal{O}}/\overline{v})|,$ where $\overline{v}$ is the residual valuation of $v$ modulo $v_{\mathcal{O}}.$ Since for every $\mathbb{Z}$-valuation $\nu$ on $F$ the set $\Omega_{r-1}(\kappa_{\nu}/\overline{v})$ is finite, we have that the set $\Omega_{r}^{*}(F/v)$ is finite, and since $r$ was arbitrarily taken, we obtain the statement. 
\end{proof}


Let $v$ be a $\mathbb{Z}$-valuation on $K.$ We denote by $(K^{v},\hat{v})$ the completion of $(K,v).$  

\begin{lem}\label{extension} Let $v$ be a henselian $\mathbb{Z}^{n}$-valuation on $K,$ and let $v_{1}=\pi_{1}\circ v.$ Let $(K^{v_{1}},\hat{v}_{1})$ be the completion of $(K,v_{1}).$ Then $v$ has a unique unramified extension $v'$ to $K^{v_{1}},$ and this extension is a henselian $\mathbb{Z}^{n}$-valuation with $\kappa_{v'}=\kappa_{v}.$ 
\end{lem}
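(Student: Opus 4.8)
The plan is to build $v'$ explicitly as a composition of valuations over the completion and then to check that this is the only possible choice. Write $v_1=\pi_1\circ v$. Since $v$ is henselian, so is its rank-one coarsening $v_1$, and the residual valuation $\bar v$ of $v$ modulo $v_1$ is a henselian $\mathbb{Z}^{n-1}$-valuation on $\kappa_{v_1}$; both assertions are instances of \cite[Corollary 4.1.4]{EP}, used exactly as in the proof of \Cref{unicidad}. Moreover $\kappa_{\bar v}=\kappa_v$ by \Cref{inversevaluations}. The completion $(K^{v_1},\hat v_1)$ is a complete — hence henselian — $\mathbb{Z}$-valued field with $\hat v_1|_K=v_1$, $\Gamma_{\hat v_1}=\Gamma_{v_1}$ and $\kappa_{\hat v_1}=\kappa_{v_1}$, by the standard properties of the completion of a discrete valuation.

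For existence, fix a uniformizer $t\in K$ of $v_1$, so that $\hat v_1(t)=1$, and, using the canonical identification $\kappa_{\hat v_1}=\kappa_{v_1}$, regard $\bar v$ as a $\mathbb{Z}^{n-1}$-valuation on $\kappa_{\hat v_1}$. Let $v'$ be the composition of $\hat v_1$ and $\bar v$ with respect to $t$. By the definition of the composition of valuations together with \Cref{inversevaluations}, $v'$ is a $\mathbb{Z}^{n}$-valuation on $K^{v_1}$, the ring $\mathcal{O}_{v'}$ refines $\mathcal{O}_{\hat v_1}$, the residual valuation of $v'$ modulo $\hat v_1$ is $\bar v$, and $\kappa_{v'}=\kappa_{\bar v}=\kappa_v$. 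Since its rank-one coarsening $\hat v_1$ and its residual valuation $\bar v$ are henselian, $v'$ is henselian by the converse direction of \cite[Corollary 4.1.4]{EP}. Finally $v'|_K=v$: for $a\in K^\times$ we have $\hat v_1(a)=v_1(a)$, and $u:=a\,t^{-v_1(a)}$ lies in $\mathcal{O}_{v_1}^\times\subseteq\mathcal{O}_{\hat v_1}^\times$ with the same residue in $\kappa_{v_1}=\kappa_{\hat v_1}$; hence $v'(a)=(v_1(a),\bar v(\bar u))$, so $v'|_K$ is the composition of $v_1$ and $\bar v$ with respect to $t$, which is equivalent to $v$ by \Cref{inversevaluations}. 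Thus $\mathcal{O}_{v'}\cap K=\mathcal{O}_v$.

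For uniqueness, let $v''$ be an unramified extension of $v$ to $K^{v_1}$; in particular $v''$ refines $\hat v_1$ and $v''|_K=v$. Let $\overline{v''}$ be the residual valuation of $v''$ modulo $\hat v_1$, a valuation on $\kappa_{\hat v_1}=\kappa_{v_1}$. For a nonzero $\xi\in\kappa_{v_1}$ choose a lift $x\in\mathcal{O}_{v_1}^\times$; then $\overline{v''}(\xi)=\pi^{n-1}(v''(x))=\pi^{n-1}(v(x))=\bar v(\xi)$, using $v''|_K=v$. Hence $\overline{v''}=\bar v$, so $\mathcal{O}_{v''}$ — being the preimage of $\mathcal{O}_{\overline{v''}}=\mathcal{O}_{\bar v}$ under $\mathcal{O}_{\hat v_1}\twoheadrightarrow\kappa_{\hat v_1}$ — coincides with $\mathcal{O}_{v'}$, i.e.\ $v''$ is equivalent to $v'$. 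The step I expect to carry the weight is this last one, and it hinges on the fact that completing at $v_1$ changes neither the value group nor the residue field: this is what lets $\bar v$ be transported unambiguously to $\kappa_{\hat v_1}$ and, conversely, forces $\overline{v''}=\bar v$ on any extension lying over $\hat v_1$. It is precisely the requirement of being unramified over the rank-one part — i.e.\ of lying over $\hat v_1$ — that supplies this rigidity; without it the statement would fail, since $K^{v_1}$ carries many inequivalent valuations restricting to $v_1$ on $K$.
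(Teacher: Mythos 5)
Your proof follows essentially the same route as the paper: compose $\hat v_1$ with the residual valuation $\overline{v}$ on $\kappa_{v_1}=\kappa_{\hat v_1}$, get henselianity from \cite[Corollary 4.1.4]{EP} and $\kappa_{v'}=\kappa_{\overline{v}}=\kappa_v$ from \Cref{inversevaluations}, and check $\mathcal{O}_{v'}\cap K=\mathcal{O}_v$; you additionally write out the uniqueness, which the paper leaves implicit, and that argument is sound. One small point: the assertion that $v''$ refines $\hat v_1$ is not part of the definition of an unramified extension but follows because $\pi_1\circ v''$ is a $\mathbb{Z}$-valuation on the complete (hence henselian) field $K^{v_1}$ and therefore must be equivalent to $\hat v_1$ by \Cref{unicidad} (or \cite[Proposition 2.2]{BGVG}), so you should cite that fact rather than treat it as immediate.
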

\begin{proof} We observe that $\kappa_{v_{1}}=\kappa_{\hat{v}_{1}}$ by \cite[Theorem 1.3.4]{EP}. Let $\overline{v}$ be the residual valuation of $v$ modulo $v_{1},$ and let $t$ be an uniformizer of $\hat{v}_{1}.$ We can consider the composition $v'$ of $\hat{v}_{1}$ and $\overline{v}$ with respect to $t.$ Note that $\mathcal{O}_{v'}\cap K=\mathcal{O}_{v},$ and using the fact that $\overline{v}$ and $\hat{v}_{1}$ are henselian $\mathbb{Z}^{n-1}$ and $\mathbb{Z}$-valuation on $\kappa_{v_{1}}$ and $K^{v_{1}},$ respectively, we have that $v'$ is a henselian $\mathbb{Z}^{n}$-valuation on $K^{v_{1}}$ by \cite[Corollary 4.1.4]{EP}, and with $\kappa_{v'}=\kappa_{\overline{v}}=\kappa_{v},$ by \Cref{inversevaluations}.
\end{proof}


Let $F/K, K'/K$ be two field extensions such that $K$ is separably algebraically closed in $K'.$ Then $F\otimes_{K}K'$ is a domain, see \cite[Corollary 1, pag. 203]{Jac64}. We denote by $FK'$ the fraction field of $F\otimes_{K}K'$ and we call it \textit{the compositum of} $F$ \textit{and} $K'$ \textit{over} $K.$ Note that $FK'$ is an extension of $F$ and of $K'.$ 

A field $K$ that carries a henselian $\mathbb{Z}$-valuation $v$ is separably algebraically closed in its completion $K^{v},$ by \cite[Theorem 32.19]{Wa89}, hence the compositum $FK^{v}$ over $K$ exists for any field extension $F/K.$ In particular, if $F/K$ is a function field in one variable, then $FK^{v}/K^{v}$ is a function field in one variable.

\begin{prop}[D. Harbater, J. Hartmann, D. Krashen]\label{HHK09} Assume that $K$ carries a henselian $\mathbb{Z}$-valuation $v$ such that $\mathcal{O}_{v}$ is excellent. Let $F/K$ be a function field in one variable, and let $E$ be the compositum of $F$ and $K^{v}$ over $K.$ Let $\varphi$ be a quadratic form over $F$ of $\dim\varphi\geq 3.$ If $\varphi$ is isotropic over $E,$ then $\varphi$ is isotropic over $F.$ 
\end{prop}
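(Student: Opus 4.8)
The plan is to reduce the statement to a known local-global principle for isotropy of quadratic forms over function fields of curves over complete (or henselian) discretely valued fields, in the spirit of Harbater--Hartmann--Krashen patching. First I would recall that since $v$ is a henselian $\mathbb{Z}$-valuation on $K$ with $\mathcal{O}_v$ excellent, the field $K$ is separably algebraically closed in its completion $K^v$ (as noted in the excerpt, via \cite[Theorem 32.19]{Wa89}), so the compositum $E = FK^v$ is well-defined and is a function field in one variable over $K^v$. The point is that $K^v$ is a complete discretely valued field, so over $E$ one has the full strength of the field patching machinery of Harbater--Hartmann--Krashen available.

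Next I would set up the patching picture. Choose a regular model: since $\mathcal{O}_v$ is excellent and $F/K$ is a function field in one variable, $E/K^v$ admits a normal (indeed, after resolution, regular) projective model $\mathcal{X}$ over $\mathcal{O}_{\hat v}$, the valuation ring of the completion. The field $E$ then decomposes via the ``patches'' indexed by points and open subsets of the closed fibre: $E$ embeds into a finite collection of overfields $F_\xi$ (completions at closed points $\xi$) and $F_U$ (rings of functions regular on affine opens $U$), and a quadratic form of dimension $\geq 3$ over $E$ is isotropic over $E$ if and only if it is isotropic over every $F_\xi$ and every $F_U$ — this is precisely \cite[Theorem 4.2]{HHK09} or its local-global corollary for isotropy. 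This is where the hypothesis $\dim\varphi \geq 3$ enters: the local-global principle for isotropy can fail for binary forms but holds for forms of dimension at least three.

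The main content is then to descend isotropy from $E$ to $F$. Here I would use that $K^v$ is an \emph{immediate} (unramified, same residue field) extension of $K$ in the relevant sense — more precisely, $K$ is existentially closed in $K^v$ for the relevant first-order theory, or one argues directly: a regular model $\mathcal{Y}$ of $F$ over $\mathcal{O}_v$ base-changes to a regular model of $E$ over $\mathcal{O}_{\hat v}$, with the same closed fibre (since completion does not change the residue field or the special fibre). Consequently the patches for $E$ are obtained from the patches for $F$ by the same completion process, and isotropy of $\varphi$ over the $E$-patches is equivalent to isotropy over the corresponding $F$-patches, because each $F$-patch is henselian along its relevant ideal and its completion is the corresponding $E$-patch (Hensel's lemma lifts a zero over the completion of a henselian local ring to the ring itself, for a smooth affine quadric, using $\dim\varphi\geq 3$ so that the quadric hypersurface has a smooth rational point). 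Applying the local-global principle now \emph{over} $F$ (again \cite[Theorem 4.2]{HHK09}, valid since $K$ is henselian and $\mathcal{O}_v$ excellent) yields that $\varphi$ is isotropic over $F$.

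The step I expect to be the main obstacle is the compatibility of the two patching setups: verifying carefully that the $F$-patches and $E$-patches correspond under completion, and that isotropy transfers across each individual patch. Concretely, one must check that for each closed point $\xi$ of the common special fibre the completed local ring of the $F$-model and of the $E$-model have the same fraction field up to the base change, and invoke the right form of Hensel/Greenberg approximation for the smooth quadric $\varphi = 0$ of dimension $\geq 3$; the excellence hypothesis on $\mathcal{O}_v$ is exactly what guarantees the model of $F$ can be taken regular and that completions behave well. Once that bookkeeping is in place, the argument is a formal diagram chase between the two local-global principles, and I would present it by first stating the HHK local-global principle as a black box, then reducing the $E$-statement and the $F$-statement to it, and finally matching the patch data.
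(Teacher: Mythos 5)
Your plan has a genuine gap at its crucial final step. The paper itself does not reprove this statement: it simply cites \cite[Lemma 4.11]{HHK09}. Your proposal instead tries to rederive that lemma by running the patching local--global principle twice, once over $E$ and once over $F$, and matching patches. The matching of patches is fine: since $\mathcal{O}_{v}$ is excellent and henselian, a regular model of $F$ over $\mathcal{O}_{v}$ base-changes to a regular model of $E$ over the completed valuation ring with the same closed fibre, and the completed local rings (hence the patch fields $F_{\xi},F_{U}$) literally coincide. The problem is the sentence ``Applying the local--global principle now over $F$ (again \cite[Theorem 4.2]{HHK09}, valid since $K$ is henselian and $\mathcal{O}_{v}$ excellent).'' The Harbater--Hartmann--Krashen local--global theorem is proved by field patching over a \emph{complete} discretely valued base; completeness is essential for the factorization and intersection properties underlying patching, and the theorem is not available over a merely henselian base such as $K$. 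Indeed, the entire purpose of Lemma 4.11 in \cite{HHK09} is to transfer their results from complete to excellent henselian bases; if patching worked directly over $F$, no such descent lemma would be needed, so your argument is in effect circular: knowing $\varphi$ is isotropic over every patch only recovers isotropy over $E$, not over $F$.

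The correct mechanism for the descent from $E$ to $F$ is approximation rather than a second patching step, and it is applied globally, not patchwise. Because $\mathcal{O}_{v}$ is excellent and henselian, $K\to K^{v}$ is a regular homomorphism and one has Artin/N\'eron--Popescu approximation (equivalently, one can use smoothness of the projective quadric of a nondegenerate form of dimension $\geq 3$ together with density of $K$ in $K^{v}$ and henselianity): an isotropic vector over $E=\mathrm{Frac}(F\otimes_{K}K^{v})$ is, after spreading $F$ out as the fraction field of a finitely generated $K$-algebra and clearing denominators, a $K^{v}$-solution of a finite polynomial system over $K$, and approximation produces a $K$-solution, i.e.\ an isotropic vector over $F$. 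Your remark about Hensel/Greenberg lifting for the smooth quadric is the right germ of an idea, but it must be deployed in this form at the level of $K\subseteq K^{v}$; as written, applied patch by patch and then fed into a nonexistent patching theorem over $F$, the proof does not close.
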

\begin{proof} See \cite[Lemma 4.11]{HHK09}.
\end{proof}

\begin{cor}\label{corolariocota} Assume that $K$ carries a henselian $\mathbb{Z}$-valuation $v$ such that $\mathcal{O}_{v}$ is excellent. Let $F/K$ be a function field in one variable, and let $E$ be the compositum of $F$ and $K^{v}$ over $K.$ Then $$F^{\times}\cap \mathsf{S}_{k}(E)=\mathsf{S}_{k}(F),$$ for any $k\geq 2.$ In particular $p(F)\leq p(E).$ 
\end{cor}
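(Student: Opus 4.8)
The plan is to prove the two assertions separately, deducing the ``in particular'' statement from the displayed equality applied with $k = p(E)$. The inclusion $\mathsf{S}_k(F)\subseteq F^\times\cap\mathsf{S}_k(E)$ is trivial for every $k$, since $F\subseteq E$ and writing an element as a sum of $k$ squares in $F$ exhibits it as such in $E$. So the content is the reverse inclusion $F^\times\cap\mathsf{S}_k(E)\subseteq\mathsf{S}_k(F)$ for $k\geq 2$.

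For the reverse inclusion, fix $a\in F^\times$ that is a sum of $k$ squares in $E$, say $a=b_1^2+\dots+b_k^2$ with $b_i\in E$. The standard device is to pass to an isotropy statement: $a$ is a sum of $k$ squares over $E$ if and only if the quadratic form $\varphi=\langle 1,1,\dots,1,-a\rangle$ of dimension $k+1$ is isotropic over $E$ (the ``if'' direction uses that over $E$ the element $a$ is represented by $\langle k\times 1\rangle$, hence is a sum of $k$ squares, possibly after clearing a nonzero coordinate). Since $k\geq 2$, we have $\dim\varphi=k+1\geq 3$, so \Cref{HHK09} applies: $\varphi$ isotropic over $E$ implies $\varphi$ isotropic over $F$. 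Then $a$ is a value of $\langle k\times 1\rangle$ over $F$; as $a\in F^\times$, a short argument (scaling to eliminate a zero coordinate, or invoking that a nonzero value of a multiple of $\langle 1\rangle$ which is isotropic is still a sum of that many squares) shows $a\in\mathsf{S}_k(F)$. The only mild subtlety here is the bookkeeping converting ``isotropic'' back to ``sum of exactly $k$ squares'': if $\langle k\times 1, -a\rangle$ is isotropic over $F$ one gets $c_0^2 = a(c_1^2+\dots) - (\text{rest})$ type relations and must argue that $a$ is actually a sum of $k$ squares in $F$, not merely $k+1$ or fewer; this is the classical fact that if $\langle 1,\dots,1\rangle$ ($k$ ones) together with $\langle -a\rangle$ is isotropic and $a\neq 0$, then $a$ is represented by the $k$-dimensional form $\langle 1,\dots,1\rangle$, which is exactly what we want.

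Granting the displayed equality, the ``in particular'' follows: every element of $\mathsf{S}(F)$ lies in $\mathsf{S}(E)=\mathsf{S}_{p(E)}(E)$ (by definition of $p(E)$, assuming $p(E)<\infty$; if $p(E)=\infty$ there is nothing to prove), hence every nonzero sum of squares in $F$ lies in $F^\times\cap\mathsf{S}_{p(E)}(E)=\mathsf{S}_{p(E)}(F)$, which says $\mathsf{S}(F)=\mathsf{S}_{p(E)}(F)$ and therefore $p(F)\leq p(E)$. One should note that $E/K^v$ is indeed a function field in one variable (recorded in the discussion preceding \Cref{HHK09}), and that the hypothesis ``$\mathcal{O}_v$ excellent'' is exactly what \Cref{HHK09} requires, so no extra assumptions are needed.

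I expect no serious obstacle: the proof is essentially a one-line application of \Cref{HHK09} wrapped in the translation between ``sum of $k$ squares'' and ``isotropy of $\langle (k)\times 1, -a\rangle$''. The only place demanding a little care is ensuring that the isotropy over $F$ really yields membership in $\mathsf{S}_k(F)$ rather than in $\mathsf{S}_{k+1}(F)$ or in $\mathsf{S}_j(F)$ for some $j<k$ — but since $\mathsf{S}_j(F)\subseteq\mathsf{S}_k(F)$ for $j\leq k$ anyway (pad with zero squares, noting the element is nonzero), even this is harmless, and the representation statement for isotropic forms handles the upper end. Thus the argument is short and the ``main obstacle'' is merely choosing the cleanest formulation of the square-sum/isotropy dictionary.
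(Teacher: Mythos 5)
Your proposal is correct and follows essentially the same route as the paper: the paper's proof is exactly the observation that $\varphi = k\times\langle 1\rangle\perp\langle-\sigma\rangle$ is isotropic over $E$, hence over $F$ by \Cref{HHK09} (which needs $k\geq 2$ so that $\dim\varphi\geq 3$), giving $\sigma\in\mathsf{S}_{k}(F)$, with the Pythagoras-number consequence noted as immediate. Your extra care about converting isotropy back into a representation by $k\times\langle 1\rangle$ is sound and just makes explicit what the paper leaves implicit.
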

\begin{proof} Let $k\geq 2.$ Let $\sigma\in F^{\times}\cap \mathsf{S}_{k}(E).$ Let $\varphi=k\times\langle 1\rangle\perp \langle-\sigma\rangle.$  Since $\varphi$ is isotropic over $E,$ then $\varphi$ is isotropic over $F$ by \Cref{HHK09}, hence $\sigma\in \mathsf{S}_{k}(F).$ The second statement follows trivially. 
\end{proof}

\section{The group of nonzero sums of squares} 

\label{Section HPF}



Let $K$ be a field. We define $$s(K)=\inf\{d\in\mathbb{N}\mid -1\in S_{d}(K)\}\in\mathbb{N}\cup\{\infty\},$$ the \textit{level} of $K.$ If $s(K)<\infty,$ we say that $K$ is \emph{nonreal}, otherwise we say that $K$ is real.\\\\
For a valued field $(K,v),$ a function field in one variable $F/K,$ and $r\in\mathbb{N},$ we define $$\mathcal{X}^{r}(F/v)=\{\mathcal{O}\in\Omega^{*}(F/v)\mid 2^{r}\leq s(\kappa_{\mathcal{O}})<\infty\:\},$$ and 
\begin{equation*} 
\begin{split}
\mathcal{E}^{r}(F/v) & = \{x\in F^{\times}\mid x\in\mathcal{O}^{\times}F^{\times 2}\:\mbox{for all}\:\mathcal{O}\in\mathcal{X}^{r}(F/v)\} \\
 & = \{x\in F^{\times}\mid v_{\mathcal{O}}(x)\in 2\Gamma_{v_{\mathcal{O}}}\:\mbox{for all}\:\mathcal{O}\in\mathcal{X}^{r}(F/v)\}.
\end{split}
\end{equation*}

\begin{lem}\label{aresidual} Let $K$ be a field and $v\in V(K).$ Let $F/K$ be a function field in one variable. Let $\mathcal{O}\in \Omega_{1}^{*}(F/v).$ Let $a\in F^{\times}$ and $r\in\mathbb{N}.$ If $a\in\mathcal{E}^{r}(F/v)\cap\mathcal{O}^{\times},$ then $\overline{a}\in\mathcal{E}^{r}(\kappa_{\mathcal{O}}/\overline{v}),$ where $\overline{v}$ is the residual valuation of $v$ modulo $v_{\mathcal{O}}|_{K}.$ 
\end{lem}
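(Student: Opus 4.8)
The statement I need to prove is \Cref{aresidual}: given $v \in V(K)$, a function field in one variable $F/K$, a valuation ring $\mathcal{O} \in \Omega_1^*(F/v)$, an element $a \in F^\times$ and $r \in \mathbb{N}$, if $a \in \mathcal{E}^r(F/v) \cap \mathcal{O}^\times$, then $\overline{a} \in \mathcal{E}^r(\kappa_{\mathcal{O}}/\overline{v})$, where $\overline{v}$ is the residual valuation of $v$ modulo $v_{\mathcal{O}}|_K$. The plan is to unwind the definitions on both sides. By definition, $\overline{a} \in \mathcal{E}^r(\kappa_{\mathcal{O}}/\overline{v})$ means: for every $\mathcal{Q} \in \mathcal{X}^r(\kappa_{\mathcal{O}}/\overline{v})$ we have $v_{\mathcal{Q}}(\overline{a}) \in 2\Gamma_{v_{\mathcal{Q}}}$. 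So I fix such a $\mathcal{Q}$ and must produce, out of it, some $\mathcal{O}' \in \mathcal{X}^r(F/v)$ with $a \in \mathcal{O}'^\times F^{\times 2}$, in a way that transfers the conclusion back down.

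**The key correspondence.** The crucial tool is \Cref{inversevaluations} (and the composition of valuations): since $\mathcal{O} \in \Omega_1^*(F/v)$ is a $\mathbb{Z}$-valuation ring on $F$ and $\mathcal{Q}$ corresponds to a $\mathbb{Z}^d$-valuation $\overline{w}$ on $\kappa_{\mathcal{O}}$, composing $v_{\mathcal{O}}$ with $\overline{w}$ (with respect to a uniformizer of $v_{\mathcal{O}}$) yields a $\mathbb{Z}^{d+1}$-valuation $w'$ on $F$ whose residue field is $\kappa_{\mathcal{Q}}$ and whose associated valuation ring $\mathcal{O}' := \mathcal{O}_{w'}$ is a refinement of $\mathcal{O}$. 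First I would check that $\mathcal{O}' \in \Omega^*(F/v)$: it is a refinement of $\mathcal{O}$, which lies in $\Omega^*(F/v)$, and $\mathcal{O}$ is a coarsening of $\mathcal{O}'$; one needs that being a nonruled residually transcendental extension is preserved under refinement in the appropriate sense — here the residue extension $\kappa_{\mathcal{O}'}/\kappa_{\mathcal{O}' \cap K}$ factors through $\kappa_{\mathcal{O}}/\kappa_{\mathcal{O}\cap K}$ and through the algebraic residual step, and I can invoke \Cref{Ohm} (and \Cref{algebraicfunctionfield}) exactly as in the proof of \Cref{nonruledcontained} to see that nonruledness propagates. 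Next, $\kappa_{\mathcal{O}'} = \kappa_{\mathcal{Q}}$, and since $\mathcal{Q} \in \mathcal{X}^r(\kappa_{\mathcal{O}}/\overline{v})$ we have $2^r \le s(\kappa_{\mathcal{Q}}) < \infty$, so $\mathcal{O}' \in \mathcal{X}^r(F/v)$.

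**Transferring the valuation condition.** Now, because $a \in \mathcal{E}^r(F/v)$, the hypothesis gives $v_{\mathcal{O}'}(a) \in 2\Gamma_{v_{\mathcal{O}'}}$. The value group $\Gamma_{w'}$ sits in the extension $0 \to \Gamma_{\overline{w}} \to \Gamma_{w'} \to \Gamma_{v_{\mathcal{O}}} \to 0$, i.e. $\Gamma_{w'} \cong \mathbb{Z} \times \Gamma_{\overline{w}}$ ordered lexicographically, with the first coordinate of $w'(a)$ equal to $v_{\mathcal{O}}(a)$. Since $a \in \mathcal{O}^\times$, we have $v_{\mathcal{O}}(a) = 0$, so $w'(a) = (0, \overline{w}(\overline{a}))$ lies in the subgroup $\{0\}\times\Gamma_{\overline{w}} = \Gamma_{\overline{w}}$. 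Then $w'(a) \in 2\Gamma_{w'}$ forces $\overline{w}(\overline{a}) \in 2\Gamma_{\overline{w}}$: indeed if $w'(a) = 2\gamma$ for some $\gamma = (\gamma_0, \gamma_1) \in \mathbb{Z}\times\Gamma_{\overline{w}}$, then comparing first coordinates gives $0 = 2\gamma_0$, so $\gamma_0 = 0$ and $\gamma \in \Gamma_{\overline{w}}$, whence $\overline{w}(\overline{a}) = 2\gamma_1 \in 2\Gamma_{\overline{w}}$. Equivalently, using the second description of $\mathcal{E}^r$, $a \in \mathcal{O}'^\times F^{\times 2}$ combined with $a \in \mathcal{O}^\times$ shows $\overline{a} \in \mathcal{Q}^\times \kappa_{\mathcal{O}}^{\times 2}$, i.e. $v_{\mathcal{Q}}(\overline{a}) \in 2\Gamma_{v_{\mathcal{Q}}}$. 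Since $\mathcal{Q} \in \mathcal{X}^r(\kappa_{\mathcal{O}}/\overline{v})$ was arbitrary, $\overline{a} \in \mathcal{E}^r(\kappa_{\mathcal{O}}/\overline{v})$, as desired.

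**Main obstacle.** The routine part is the value-group bookkeeping in the last paragraph; the genuinely delicate point is the \emph{first} step — showing $\mathcal{O}' = \mathcal{O}_{w'}$ actually lands in $\Omega^*(F/v)$, i.e. that the residual valuation $\overline{w}$ on $\kappa_{\mathcal{O}}$ being residually transcendental over (a coarsening of) $\overline{v}$ and nonruled implies that $w'$ is residually transcendental over a coarsening of $v$ and nonruled over $F$. This requires carefully matching up the tower of residue fields $\kappa_{\mathcal{O}'} \supseteq \kappa_{\mathcal{O}} \supseteq \cdots$ with the tower on the $K$-side and invoking \Cref{Ohm} at the right level; I also need to confirm that $\overline{v}$ — defined here as the residual valuation of $v$ modulo $v_{\mathcal{O}}|_K$ — is itself a $\mathbb{Z}^{s}$-valuation of the expected rank (via \Cref{discreteresidually} / \Cref{coarsening}) so that the target set $\mathcal{X}^r(\kappa_{\mathcal{O}}/\overline{v})$ and hence $\mathcal{E}^r(\kappa_{\mathcal{O}}/\overline{v})$ is even defined in the sense used in the statement. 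These are exactly the compatibility checks already carried out in \Cref{nonruledcontained} and \Cref{finiteset}, so I expect to be able to quote or mirror those arguments rather than redo them from scratch.
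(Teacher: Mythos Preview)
Your proposal is correct and follows essentially the same route as the paper: fix $\mathcal{Q}\in\mathcal{X}^{r}(\kappa_{\mathcal{O}}/\overline{v})$, compose $v_{\mathcal{O}}$ with $v_{\mathcal{Q}}$ to obtain a valuation $w'$ on $F$, check $\mathcal{O}_{w'}\in\mathcal{X}^{r}(F/v)$, and read off the parity condition from $w'(a)=(0,v_{\mathcal{Q}}(\overline{a}))\in 2\Gamma_{w'}$.

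One remark on what you flagged as the ``main obstacle'': it is lighter than you fear, and the paper does \emph{not} invoke \Cref{Ohm} or the argument of \Cref{nonruledcontained} here. Instead it performs the parallel composition on the base: setting $v_{1}=v_{\mathcal{O}}|_{K}$ and $v'=v_{\mathcal{Q}}|_{\kappa_{v_{1}}}$, one composes $v_{1}$ with $v'$ (with respect to a uniformizer of $v_{1}$) to get a valuation $\nu$ on $K$, and then $\mathcal{O}_{w'}\cap K=\mathcal{O}_{\nu}$ with $\mathcal{O}_{v}\subseteq\mathcal{O}_{\nu}$. This identifies the residue extension $\kappa_{\mathcal{O}_{w'}}/\kappa_{\mathcal{O}_{w'}\cap K}$ literally with $\kappa_{\mathcal{Q}}/\kappa_{v'}$, which is nonruled \emph{by hypothesis} since $\mathcal{Q}\in\Omega^{*}(\kappa_{\mathcal{O}}/\overline{v})$. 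So nonruledness is inherited directly from $\mathcal{Q}$, not deduced via Ohm; your plan to ``mirror \Cref{nonruledcontained}'' would work but is a detour.
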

\begin{proof} Let $v_{1}=v_{\mathcal{O}}|_{K}.$ It follows by Lemma \ref{algebraicfunctionfield} and Proposition \ref{discreteresidually} that $v_{1}$ is equivalent to a $\mathbb{Z}$-valuation. Let $\overline{v}$ be the residual valuation of $v$ modulo $v_{1}.$ Let $a\in\mathcal{E}^{r}(F/v)\cap\mathcal{O}^{\times}.$ Let $\mathcal{O}'\in\mathcal{X}^{r}(\kappa_{\mathcal{O}}/\overline{v}).$ We need to show that $\overline{a}\in\mathcal{O}'\kappa_{\mathcal{O}}^{\times 2}.$ Let $v'=v_{\mathcal{O}'}|_{\kappa_{v_{1}}}.$ Let $\pi,\pi'$ be uniformizers of $v_{\mathcal{O}}$ and $v_{1}$ respectively. Let $w$ be a composition of $v_{\mathcal{O}}$ and $v_{\mathcal{O}'}$ with respect to $\pi,$ and let $\nu$ be a composition of $v_{1}$ and $v'$ with respect to $\pi'.$ Since $\kappa_{\mathcal{O}'}=\kappa_{w}$ and $\kappa_{v'}=\kappa_{\nu}$ by Remark \ref{inversevaluations}, we have that $\kappa_{w}/\kappa_{\nu}$ is a function field in one variable with $2^{r}\leq s(\kappa_{w})<\infty$ and $\mathcal{O}_{v}\subseteq \mathcal{O}_{w|_{K}}=\mathcal{O}_{\nu},$ that is $\mathcal{O}_{w}\in\mathcal{X}^{r}(F/v).$ Then $a\in\mathcal{O}_{w}^{\times}F^{\times 2}\cap\mathcal{O}^{\times},$ because $a\in\mathcal{E}^{r}(F/v).$ This implies that $\overline{a}\in \mathcal{O}'^{\times}\kappa_{\mathcal{O}}^{\times 2},$ because $(0,v_{\mathcal{O'}}(\overline{a}))=w(a)\in 2\Gamma_{w}.$
\end{proof}

We say that a valuation $v$ on $K$ is real or nonreal, respectively, if $\kappa_{v}$ has the corresponding property. We say that a valuation $v$ on $K$ is nondyadic if $\mathsf{char}(\kappa_{v})\neq 2.$ We recall that an ordered abelian group that admits a minimal positive element is called \emph{discrete}.

\begin{lem}\label{nivelpythagoras} Let $v$ be a nonreal and nondyadic valuation on $K$ such that $\Gamma_{v}$ is discrete. Then $s(\kappa_{v})<p(K).$ 
\end{lem}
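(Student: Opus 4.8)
The plan is to relate the level of the residue field $\kappa_v$ to a sum-of-squares representation that can be lifted to $K$ via Hensel's lemma, thereby producing a short representation of $-1$ in $K$ witnessing $s(\kappa_v) < p(K)$. First I would set $s = s(\kappa_v)$, which is finite by hypothesis (and one may assume $p(K)$ is finite, otherwise there is nothing to prove). Since $\kappa_v$ is nonreal of level $s$, there exist $\bar a_1,\ldots,\bar a_s \in \kappa_v$ with $-1 = \bar a_1^2 + \cdots + \bar a_s^2$ and $s$ minimal; in particular not all $\bar a_i$ vanish, so after reindexing $\bar a_1 \neq 0$. Lift each $\bar a_i$ to $a_i \in \mathcal O_v$. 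Then $1 + a_1^2 + \cdots + a_s^2 \in \mathfrak m_v$, i.e. $\sigma := 1 + a_1^2 + \cdots + a_s^2$ has positive value, while $a_1^2$ is a unit; hence $1 + a_1^{-2}(a_2^2 + \cdots + a_s^2) = a_1^{-2}\sigma \in \mathfrak m_v$ as well, so $-a_1^{-2}(1 + a_2^2 + \cdots + a_s^2) \equiv 1 \pmod{\mathfrak m_v}$ is a square modulo $\mathfrak m_v$.

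The key step is then to show that $\sigma = 1 + a_1^2 + \cdots + a_s^2$, an element of $\mathfrak m_v$, is actually a square in $K$ — or more precisely that $-1$ becomes a sum of at most $s$ squares already in $K$ up to the obstruction controlled by $p(K)$. Here is where the discreteness and nondyadic hypotheses enter: since $v$ is nondyadic, $2 \in \mathcal O_v^\times$, and since $\Gamma_v$ is discrete with minimal positive element, pick a uniformizer-type element (an element of minimal positive value) — but more robustly, I would argue as follows. The element $\tau := -(1 + a_2^2 + \cdots + a_s^2)/a_1^2 \in \mathcal O_v^\times$ reduces to $1$ in $\kappa_v$, hence $\tau = 1 + m$ with $m \in \mathfrak m_v$. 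I want to conclude $\tau \in \sum_{s-1}(K) \cdot (\text{something})$... Actually the cleaner route: $-1 \cdot a_1^2 = 1 + a_2^2 + \cdots + a_s^2 + (\text{element of }\mathfrak m_v)$; rearranging, $-(a_1^2) - a_2^2 - \cdots - a_s^2 = -1 + (1 + a_1^2 + \cdots + a_s^2) = -1 + \sigma$ where $\sigma \in \mathfrak m_v$. So $-1 = \sigma - (a_1^2 + \cdots + a_s^2)$. If $\sigma$ were a sum of squares in $K$, then $-1 + (a_1^2 + \cdots + a_s^2) = \sigma \in \mathsf S(K) = \mathsf S_{p(K)}(K)$, whence $-1 \in \mathsf S_{p(K) + s}(K)$ — too weak. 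Instead I would exploit that $\sigma \in \mathfrak m_v$ and, using discreteness, write $\sigma = t \cdot u$ or handle the square-class of $\sigma$ directly; the sharper observation is that $1 + a_1^2 + \cdots + a_s^2$ together with $1$ and the $a_i^2$ already shows $\langle 1,1,a_1^2,\ldots\rangle$ is isotropic over $\kappa_v$ with $s$ terms, and one transfers this isotropy.

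Let me state the approach I would actually commit to: show that the quadratic form $q = \langle 1 \rangle \perp s \times \langle 1 \rangle$ over $K$ — i.e. $(s+1)\times\langle 1\rangle$ — is isotropic over $\kappa_v$ (immediate, as $-1$ is a sum of $s$ squares there), hence by a suitable lifting principle for the nondyadic discretely valued field $(K,v)$ the anisotropic part behaves well; concretely, since $1 + \sum a_i^2 \in \mathfrak m_v$ and we may assume (using that $\Gamma_v$ is discrete, replacing $\sigma$ by $\sigma \cdot (\text{unit square})$ is harmless) that this element either vanishes — giving $-1 \in \mathsf S_s(K)$ directly and hence $s(K) \le s < p(K)$ since $K$ real-or-not... — or is a nonzero element of $\mathfrak m_v$. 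In the latter case I use that $\sigma$ is a sum of $s+1$ squares in $K$ that lies in $\mathfrak m_v$, so $v(\sigma) > 0$; by discreteness and nondyadicity the classical result (e.g. via Hensel) that $1 + \mathfrak m_v \subseteq K^{\times 2}$ when $v$ is nondyadic and $\Gamma_v$ is... no, that needs $2$-henselianity. So the honest main obstacle is precisely this: I do \emph{not} have henselianity of $v$ here, only discreteness and nondyadicity, so I cannot lift the square root of $\tau = 1+m$ for free. The resolution I expect the author uses — and which I would pursue — is to pass to an auxiliary comparison: $-\sigma/a_1^2 = 1 + (a_2^2 + \cdots + a_s^2)/a_1^2 - \sigma/a_1^2 + \sigma/a_1^2$... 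I would instead bound via $p(K)$ applied to $\sigma \in \mathsf S(K)$: we have $-a_1^2 = (a_2^2 + \cdots + a_s^2) - \sigma$, and since $\sigma \in \mathsf S_{s+1}(K) \subseteq \mathsf S(K) = \mathsf S_{p(K)}(K)$, if $p(K) \le s$ then $-1 \in \mathsf S(K)$ forcing $s(K) < \infty$ and a short computation gives $s(\kappa_v) \le s(K) \le p(K)$... The cleanest correct statement: $\sigma \ne 0$ (if $\sigma = 0$ then $s(K)\le s$, and since $K$ nonreal forces $p(K) = s(K)+1 > s \ge s(\kappa_v)$, done), and if $\sigma \ne 0$ then $\sigma \in \mathfrak m_v$ is a nonzero sum of squares; applying $p(K)$, write $\sigma = c_1^2 + \cdots + c_{p(K)}^2$. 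Now $v(\sigma) > 0$ so not all $v(c_i) = 0$; but some $c_i \ne 0$, and comparing: the minimal value among $v(c_i)$ is $\le \frac12 v(\sigma)$... at this point one sees $s(\kappa_v) \le p(K) - 1 < p(K)$ by reducing the identity $\sum c_i^2 = \sigma \equiv 0$ scaled by $c_j^{-2}$ for $j$ achieving minimal value — this reduction gives $-1$ as a sum of $< p(K)$ squares in $\kappa_v$. This last reduction is the heart of the argument, and formalizing ``scale by the element of minimal value and reduce'' cleanly — making sure at least two $c_i$ achieve the minimum so the reduced relation is nontrivial, which is exactly where $\Gamma_v$ discrete is used to control the valuations — is the step I expect to require the most care.

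Hence the proof skeleton is: (i) reduce to $p(K) < \infty$ and to $s := s(\kappa_v)$ finite; (ii) produce $\sigma = 1 + a_1^2 + \cdots + a_s^2 \in \mathfrak m_v$ from a minimal representation of $-1$ in $\kappa_v$; (iii) if $\sigma = 0$, conclude $s(K) \le s$ and use nonreality to get $p(K) = s(K) + 1 > s \ge s(\kappa_v)$; (iv) if $\sigma \ne 0$, write $\sigma$ as a sum of $p(K)$ squares in $K$, scale the resulting relation $\sum c_i^2 = \sigma$ by the inverse square of a $c_j$ of minimal (even) value — using discreteness to ensure the scaled relation reduces mod $\mathfrak m_v$ to a \emph{nontrivial} zero sum of at most $p(K)$ squares with at least two nonzero terms, hence (moving one term) an expression of $-1$ as a sum of at most $p(K) - 1$ squares in $\kappa_v$; conclude $s(\kappa_v) \le p(K) - 1 < p(K)$. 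The main obstacle is genuinely step (iv)'s valuation bookkeeping, ensuring the reduced relation does not degenerate.
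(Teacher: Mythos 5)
Your overall strategy (reduce a sum-of-squares identity modulo $v$ to force $-1$ to be a short sum of squares in $\kappa_v$) is in the right family, but the skeleton you commit to has a genuine gap at its central step (iv), and it is precisely the point where the paper does real work. With the unperturbed lift $\sigma=1+a_1^2+\cdots+a_s^2\in\mathfrak m_v$ and a representation $\sigma=c_1^2+\cdots+c_{p(K)}^2$, nothing forces $v(\sigma)>2\min_i v(c_i)$: it is perfectly possible that $v(\sigma)=2\min_i v(c_i)$, in which case scaling by $c_j^{-2}$ and reducing yields a true \emph{nonzero} identity $\overline{\sigma c_j^{-2}}=\sum \bar u_i^{\,2}$ in $\kappa_v$, from which no expression of $-1$ as a sum of fewer than $p(K)$ squares follows. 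Your proposed safeguard (``at least two $c_i$ achieve the minimum'') does not help either, since even then the reduced sum need not vanish; and discreteness by itself does not control the value of your $\sigma$, which can lie in $2\Gamma_v$. What is actually needed is a \emph{sum of squares of value outside $2\Gamma_v$}, and this is what the paper constructs: with $f=1+x_1^2+\cdots+x_d^2\in\mathfrak m_v$ ($d=s(\kappa_v)$) and $z$ of value the minimal positive element $\gamma$ (note $\gamma\notin 2\Gamma_v$ by minimality — this is the real use of discreteness), one forms $\sigma=(z-(1-\tfrac f2))^2+x_1^2+\cdots+x_d^2=z(z-2(1-\tfrac f2))+\tfrac{f^2}{4}$, and nondyadicity gives $v(2-f)=0$, hence $v(\sigma)=\gamma$. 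Since $s(\kappa_v)=d$, every element of $\mathsf{S}_d(K)$ has value in $2\Gamma_v$ (this is the cited [BGVG, Lemma 4.1], essentially your reduction argument applied where it \emph{does} degenerate), so $\sigma\in\mathsf{S}_{d+1}(K)\setminus\mathsf{S}_d(K)$ and $p(K)>d$. Your write-up never produces such an element, so the heart of the proof is missing.

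A secondary error: in your step (iii) you invoke ``$p(K)=s(K)+1$'' for nonreal $K$. This is false in general; one only has $s(K)\leq p(K)\leq s(K)+1$ (e.g.\ finite fields $\mathbb F_q$ with $q\equiv 3\bmod 4$ have $p=s=2$), so the degenerate case $\sigma=0$ (i.e.\ $-1$ already a sum of $s$ squares in $\mathcal O_v$) is not handled by your argument either. The paper's construction needs no such case distinction: it works uniformly, including when $f=0$.
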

\begin{proof} Let $d\in\mathbb{N}$ be such that $d=s(\kappa_{v}).$ There exist $f\in\mathfrak{m}_{v},x_{1},\ldots,x_{d}\in K$ such that $f=1+x_{1}^{2}+\cdots +x_{d}^{2}.$ Let $b=(1-\frac{f}{2})^{2}+x_{1}^{2}+\cdots +x_{d}^{2}=\frac{f^{2}}{4}.$ Let $\gamma\in\Gamma_{v}$ be the minimal positive element and let $z\in K$ be such that $v(z)=\gamma.$ Note that $\gamma\notin 2\Gamma_{v}.$ Hence $0<v(z)<2v(f)=v(b).$ Let $\sigma=(z-(1-\frac{f}{2}))^{2}+x_{1}^{2}+\cdots +x_{d}^{2}.$ Since $\sigma=z(z-2(1-\frac{f}{2}))+b,\:0<v(z)<v(b),$ and since $v(2-f)=0<v(z),$ we have that $v(\sigma)=\gamma.$ It follows by Lemma \cite[Lemma 4.1]{BGVG} that $\sigma\notin \mathsf{S}_{d}(K),$ and hence $s(\kappa_{v})<p(K).$ 
\end{proof}

\begin{lem}\label{contencion} Let $r\in\mathbb{N}^{+}.$ Let $K$ be a field and $v$ a real henselian valuation in $V(K)$ such that $p(\kappa_{v}(X))\leq 2^{r}.$ Let $v_{1}=\pi_{1}\circ v.$ Let $\overline{v}$ be the residual valuation of $v$ modulo $v_{1}.$ Let $(K^{v_{1}},\hat{v}_{1})$ be the completion of $(K,v_{1}).$ Let $v'$ be a composition of $\hat{v}_{1}$ and $\overline{v}$ with respect to a uniformizer of $\hat{v}_{1}.$ Let $F/K$ be a regular function field in one variable. Let $E$ be the compositum of $F$ and $K^{v_{1}}$ over $K.$ Then $$\mathsf{S}(F)\cap\mathcal{E}^{r}(F/v)\subseteq \mathsf{S}(E)\cap\mathcal{E}^{r}(E/v').$$ 
\end{lem}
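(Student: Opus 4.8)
The plan is to prove the inclusion $\mathsf{S}(F)\cap\mathcal{E}^{r}(F/v)\subseteq \mathsf{S}(E)\cap\mathcal{E}^{r}(E/v')$ by taking an element $\sigma\in \mathsf{S}(F)\cap\mathcal{E}^{r}(F/v)$ and checking the two membership conditions defining the right-hand side separately. Since $E/F$ is a field extension, $\sigma\in\mathsf{S}(F)$ immediately gives $\sigma\in\mathsf{S}(E)$, so the content lies entirely in showing $\sigma\in\mathcal{E}^{r}(E/v')$, that is, $v_{\mathcal{O}}(\sigma)\in 2\Gamma_{v_{\mathcal{O}}}$ for every $\mathcal{O}\in\mathcal{X}^{r}(E/v')$. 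First I would fix such an $\mathcal{O}$, let $w=v_{\mathcal{O}}$, and note that by definition $w$ is a residually transcendental extension (to $E$) of a coarsening of $v'$, with nonruled residue field extension and $2^{r}\le s(\kappa_{w})<\infty$.

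The key step is to relate $w$ on $E$ back to a valuation on $F$. The natural candidate is the restriction $w|_{F}$. I would argue as follows: since $E=FK^{v_{1}}$ is obtained from $F$ by the base change along $K\hookrightarrow K^{v_{1}}$, and $K$ is separably algebraically closed in $K^{v_{1}}$ (this is used in the excerpt via \cite[Theorem 32.19]{Wa89}), one expects $w|_{F}$ to still be residually transcendental over the appropriate coarsening of $v$ on $K$, to have the same (or a finite) residue field as far as levels are concerned, and to retain nonruledness. More precisely, I would use \Cref{algebraicfunctionfield} to see that $\kappa_{w}/\kappa_{w|_{K}}$ and $\kappa_{w|_{F}}/\kappa_{w|_{K}}$ are function fields in one variable, use \Cref{coarsening} and \Cref{discreteresidually} to see these valuations are $\mathbb{Z}^{s}$-valuations for suitable $s\le n$, and then compare the levels: $s(\kappa_{w|_F})$ is finite because $\kappa_w/\kappa_{w|_F}$ is a finitely generated extension of transcendence degree $0$ (so a finite extension), whence $s(\kappa_{w|_F})\le 2\cdot s(\kappa_w)<\infty$ and also $s(\kappa_{w|_F})\ge$ something of the form $2^{r'}$; one must be careful here to land inside $\mathcal{X}^{r}(F/v)$ rather than just $\Omega^*(F/v)$. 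The cleanest route is probably to observe that since $\kappa_w/\kappa_{w|_F}$ is finite and $2^{r}\le s(\kappa_w)$, we get $2^{r-1}< s(\kappa_{w|_F})$, hence $w|_F$ defines a valuation ring in $\mathcal{X}^{r'}(F/v)$ for some $r'$ with $2^{r'}\ge 2^{r}$ after possibly composing with a residual valuation — and then use that $\sigma\in\mathcal{E}^{r}(F/v)$ to conclude $w|_F(\sigma)\in 2\Gamma_{w|_F}$, which via the value-group inclusion $\Gamma_{w|_F}\hookrightarrow\Gamma_w$ forces $w(\sigma)\in 2\Gamma_w$.

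I would organize the argument into three pieces: (i) reduce to verifying the condition for a single $\mathcal{O}\in\mathcal{X}^{r}(E/v')$; (ii) show that the coarsening/refinement structure on $E$ matches that on $F$, i.e. that restriction gives a bijection-compatible correspondence between the relevant valuations on $E$ lying over $v'$ and those on $F$ lying over $v$ — here I would lean on \Cref{inversevaluations} to pass between a valuation and its residual valuation, on \Cref{extension} to identify $v'$ on $K^{v_1}$ with the unramified composition built from $\hat v_1$ and $\overline v$ (so that $\kappa_{v'}=\kappa_v$ and coarsenings correspond), and on \Cref{Ohm} / \Cref{nonruledcontained} to transport nonruledness; (iii) chase the divisibility $v_{\mathcal{O}}(\sigma)\in 2\Gamma_{v_{\mathcal{O}}}$ through these identifications using the hypothesis that $\sigma$ lies in $\mathcal{E}^{r}(F/v)$, paying attention that we only need the "$v_{\mathcal{O}}(x)\in 2\Gamma_{v_{\mathcal{O}}}$" formulation of $\mathcal{E}^r$ given in the displayed definition, which makes the value-group bookkeeping the whole game.

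The main obstacle I anticipate is the bookkeeping in step (ii): making sure that when one restricts a valuation $w$ from $E$ down to $F$, the residue field $\kappa_{w|_F}$ genuinely has level in the range $2^{r'}\le s(\kappa_{w|_F})<\infty$ needed to land in some $\mathcal{X}^{r}(F/v)$ (and not merely in $\Omega^*(F/v)$), and that nonruledness is preserved in both directions. The subtle point is that $\kappa_w$ could a priori have strictly larger level than $\kappa_{w|_F}$, so one needs the hypothesis $p(\kappa_v(X))\le 2^r$ together with \Cref{Ohm} (ruledness of residue extensions under residually transcendental extensions) to pin down that the relevant residue fields of $E$-valuations over $v'$ are ruled \emph{exactly} when the corresponding $F$-residue fields are, and hence that $\mathcal{X}^r(E/v')$ is "covered" by $\mathcal{X}^r(F/v)$ under restriction. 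Once that correspondence is set up cleanly, the divisibility statement transfers formally and the proof closes; I would expect the written-out argument to invoke \Cref{algebraicfunctionfield}, \Cref{discreteresidually}, \Cref{inversevaluations}, \Cref{extension}, \Cref{Ohm}, and \Cref{nonruledcontained}, with \Cref{corolariocota} available as a fallback for the $\mathsf{S}(\cdot)$ part if a slicker argument is wanted.
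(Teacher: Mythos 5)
Your overall strategy (restrict a valuation $\mathcal{O}\in\mathcal{X}^{r}(E/v')$ to $F$ and transfer the evenness of values) is the same as the paper's, but the central comparison step is carried out in the wrong direction, and this creates a genuine gap. You claim $s(\kappa_{w|_F})\le 2\cdot s(\kappa_w)<\infty$; for a subfield the inequality goes the other way: since $\kappa_{w|_F}\subseteq\kappa_{w}$, any representation of $-1$ as a sum of squares in $\kappa_{w|_F}$ persists in $\kappa_{w}$, so $s(\kappa_{w|_F})\ge s(\kappa_{w})\ge 2^{r}$, and $\kappa_{w|_F}$ may perfectly well be \emph{real} even though $\kappa_{w}$ is nonreal (compare $\mathbb{R}\subset\mathbb{C}$). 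Consequently your key claim that $\mathcal{X}^{r}(E/v')$ is ``covered'' by $\mathcal{X}^{r}(F/v)$ under restriction is false in general: when $\kappa_{w|_F}$ is real, the restricted ring does not lie in $\mathcal{X}^{r}(F/v)$ (finite level is required), and then the hypothesis $\sigma\in\mathcal{E}^{r}(F/v)$ gives no control on $w|_F(\sigma)$ at all. This is exactly where your proposal never uses $\sigma\in\mathsf{S}(F)$ beyond the trivial inclusion into $\mathsf{S}(E)$, whereas the missing idea is that at a valuation with real residue field the value of a nonzero sum of squares equals $\min\{2w(x_i)\}$ and is therefore even (\cite[Lemma 4.1]{BGVG}). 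The paper's proof is organized around precisely this dichotomy: either $\mathcal{O}\cap F\in\mathcal{X}^{r}(F/v)$, or $\kappa_{\mathcal{O}\cap F}$ is real, and in both cases $\sigma\in(\mathcal{O}\cap F)^{\times}F^{\times 2}\subseteq\mathcal{O}^{\times}E^{\times 2}$.

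Two further points you would need to repair. First, your assertion that $\kappa_{w}/\kappa_{w|_F}$ is finite is unjustified: $E=FK^{v_1}$ is an enormous extension of $F$ (the completion $K^{v_1}/K$ is not algebraic), so this needs an argument rather than a parenthetical remark; the paper avoids it entirely. Second, your route does not use the henselianity of $v$, which is what makes the argument work: it forces $\mathcal{O}\cap K^{v_1}\subseteq\mathcal{O}_{\hat v_1}$, hence $\mathcal{O}_{v}\subseteq\mathcal{O}\cap K$, and it propagates the bound $s(L')\le 2^{r-1}$ from finite nonreal extensions of $\kappa_{v}$ (coming from $p(\kappa_{v}(X))\le 2^{r}$ via \cite[Theorem 3.5]{BV09}) to finite nonreal extensions of $\kappa_{\mathcal{O}\cap K}$ through the henselian residual valuation with residue field $\kappa_{v}$. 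In the nonreal case, nonruledness of $\kappa_{\mathcal{O}\cap F}/\kappa_{\mathcal{O}\cap K}$ is then deduced from $2^{r}<p(\kappa_{\mathcal{O}\cap F})$ (via \Cref{nivelpythagoras} and \cite[Theorem 3.5]{BV09}), not from \Cref{Ohm} as in your sketch. Without these ingredients the valuation $w|_F$ cannot be placed in $\mathcal{X}^{r}(F/v)$, and without \cite[Lemma 4.1]{BGVG} the real case cannot be closed, so the proposal as written does not yield the lemma.
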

\begin{proof} Let $\mathcal{O}\in\mathcal{X}^{r}(E/v').$ Let $\mathcal{O}_{F}=\mathcal{O}\cap F$ and $\mathcal{O}_{K}=\mathcal{O}\cap K.$ We claim that $\mathcal{O}_{F}\in\mathcal{X}^{r}(F/v)$ or $\kappa_{\mathcal{O}_{F}}$ is real. By definition we have that $\mathcal{O}_{v'}\subseteq \mathcal{O}\cap K^{v_{1}},$ and since $v$ is henselian, we have that $\mathcal{O}\cap K^{v_{1}}\subseteq\mathcal{O}_{\hat{v}_{1}}\subseteq K^{v_{1}},$ by \Cref{unicidad}. Hence $\mathcal{O}_{v}\subseteq \mathcal{O}_{K}\subseteq \mathcal{O}_{v_{1}}\subseteq K.$ We denote by $\nu$ the residual valuation of $v$ modulo $v_{\mathcal{O}_{K}}.$ We recall that $\nu$ is a henselian valuation on $\kappa_{\mathcal{O}_{K}}$ such that $\kappa_{\nu}=\kappa_{v},$ by \cite[Corollary 4.1.4]{EP}. Since $p(\kappa_{v}(X))\leq 2^{r},$ it follows by \cite[Theorem 3.5]{BV09} that $s(L)\leq 2^{r-1}$ for every finite nonreal extension $L/\kappa_{v}.$ Let $L'/\kappa_{\mathcal{O}_{K}}$ be a finite nonreal extension. Since $\nu$ is henselian, any extension of $\nu$ to $L'$ is again henselian, and hence we obtain that $s(L')\leq 2^{r-1}.$ Since $\kappa_{\mathcal{O}_{F}}\subseteq \kappa_{\mathcal{O}},$ we have that $s(\kappa_{\mathcal{O}_{F}})\geq s(\kappa_{\mathcal{O}})\geq 2^{r}$ and hence the extension $\kappa_{\mathcal{O}_{F}}/\kappa_{\mathcal{O}_{K}}$ cannot be algebraic. Hence $\kappa_{\mathcal{O}_{F}}/\kappa_{\mathcal{O}_{K}}$ is transcendental. If $\kappa_{\mathcal{O}_{F}}$ is nonreal, since $s(\kappa_{\mathcal{O}_{F}})\geq 2^{r},$ then $2^{r}<p(\kappa_{\mathcal{O}_{F}}),$ by \Cref{nivelpythagoras}, because $v_{\mathcal{O}|_{F}}$ is equivalent to a valuation in $V(F),$ by Proposition \ref{discreteresidually}. This implies that $\kappa_{\mathcal{O}_{F}}/\kappa_{\mathcal{O}_{K}}$ cannot be ruled, by \cite[Theorem 3.5]{BV09}, whenever $\kappa_{\mathcal{O}_{F}}$ is nonreal. Hence $\mathcal{O}_{F}\in \mathcal{X}^{r}(F/v)$ because $\mathcal{O}_{v}\subseteq \mathcal{O}_{K}.$ We conclude that $s(\kappa_{\mathcal{O}_{F}})\geq 2^{r}.$ 
	
Let $a\in \mathsf{S}(F)\cap\mathcal{E}^{r}(F/v).$ 
Hence $a\in \mathcal{O}_{F}^{\times}F^{\times 2}$ because  $\mathcal{O}_{F}\in\mathcal{X}^{r}(F/v)$ or $\kappa_{\mathcal{O}_{F}}$ is real, and the latter follows by \cite[Lemma 4.1]{BGVG}. Therefore $a\in\mathcal{O}_{F}^{\times}F^{\times 2}\subseteq\mathcal{O}^{\times}E^{\times 2}.$ Since $\mathcal{O}$ was arbitrarily chosen, we conclude that $a\in \mathsf{S}(E)\cap \mathcal{E}^{r}(E/v').$  
\end{proof}

For a field $K,$ we set 
$$p'(K)=\left\{\begin{array}{ll} p(K) & \mbox{ if $K$ is real,}\\ s(K)+1 &\mbox{ if $K$ is nonreal.}\end{array}\right.$$

\begin{lem}\label{sumasdecuadradosalgebraico} Let $K$ be a perfect field. Let $L/K$ be an algebraic extension. Then $$p'(L)\leq p(K(X)).$$
\end{lem}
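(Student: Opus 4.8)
The plan is to reduce everything to the case of a finite extension, then play off the cases "\(L\) real" and "\(L\) nonreal" against known facts about Pythagoras numbers of rational function fields. Since \(K\) is perfect, every finite extension \(L/K\) is separable, so the primitive element theorem gives \(L = K(\alpha)\), and \(L\) embeds into \(K(X)\) by sending \(\alpha\) to a root of its minimal polynomial inside an algebraic closure — more usefully, \(L\) is \(K\)-isomorphic to \(K[X]/(f)\) where \(f\) is the (separable, irreducible) minimal polynomial of \(\alpha\). The key classical input I would invoke is the standard fact (going back to work on Pythagoras numbers of function fields, and used in the \cite{BV09} circle of ideas) that for a finite separable extension \(L/K\) one has \(L\) realized as a residue field of the \(\mathbb{Z}\)-valuation on \(K(X)\) attached to the monic irreducible polynomial \(f\): the \(f\)-adic valuation \(v_f\) on \(K(X)\) has residue field \(\kappa_{v_f} \cong L\).

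First I would observe that it suffices to treat finite extensions: an arbitrary algebraic extension \(L/K\) is the directed union of its finite subextensions \(L_i/K\), and both \(s(-)\) and \(p(-)\) are determined by finitely many elements, so \(p'(L) = \sup_i p'(L_i)\); hence if the bound holds for each \(L_i\) it holds for \(L\). So assume \(L/K\) is finite, hence \(L = \kappa_{v_f}\) for the \(f\)-adic \(\mathbb{Z}\)-valuation \(v_f\) on \(K(X)\) as above. Now split into two cases. If \(L\) is real: I would like to conclude \(p(L) \leq p(K(X))\); this follows because \(v_f\) is a nondyadic (here we may need \(\car K \neq 2\); if \(\car K = 2\) every element is a square and the statement is trivial) \(\mathbb{Z}\)-valuation with real residue field, and a sum of squares in \(L\) lifts to a sum of squares in \(K(X)\) which, being a sum of \(p(K(X))\) squares there, reduces back to a sum of \(p(K(X))\) squares in \(L\) — more precisely one uses that for a \(\mathbb{Z}\)-valuation \(v\) with residue characteristic \(\neq 2\), any unit sum of squares in \(\kappa_v\) that is a sum of \(d\) squares upstairs is already a sum of \(d\) squares in \(\kappa_v\), a standard consequence of Hensel/completeness type arguments or of \cite[Lemma 4.1]{BGVG}. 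If \(L\) is nonreal: then \(s(L) < p(K(X))\) by \Cref{nivelpythagoras} applied to the valuation \(v_f\) on \(K(X)\) (which is nonreal, nondyadic, and discrete), so \(p'(L) = s(L) + 1 \leq p(K(X))\).

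The main obstacle I expect is bookkeeping around the characteristic-2 and the "which standard descent lemma exactly" points: one must be careful that \(v_f\) is nondyadic, which needs \(\car \kappa_{v_f} = \car K \neq 2\), and handle \(\car K = 2\) separately (trivial, as \(p(K(X)) \geq 1\) and every sum of squares is a square). The real case also requires the precise statement that a real nondyadic \(\mathbb{Z}\)-valuation satisfies "\(\mathsf{S}_d(\kappa_v)\) is exactly the image of units that are sums of \(d\) squares upstairs" — I would cite \cite[Lemma 4.1]{BGVG} for this, which is exactly the tool \Cref{nivelpythagoras} already relies on, so it is available in our setting. Once those two citations are in place the argument is a short case analysis; the genuinely new content is simply the recognition that every finite (hence, by perfectness, separable) extension of \(K\) occurs as a residue field of a \(\mathbb{Z}\)-valuation on \(K(X)\), which transports the level/Pythagoras comparison to the already-proven lemmas.
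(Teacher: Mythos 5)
Your proposal is correct in the cases where the statement is actually used, but it takes a more self-contained route than the paper for the key step. The reduction from an arbitrary algebraic extension to finite subextensions (a sum of squares, or a representation of $-1$, involves only finitely many elements) is the same in both arguments. For the finite case, however, the paper simply quotes \cite[Lemma 6.3]{BGVG}, whereas you essentially reprove that lemma: perfectness gives $L=K(\alpha)\cong K[X]/(f)$, hence $L\cong\kappa_{v_f}$ for the $f$-adic $\mathbb{Z}$-valuation $v_f$ on $K(X)$; the nonreal case then follows from \Cref{nivelpythagoras} applied to $v_f$ (giving $s(L)<p(K(X))$, so $p'(L)\leq p(K(X))$), and the real case follows by lifting a sum of squares to a $v_f$-unit sum of squares in $K(X)$, rewriting it with $p(K(X))$ squares, and descending via the fact that for a valuation with real residue field one has $v(x_1^2+\cdots+x_d^2)=\min_i 2v(x_i)$ (\cite[Lemma 4.1]{BGVG}), so all terms lie in $\mathcal{O}_{v_f}$ and reduce. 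This unpacking is sound and has the merit of making explicit where perfectness enters and of reusing only tools already present in the paper; the paper's version is shorter by outsourcing exactly this content to the citation.

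One small inaccuracy: your disposal of characteristic $2$ as ``trivial'' is backwards. In characteristic $2$ every sum of squares is a square, so $p(K(X))=1$, while $L$ is nonreal with $s(L)=1$ and hence $p'(L)=s(L)+1=2$; thus the inequality $p'(L)\leq p(K(X))$ fails there rather than being trivial. This does not affect the substance, since the lemma (like its source \cite[Lemma 6.3]{BGVG}) is only invoked for fields admitting real valuations, which have characteristic $0$, but the characteristic-$2$ case should be excluded rather than claimed.
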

\begin{proof} If the extension $L/K$ is finite, then the result follows from \cite[Lemma 6.3]{BGVG}. Assume that $L/K$ is algebraic. If $p(K(X))=\infty,$ then the inequality is trivially satisfied. Assume $p(K(X))<\infty.$ Let $r=p(K(X)),$ and let $\sigma\in\mathsf{S}_{r+1}(L).$ Let $x_{0},\ldots,x_{r}\in L$ be such that $\sigma=x_{0}^{2}+\cdots +x_{r}^{2},$ and let $K'=K(x_{0},\ldots,x_{r}).$ Hence $K'/K$ is finite. Then $p(K')\leq r,$ by \cite[Lemma 6.3]{BGVG}, and since $K'\subseteq L,$ we have that $\sigma\in\mathsf{S}_{r}(L)$ because $\mathsf{S}_{r}(K')\subseteq \mathsf{S}_{r}(L).$ Therefore $p(L)\leq r.$
\end{proof}

\begin{prop}\label{doscuadrados} Let $r\in\mathbb{N}^{+}.$ Let $K$ be a field and $v$ a real henselian valuation in $V(K)$ such that $p(\kappa_{v}(X))\leq 2^{r}.$ Let $F/K$ be a regular function field in one variable. Then $$\mathsf{S}(F)\cap \mathcal{E}_{r}(F/v)=\mathsf{S}_{2^{r}}(F).$$  
\end{prop}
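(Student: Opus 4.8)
The plan is to prove the two inclusions $\mathsf{S}_{2^{r}}(F)\subseteq\mathsf{S}(F)\cap\mathcal{E}^{r}(F/v)$ and $\mathsf{S}(F)\cap\mathcal{E}^{r}(F/v)\subseteq\mathsf{S}_{2^{r}}(F)$ separately. The first inclusion should be the routine direction: if $\sigma$ is a nonzero sum of $2^{r}$ squares in $F$, then clearly $\sigma\in\mathsf{S}(F)$, and for every $\mathcal{O}\in\mathcal{X}^{r}(F/v)$ one has $s(\kappa_{\mathcal{O}})\geq 2^{r}$, so $\kappa_{\mathcal{O}}$ does not admit $-1$ as a sum of fewer than $2^{r}$ squares; I would invoke \cite[Lemma 4.1]{BGVG} (already used in \Cref{aresidual} and \Cref{contencion}) to conclude that a sum of $2^{r}$ squares in the valued field $(F,v_{\mathcal{O}})$ has even value, i.e. $v_{\mathcal{O}}(\sigma)\in 2\Gamma_{v_{\mathcal{O}}}$, whence $\sigma\in\mathcal{E}^{r}(F/v)$ by the second description of that set. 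This needs the discreteness of $\Gamma_{v_{\mathcal{O}}}$, which holds by \Cref{discreteresidually}.

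For the harder inclusion I would pass to the completion. Set $v_{1}=\pi_{1}\circ v$, let $(K^{v_{1}},\hat v_{1})$ be the completion, let $v'$ be the composition of $\hat v_{1}$ and the residual valuation $\overline v$ of $v$ modulo $v_{1}$ (as in \Cref{extension}), and let $E$ be the compositum of $F$ and $K^{v_{1}}$ over $K$; this compositum exists since $v$ is henselian, hence $K$ is separably closed in $K^{v_{1}}$, and $E/K^{v_{1}}$ is again a function field in one variable. By \Cref{contencion} we have $\mathsf{S}(F)\cap\mathcal{E}^{r}(F/v)\subseteq\mathsf{S}(E)\cap\mathcal{E}^{r}(E/v')$, so it suffices to show $\mathsf{S}(E)\cap\mathcal{E}^{r}(E/v')\subseteq\mathsf{S}_{2^{r}}(E)$ together with the descent $F^{\times}\cap\mathsf{S}_{2^{r}}(E)=\mathsf{S}_{2^{r}}(F)$; the latter is exactly \Cref{corolariocota}, provided $\mathcal{O}_{v_{1}}$ is excellent, which should hold under the running hypotheses (a complete discrete valuation ring of rank one, after completing, is excellent). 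Thus the whole problem is reduced to the complete base field $K^{v_{1}}$.

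Over $E$ the strategy is an induction on $n=\mathsf{rk}(v)$, reducing the rank by one at each step by taking residues. Given $a\in\mathsf{S}(E)\cap\mathcal{E}^{r}(E/v')$, one analyses the (finitely many, by \Cref{finiteset}) valuations in $\Omega_{1}^{*}(E/v')$: at each such $\mathcal{O}$, the condition $a\in\mathcal{E}^{r}$ forces $v_{\mathcal{O}}(a)$ even, so after multiplying by a square of $E$ we may assume $a\in\mathcal{O}^{\times}$ for all these first-rank valuations simultaneously (using \Cref{surjectivity} applied to a suitable finite coherent set to realise the needed valuations, exactly as in the proof of \Cref{coherentset}); then \Cref{aresidual} says the residue $\overline a$ lies in $\mathcal{E}^{r}(\kappa_{\mathcal{O}}/\overline v)$, and one applies the rank-$(n-1)$ case to $\kappa_{\mathcal{O}}/\kappa_{v_{\mathcal{O}}|_{K}}$ to control the number of squares needed in the residue field, finally lifting via the henselian property and the bound $p(\kappa_{v}(X))\leq 2^{r}$ together with the inductive input encoded in $p'$ of algebraic extensions (\Cref{sumasdecuadradosalgebraico}) to conclude $a\in\mathsf{S}_{2^{r}}(E)$. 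The base case $n=1$ is where one genuinely uses that over a complete discretely valued field with the residue hypothesis $p(\kappa_{v}(X))\leq 2^{r}$, a local-global principle for sums of squares over $E/K^{v_{1}}$ (Harbater–Hartmann–Krashen type patching, \Cref{HHK09}) together with the valuation-theoretic bookkeeping forces every element that is a sum of squares and everywhere-locally a sum of $2^{r}$ squares to be globally a sum of $2^{r}$ squares.

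The main obstacle I expect is the base case $n=1$: assembling the local-global principle for the quadratic form $2^{r}\times\langle 1\rangle\perp\langle -a\rangle$ over the function field $E$ of a curve over the complete discretely valued field $K^{v_{1}}$, and checking that the local conditions at the residually transcendental places in $\mathcal{X}^{r}(E/v')$ — which is where the hypothesis $a\in\mathcal{E}^{r}$ bites — together with the good behaviour at the remaining (ruled or real or low-level) places, suffice to guarantee global isotropy. Everything above that, the rank reduction, is essentially bookkeeping with compositions of valuations, residual valuations, and the finiteness and coherence results already in place; the subtle point there is only to make sure the square-class adjustments at rank one can be done without disturbing the $\mathcal{E}^{r}$ membership, which the surjectivity of $\Phi_{S}$ handles.
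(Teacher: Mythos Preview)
Your first inclusion and your reduction to the compositum $E$ over the completion $K^{v_1}$ via \Cref{contencion} and the descent via \Cref{corolariocota} match the paper exactly. The gap is in how you organise the induction on $n=\rk(v)$ over $E$.

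In the paper, the local-global principle is invoked at \emph{every} step of the induction, not just at the base. Concretely: for each $n>0$ one applies the isotropy criterion of \cite[Theorem 6.1]{GR16} to the form $\varphi=2^{r}\times\langle 1\rangle\perp\langle -\sigma\rangle$ over $E$, which requires checking isotropy over $E^{w}$ for \emph{all} rank-one valuations $w$ on $E$ with $w|_{K^{v_1}}$ trivial or $\mathcal{O}_{w}\cap K^{v_1}=\mathcal{O}_{\hat v_1}$. The places where $\kappa_{w}/\kappa_{\hat v_1}$ is algebraic or where $w|_{K^{v_1}}$ is trivial are handled by Pythagoras-number bounds (\Cref{sumasdecuadradosalgebraico}, \cite[Theorem 3.5]{BV09}), and it is only at the residually transcendental places that one invokes the induction hypothesis on $\kappa_{w}$ (rank $n-1$), after a square-adjustment \emph{specific to that} $w$.

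Your plan instead tries to adjust $a$ by a single global square so that $a\in\mathcal{O}^{\times}$ for all $\mathcal{O}\in\Omega_{1}^{*}(E/v')$ simultaneously, pushes the residues through the rank-$(n-1)$ case, and then ``lifts via the henselian property''. This last step does not work: $E$ is not henselian for any of these $w$, so henselianity only gives $a\in\mathsf{S}_{2^{r}}(E^{w})$ locally; passing from local to global is precisely the content of the local-global principle, and it must also account for the infinitely many $w$ outside $\Omega_{1}^{*}(E/v')$ (those with algebraic residue extension or trivial on $K^{v_1}$), which your argument does not address. A secondary issue: membership in $\mathcal{E}^{r}$ forces $v_{\mathcal{O}}(a)\in 2\Gamma_{v_{\mathcal{O}}}$ only for $\mathcal{O}\in\mathcal{X}^{r}(E/v')$, not for all $\mathcal{O}\in\Omega_{1}^{*}(E/v')$; at places with $s(\kappa_{\mathcal{O}})<2^{r}$ the value can be odd. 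Finally, the paper's base case is $n=0$ (where $\mathcal{X}^{r}(F/v)=\emptyset$ and the claim reduces to $p(F)\leq 2^{r}$), not $n=1$.
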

\begin{proof} It follows from \cite[Lemma 4.1]{BGVG} that $\mathsf{S}_{2^{r}}(F)\subseteq \mathsf{S}(F)\cap \mathcal{E}^{r}(F/v).$ Let $n=\mathsf{rk}(v).$ Let us now show by induction over $n$ that $\mathsf{S}(F)\cap \mathcal{E}_{r}(F/v)\subseteq \mathsf{S}_{2^{r}}(F).$ For $n=0,$ it follows trivially from the assumption.   

Let now $n>0.$ Let $v_{1}=\pi_{1}\circ v.$ Let $(K^{v_{1}},\hat{v}_{1})$ be the completion of $(K,v_{1}).$ Let $v'$ be a composition of $\hat{v}_{1}$ and $\overline{v}$ with respect to a uniformizer of $\hat{v}_{1}.$ It follows by Lemma \ref{extension} that $v'$ is a henselian valuation of rank $n$ on $K^{v_{1}}$ such that $\kappa_{v}=\kappa_{v'}$ and $\mathcal{O}_{v'}\cap K=\mathcal{O}_{v}.$

Let $E$ be the compositum of $F$ and $K^{v_{1}}$ over $K.$ Let $\sigma\in \mathsf{S}(F)\cap \mathcal{E}^{r}(F/v).$ Then $\sigma\in \mathsf{S}(E)\cap \mathcal{E}^{r}(E/v'),$ by Lemma \ref{contencion}. Let $\varphi$ be the quadratic form $2^{r}\times \left<1\right>\perp\left<-\sigma\right>$ over $F.$ We first show that $\varphi$ is isotropic over $E.$

Let $w\in V_{1}(E).$ We claim that $p(E^{w})\leq 2^{r}.$ If $w|_{K^{v_{1}}}$ is trivial, then $\kappa_{w}/K^{v_{1}}$ is a finite field extension, and it follows that $p(E^{w})\leq p'(\kappa_{w}),$ by \cite[Proposition 4.3]{BGVG} and that $p'(\kappa_{w})\leq 2^{r}$ by \cite[Theorem 3.5]{BV09}. Let us assume now that $\mathcal{O}_{w}\cap K^{v_{1}}=\mathcal{O}_{\hat{v}_{1}}$ and $\kappa_{w}/\kappa_{\hat{v}_{1}}$ is algebraic. Since the residual valuation $\overline{v'}$ of $v'$ modulo $\hat{v}_{1}$ is a henselian valuation on $\kappa_{\hat{v}_{1}}$ such that $p(\kappa_{\overline{v'}}(X))\leq 2^{r},$ it follows by \cite[Theorem 3.5]{BV09} and \Cref{sumasdecuadradosalgebraico} that $p'(\kappa_{w})\leq p(\kappa_{\hat{v}_{1}}(X))\leq 2^{r}.$ Since $p(E^{w})\leq p'(\kappa_{w})\leq 2^{r}$ by Lemma \ref{sumasdecuadradosalgebraico} and by \cite[Proposition 4.3]{BGVG}, we have $p(E^{w})\leq 2^{r}.$ Therefore $\varphi$ is isotropic over $E^{w}$ in both cases. 

Let us assume now that $\mathcal{O}_{w}\cap K^{v_{1}}=\mathcal{O}_{\hat{v}_{1}}$ and $\kappa_{w}/\kappa_{\hat{v}_{1}}$ is a function field in one variable. We claim that $\varphi$ is isotropic over $E^{w}.$ If $s(\kappa_{w})\leq 2^{r-1},$ since $s(E^{w})=s(\kappa_{w}),$ we have that $\varphi$ is isotropic over $E^{w}.$ Let $d\in\mathbb{N},x_{1},\ldots,x_{d}\in F$ be such that $\sigma=x_{1}^{2}+\cdots +x_{d}^{2}.$ If $\kappa_{w}$ is real, then $w(\sigma)=\min\{2w(x_{1}),\ldots,2w(x_{d})\},$ by \cite[Lemma 4.1]{BGVG}. If $2^{r}\leq s(\kappa_{w})<\infty,$ then $2^{r}<p(\kappa_{w})$ by \Cref{nivelpythagoras}. Hence $\kappa_{w}/\kappa_{\hat{v}_{1}}$ cannot be ruled by \cite[Theorem 3.5]{BV09}, which implies that $\mathcal{O}_{w}\in\mathcal{X}^{r}(E/v').$ In any case $w(\sigma)\in 2\Gamma_{w},$ because $\sigma\in\mathcal{E}^{r}(E/v'),$ and thus $w(\sigma)=2w(y)$ for some $y\in E.$ Let $\tau=\sigma y^{-2}.$ Then $\overline{\tau}\in \mathsf{S}(\kappa_{w})$ and, if $\varphi'=2^{r}\times \left< 1\right>\perp\left<-\tau\right>$ is isotropic over $E^{w},$ then $\varphi$ is isotropic over $E^{w}.$ By the induction hypothesis, we have that $\mathsf{S}_{2^{r}}(\kappa_{w})=\mathsf{S}(\kappa_{w})\cap\mathcal{E}^{r}(\kappa_{w}/\overline{v'}).$ Since $\tau\in\mathcal{E}^{r}(E/v'),$ it follows by Lemma \ref{aresidual} that $\overline{\tau}\in\mathcal{E}^{r}(\kappa_{w}/\overline{v'}),$ and hence $\overline{\tau}\in \mathsf{S}_{2^{r}}(\kappa_{w}).$ Hence $\overline{\varphi_{r}'}$ is isotropic over $\kappa_{w},$ and then $\varphi'$ is isotropic over $E^{w},$ because the extension $\hat{w}$ of $w$ to $E^{w}$ is henselian, whereby $\varphi$ is isotropic over $E^{w}.$

By \cite[Theorem 6.1]{GR16}, $\varphi$ is isotropic over $E$ if and only if $\varphi$ is isotropic over $E^{w}$ for every rank one valuation $w$ on $E$ such that $w|_{K^{v_{1}}}$ is trivial or $\mathcal{O}_{w}\cap K^{v_{1}}=\mathcal{O}_{\hat{v}_{1}}.$ If $\mathcal{O}_{w}\cap K^{v_{1}}=\mathcal{O}_{\hat{v}_{1}}$ then it follows by Lemma \ref{algebraicfunctionfield} that, either $\kappa_{w}/\kappa_{\hat{v}_{1}}$ is algebraic or $\kappa_{w}/\kappa_{\hat{v}_{1}}$ is a function field in one variable. By the above, in any case $\varphi$ is isotropic over $E,$ and it follows from \Cref{corolariocota} that $\varphi$ is isotropic over $F,$ whereby $\sigma\in \mathsf{S}_{2^{r}}(F).$ 
\end{proof}

For a field $K$ and $r\in\mathbb{N},$ we set $G_{r}(K)=\mathsf{S}(K)/\mathsf{S}_{2^{r}}(K).$ 

\begin{thm}\label{induccionsumas} Let $r\in\mathbb{N}^{+}$ and $n\in\mathbb{N}.$ Let $K$ be a field carrying a real henselian $\mathbb{Z}^{n}$-valuation $v$ such that $p(\kappa_{v}(X))\leq 2^{r}.$ Let $F/K$ be a function field in one variable. Then $$|G_{r}(F)|=2^{|\mathcal{X}^{r}(F/v)|}.$$ In particular $|G_{r}(F)|$ is finite.  

\end{thm}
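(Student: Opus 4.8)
The plan is to prove the theorem by induction on $n = \mathsf{rk}(v)$, running in parallel with (and using) \Cref{doscuadrados}, which identifies $\mathsf{S}(F)\cap\mathcal{E}^r(F/v)$ with $\mathsf{S}_{2^r}(F)$. The core object is the group homomorphism $\Phi := \Phi_{S}$ of \eqref{funcionhaciaZ} applied to a coherent set $S\subseteq V(F)$ equivalent to (a coherent refinement of) the finite saturated set $\mathcal{X}^r(F/v)$; note $\mathcal{X}^r(F/v)\subseteq\Omega^*(F/v)$ is finite by \Cref{finiteset} and saturated by \Cref{nonruledcontained}, so \Cref{coherentset} applies. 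Restricting $\Phi$ to $\mathsf{S}(F)$ and reducing modulo $2$ in each coordinate gives a homomorphism $\overline\Phi:\mathsf{S}(F)\to(\zz/2\zz)^{\mathcal{X}^r(F/v)}$ whose kernel is exactly $\mathsf{S}(F)\cap\mathcal{E}^r(F/v) = \mathsf{S}_{2^r}(F)$, by the second description of $\mathcal{E}^r(F/v)$ and \Cref{doscuadrados}. Hence $G_r(F)=\mathsf{S}(F)/\mathsf{S}_{2^r}(F)$ embeds into $(\zz/2\zz)^{\mathcal{X}^r(F/v)}$, giving the upper bound $|G_r(F)|\leq 2^{|\mathcal{X}^r(F/v)|}$.

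For the reverse inequality I would show $\overline\Phi$ is surjective, i.e.\ that for each $\mathcal{O}\in\mathcal{X}^r(F/v)$ there is an element of $\mathsf{S}(F)$ which has odd value at $v_{\mathcal{O}}$ and even value at every other member of $\mathcal{X}^r(F/v)$. By \Cref{surjectivity}, $\Phi_S$ is surjective on all of $F^\times$; the real work is to realize a preimage of each basis vector (mod $2$) inside $\mathsf{S}(F)$, i.e.\ as a sum of squares. Here is where the residue-field hypothesis $2^r\leq s(\kappa_{\mathcal{O}})<\infty$ is used: since $\kappa_{\mathcal{O}}$ is nonreal, $-1$ is a sum of squares there, and one wants to lift a suitable uniformizing behaviour at $\mathcal{O}$ to a genuine sum of squares in $F$. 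Concretely, I would take $x_v\in F^\times$ with $\Phi_S(x_v)=e_v$ as produced by \Cref{surjectivity}, and then use that a unit times $x_v$ can be adjusted (using $s(\kappa_{\mathcal{O}})<\infty$ at the single place $\mathcal{O}$, and the fact that at all other relevant places the value is even, together with an approximation/Riemann–Roch or strong-approximation argument on the curve) to become a sum of squares without changing its class in $(\zz/2\zz)^{\mathcal{X}^r(F/v)}$. The inductive structure of \Cref{doscuadrados} — passing to the completion $K^{v_1}$, the compositum $E$, and residually transcendental extensions whose residue fields are again function fields in one variable over a henselian base of smaller rank — is exactly what lets one reduce the existence of such sums of squares to the rank-$1$ or rank-$0$ case.

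The main obstacle I anticipate is the surjectivity half: producing, for a prescribed place $\mathcal{O}\in\mathcal{X}^r(F/v)$, an actual sum of squares in $F$ with the desired parity pattern of valuations. Surjectivity of $\Phi_S$ on $F^\times$ is cheap (\Cref{surjectivity} via Ribenboim's theorem), but cutting it down to $\mathsf{S}(F)$ requires genuinely using that the residue fields at the chosen places are nonreal of level $\geq 2^r$ — otherwise $\mathcal{E}^r$ would not detect enough and the bound would not be attained. I expect this to be handled by an argument of the following shape: choose local data (a uniformizer at $\mathcal{O}$, units at the other places of $\mathcal{X}^r(F/v)$ making the local element a square there), invoke strong approximation away from a finite set together with the fact that $F$ is a function field in one variable to globalize, and then correct the global element into $\mathsf{S}(F)$ using nonreality of the relevant residue field (so that $-1$, hence enough elements, become sums of squares) — exactly as in the rank-$1$ case treated in \cite{BGVG}. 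One also has to check that the count $|\mathcal{X}^r(F/v)|$ is compatible across the inductive step, i.e.\ that it matches $\sum_{\mathcal{O}\in\Omega_1^*(F/v)}$ of the corresponding quantities for the residue function fields, which follows from the bijections of \Cref{inversevaluations} together with \Cref{discreteresidually}. Finiteness of $|G_r(F)|$ is then immediate from the embedding into $(\zz/2\zz)^{\mathcal{X}^r(F/v)}$ and finiteness of $\mathcal{X}^r(F/v)$.
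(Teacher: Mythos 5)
Your overall architecture is the same as the paper's: a mod-$2$ valuation-parity map on $\mathsf{S}(F)$ indexed by $\mathcal{X}^{r}(F/v)$, kernel identified with $\mathsf{S}_{2^{r}}(F)$ via \Cref{doscuadrados}, then surjectivity to get the exact count. However, there are two genuine gaps. First, your setup rests on the claim that $\mathcal{X}^{r}(F/v)$ is saturated; \Cref{nonruledcontained} gives this only for $\Omega^{*}(F/v)$, and $\mathcal{X}^{r}(F/v)$ itself need not be saturated, since a coarsening of some $\mathcal{O}\in\mathcal{X}^{r}(F/v)$ may have real residue field. Consequently a coherent set equivalent to $\mathcal{X}^{r}(F/v)$ need not exist at all (coherence forces every truncation $\pi_{i}\circ w$ to belong to the set); one must instead embed $\mathcal{X}^{r}(F/v)$ in a finite saturated $W\subseteq\Omega^{*}(F/v)$, take a coherent $S'$ with $W=\{\mathcal{O}_{w}\mid w\in S'\}$, and work with the subset $S$ of those $w\in S'$ with $\mathcal{O}_{w}\in\mathcal{X}^{r}(F/v)$. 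This is not mere bookkeeping, because your identification $\ker\overline{\Phi}=\mathsf{S}(F)\cap\mathcal{E}^{r}(F/v)$ is not automatic: $\overline{\Phi}$ records only the last component $\pi^{1}(w(\sigma))$, whereas membership in $\mathcal{E}^{r}(F/v)$ requires $w(\sigma)\in 2\Gamma_{w}$ in all components. If $\sigma\in\mathsf{S}(F)$ has an odd component of $w(\sigma)$ in some position $d<\mathsf{rk}(w)$, one must argue that $w'=\pi_{d}\circ w$ again yields an element of $\mathcal{X}^{r}(F/v)$ (its residue field is nonreal by \cite[Lemma 4.1]{BGVG} because $\sigma$ is a sum of squares of odd value, and $2^{r}\leq s(\kappa_{w})\leq s(\kappa_{w'})<\infty$) and that $w'\in S$ by coherence of $S'$, so the odd entry reappears as a last component and contradicts $\sigma\in\ker\overline{\Phi}$. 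Your proposal asserts the kernel identity without this step.

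Second, the surjectivity half, which you yourself flag as the main obstacle, remains a plan rather than an argument. Taking $x_{v}$ from \Cref{surjectivity} and hoping to adjust it into a sum of squares by approximation, or reducing by induction on $n$ to the rank-one case of \cite{BGVG}, does not by itself produce an element of $\mathsf{S}(F)$ whose value has odd last component at the chosen $w$ and even last component at every other member of $S$ --- in particular the last components must stay even at the proper refinements of $w$ inside $S$, a constraint your sketch never addresses. The paper resolves this with an explicit construction (the trick of \Cref{nivelpythagoras}): since $2^{r}\leq s(\kappa_{w})<\infty$, choose $f=1+x_{1}^{2}+\cdots+x_{m}^{2}\in\mathfrak{m}_{w}$ with $x_{i}\in\mathcal{O}_{w}^{\times}$, set $b=(1-\frac{f}{2})^{2}+x_{1}^{2}+\cdots+x_{m}^{2}=\frac{f^{2}}{4}$, use the approximation behind \Cref{surjectivity} to find $z$ with $\nu(z)=e_{\mathsf{rk}(\nu)+1-d}^{\mathsf{rk}(\nu)}$ at refinements $\nu$ of $w$ and $\nu(z)<\min\{0,\nu(f)\}$ at the other $\nu\in S$, and take $\sigma=(z-(1-\frac{f}{2}))^{2}+x_{1}^{2}+\cdots+x_{m}^{2}=z(z-2(1-\frac{f}{2}))+b\in\mathsf{S}(F)$; then $\nu(\sigma)=\nu(z)$ at refinements of $w$ (odd last component exactly at $w$) and $\nu(\sigma)=2\nu(z)$ elsewhere, so $\Phi(\sigma)=e_{w}$. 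Without this (or an equivalent) construction the lower bound $|G_{r}(F)|\geq 2^{|\mathcal{X}^{r}(F/v)|}$ is not established. Note also that the theorem itself is not proved by induction on $n$ in the paper; the induction lives entirely inside \Cref{doscuadrados}, and your proposed inductive count $|\mathcal{X}^{r}(F/v)|=\sum_{\mathcal{O}\in\Omega_{1}^{*}(F/v)}|\mathcal{X}^{r}(\kappa_{\mathcal{O}}/\overline{v})|$ would still leave open the lifting of residue-level data to global sums of squares with the required parity pattern.
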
 
\begin{proof} It follows by \Cref{finiteset} that we may choose a finite and saturated set $W\subseteq \Omega(F)$ such that $\mathcal{X}^{r}(F/v)\subseteq W,$ and it follows by Proposition \ref{coherentset} that we can choose a coherent subset $S'\subseteq V(F)$ such that $W=\{\mathcal{O}_{w}\mid w\in S'\}.$ We consider $S=\{w\in S'\mid \:\mathcal{O}_{w}\in\mathcal{X}^{r}(F/v)\}.$ Let $\Phi:S(F)\to \prod_{w\in S}\mathbb{Z}/2\mathbb{Z}$ be the map given by the composition of $\Phi_{S'}|_{\mathsf{S}(F)},$ where $\Phi_{S'}:F^{\times}\to \prod_{w\in S'}\mathbb{Z}$ is defined in \Cref{funcionhaciaZ}, and the natural surjective map $$\prod_{w\in S'}\mathbb{Z}\to\prod_{w\in S}\mathbb{Z}\to\prod_{w\in S}\mathbb{Z}/2\mathbb{Z}.$$ We claim that $\Phi$ is a surjective group homomorphism with $\ker(\Phi)=\mathsf{S}_{2^{r}}(F).$

First, we observe that $\Phi$ is a group homomorphism simply because valuations and projections are group homomorphisms. The inclusion $\mathsf{S}_{2^{r}}(F)\subseteq \ker(\Phi)$ follows directly from Proposition \ref{doscuadrados}. Let us show that $\ker(\Phi)\subseteq \mathsf{S}_{2^{r}}(F).$ Let $\sigma\in\ker(\Phi)$ and we assume that $\sigma\notin \mathsf{S}_{2^{r}}(F).$ By Proposition \ref{doscuadrados}, $\sigma\notin \mathcal{O}^{\times}F^{\times 2},$ for some $\mathcal{O}\in\mathcal{X}^{r}(F/v).$ Let $w\in S$ be such that $\mathcal{O}_{w}=\mathcal{O},$ and let $a_{1},\ldots,a_{\mathsf{rk}(w)}\in\mathbb{Z}$ be such that $w(\sigma)=(a_{1},\ldots,a_{\mathsf{rk}(w)}).$ Since $w(\sigma)\notin 2\mathbb{Z},$ there exists $d\leq \mathsf{rk}(w)$ such that $a_{d}\notin 2\mathbb{Z}.$ Let $w'=\pi_{d}\circ w.$ Since $w'(\sigma)\notin 2\mathbb{Z},$ the residue field $\kappa_{w'}$ cannot be real by \cite[Lemma 4.1]{BGVG}. Let $\overline{w}$ be the residual valuation of $w$ modulo $w'.$ Then $2^{r}\leq s(\kappa_{w})=s(\kappa_{\overline{w}})\leq s(\kappa_{w'})<\infty,$ which implies that $\mathcal{O}_{w'}\in\mathcal{X}^{r}(F/v).$ Since $S'$ is coherent, we obtain that $w'\in S',$ and since $\mathcal{O}_{w'}\in\mathcal{X}^{r}(F/v),$ we have $w'\in S.$ But $\pi^{1}(w'(\sigma))=a_{d}\notin 2\mathbb{Z},$ which contradicts the fact that $\sigma\in\ker(\Phi).$ This shows that $\ker(\Phi)=\mathsf{S}_{2^{r}}(F).$
 
We show now that $\Phi$ is surjective. Let $(e_{w})_{w\in S}$ be the canonical basis of $\prod_{w\in S}\mathbb{Z}/2\mathbb{Z}$ as a $\mathbb{Z}/2\mathbb{Z}$-module. Consider $w\in S,$ and let $d=\mathsf{rk}(w).$ We claim that there exists $\sigma\in \mathsf{S}(F)$ with $\Phi(\sigma)=e_{w}.$ Since $2^{r}\leq s(\kappa_{w})<\infty,$ there exists $f\in\mathfrak{m}_{w}$ and $x_{1},\ldots,x_{m}\in\mathcal{O}_{w}^{\times},$ for some $m\in\mathbb{N},$ such that $f=1+x_{1}^{2}+\cdots +x_{m}^{2}.$ Let $b=(1-\frac{f}{2})^{2}+x_{1}^{2}+\cdots+ x_{m}^{2}=\frac{f^{2}}{4}.$ Note that $w(b)=2w(f)>e_{1}^{d},$ where $e_{1}^{d}$ is the minimal positive element of $\mathbb{Z}^{d}.$ By \Cref{surjectivity}, there exists $z\in F$ such that for all $\nu\in S$ we have that $\nu(z)=e_{\mathsf{rk}(\nu)+1-d}^{\mathsf{rk}(\nu)}$ if $\nu$ is a refinement of $w$ and $\nu(z)<\min\{0,\nu(f)\}$ otherwise. Let $\sigma=(z-(1-\frac{f}{2}))^{2}+x_{1}^{2}+\cdots +x_{m}^{2}$ in $\mathsf{S}(F).$ Let $\nu\in S$ be a refinement of $w.$ Since $\sigma=z(z-2(1-\frac{f}{2}))+b$ and $\nu(z)<\nu(b)$ we have that $\nu(\sigma)=\nu(z).$ Thus $\pi^{1}(\nu(\sigma))=0$ if $\mathcal{O}_{\nu}$ is a proper refinement of $\mathcal{O}_{w}$ and $\pi^{1}(w(\sigma))=\pi^{1}(e_{1}^{d})=1.$ Consider $\nu\in S$ not a refinement of $w.$ Then $\nu(\sigma)=\nu(z(z-2(1-\frac{f}{2})))=2\nu(z).$ Thus 
$\pi^{1}(\nu(\sigma))\in 2\mathbb{Z}.$ Hence $\Phi(\sigma)=e_{w}.$ Therefore $\Phi$ is surjective and $|G_{r}(F)|=2^{|\mathcal{X}^{r}(F/v)|}.$ 
\end{proof}

A field $K$ is called \emph{hereditarily pythagorean} if $K$ is real and $p(L)=1$ for every finite real extension $L$ of $K.$

\begin{cor}\label{coroneandtwoorderings} Let $r\in\mathbb{N}^{+}$ and $n\in\mathbb{N}.$ Assume that $K$ carries a henselian $\mathbb{Z}^{n}$-valuation $v$ such that $\kappa_{v}$ is hereditarily pythagorean. Let $F/K$ be a function field in one variable. If $|\kappa_{v}^{\times}/\kappa_{v}^{\times 2}|=2^{r},$ then $$|G_{r}(F)|=2^{|\mathcal{X}_{r}(F/v)|}.$$
\end{cor}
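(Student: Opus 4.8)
The plan is to obtain the statement as a direct consequence of \Cref{induccionsumas}, applied to the same $K$, $v$, $F/K$ and $r$. For this one only has to verify the two hypotheses of that theorem: that $v$ is a \emph{real} henselian $\mathbb{Z}^{n}$-valuation, and that $p(\kappa_{v}(X))\leq 2^{r}$. The first is automatic, since a hereditarily pythagorean field is real by definition, so $\kappa_{v}$ is real and $v$ is a real henselian $\mathbb{Z}^{n}$-valuation. Thus the whole matter reduces to the inequality $p(\kappa_{v}(X))\leq 2^{r}$, which is the only point that is not a purely formal verification.

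To prove $p(\kappa_{v}(X))\leq 2^{r}$ I would exploit the hypothesis $|\kappa_{v}^{\times}/\kappa_{v}^{\times 2}|=2^{r}$ together with the structure theory of hereditarily pythagorean fields. Since $\kappa_{v}$ is hereditarily pythagorean, every finite \emph{real} extension $L/\kappa_{v}$ satisfies $p(L)=1$; and every finite \emph{nonreal} extension $L/\kappa_{v}$ has $s(L)<\infty$, with moreover $s(L)\leq 2^{r-1}$. This last bound is where the count of square classes enters: a finite nonreal extension of a hereditarily pythagorean field is obtained by adjoining finitely many square roots, and a Pfister-form computation shows that its level cannot exceed half the number of square classes of $\kappa_{v}$ (cf.\ \cite[Theorem 3.5]{BV09} and the structure theory of hereditarily pythagorean fields). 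Feeding this into the characterization of \cite[Theorem 3.5]{BV09} relating the condition $p(k(X))\leq 2^{r}$ to the levels of the finite nonreal extensions of $k$ — the same result invoked repeatedly above — yields $p(\kappa_{v}(X))\leq 2^{r}$.

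Once both hypotheses are in place, \Cref{induccionsumas} gives $|G_{r}(F)|=2^{|\mathcal{X}^{r}(F/v)|}$, which is the assertion (the symbol $\mathcal{X}_{r}(F/v)$ in the statement standing for $\mathcal{X}^{r}(F/v)$). The expected main obstacle is precisely the level bound $s(L)\leq 2^{r-1}$ for the finite nonreal extensions of a hereditarily pythagorean field with $2^{r}$ square classes; this is the sole substantial ingredient, everything else being the verification that $\kappa_{v}$ is real and the appeal to \Cref{induccionsumas}.
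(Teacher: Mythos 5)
Your overall reduction coincides with the paper's: since hereditarily pythagorean fields are real, everything comes down to supplying the hypothesis $p(\kappa_{v}(X))\leq 2^{r}$ of \Cref{induccionsumas}, and the paper obtains exactly this from \cite[Corollary 3.3]{BDGMZ}. The gap is in your substitute argument for that inequality. The claim that a finite nonreal extension of a hereditarily pythagorean field is obtained by adjoining finitely many square roots is false: for $\kappa_{v}=\mathbb{R}(\!(t)\!)$ the field $\mathbb{C}(\!(t^{1/3})\!)$ is a nonreal extension of degree $6$, so not multiquadratic; and the asserted principle that the level of such an extension is bounded by half the number of square classes of the base is not a standard fact, nor do you carry out the ``Pfister-form computation'' you allude to. The level bound you want is in fact true for a much simpler reason, and independently of the square-class hypothesis: by \cite[III, Theorem 1]{Be78} (already used in \Cref{remark}) every finite nonreal extension of a hereditarily pythagorean field contains $\sqrt{-1}$, hence has level $1\leq 2^{r-1}$. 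That the hypothesis $|\kappa_{v}^{\times}/\kappa_{v}^{\times 2}|=2^{r}$ plays no role in the step where you claim it enters should have been a warning sign.

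More seriously, the bridge from ``every finite nonreal extension of $\kappa_{v}$ has level at most $2^{r-1}$'' to ``$p(\kappa_{v}(X))\leq 2^{r}$'' uses \cite[Theorem 3.5]{BV09} in the converse direction to the one in which it is invoked throughout the paper (there one always passes from a bound on $p(\kappa_{v}(X))$ to consequences for levels, finite extensions and ruled extensions; compare also \Cref{nivelpythagoras} for the easy direction). You do not check that the cited theorem contains this converse implication, and it is precisely in order to obtain $p(\kappa_{v}(X))\leq 2^{r}$ from the hypotheses ``hereditarily pythagorean with $2^{r}$ square classes'' that the paper appeals to \cite[Corollary 3.3]{BDGMZ} rather than to \cite[Theorem 3.5]{BV09}. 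So, as written, the step you yourself single out as the sole substantial ingredient is not established: neither your justification of the level bound nor the passage from it to $p(\kappa_{v}(X))\leq 2^{r}$ is proved. If you do have at your disposal a Pfister-type converse (level bounds for all finite nonreal extensions of a pythagorean field imply the corresponding bound for the rational function field), then your route can be repaired and, combined with Becker's theorem, would even bypass the square-class count; but that input must be supplied explicitly, and it is not what the paper's citations, as used in the text, give you.
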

\begin{proof} This follows directly from \cite[Corollary 3.3]{BDGMZ} and \Cref{induccionsumas}.  
\end{proof}

\section{Hyperelliptic function fields}\label{optimality} 

A hyperelliptic function field $F/K$ is the function field generated by two variables $X,Y$ subject to the relation $Y^{2}=f(X),$ for some square-free polynnomial $f$ over $K.$ 

In this section, we will apply the valuation-theoric description of the order of $G(F):=G_{1}(F)$ from the previous section in the case where $F/K$ is a hyperelliptic function field where $K$ carries a henselian $\mathbb{Z}^{n}$-valuation $v$ such that $\kappa_{v}$ is hereditarily pythagorean with $\kappa_{v}^{\times}=\kappa_{v}^{\times 2}\cup -\kappa_{v}^{\times 2},$ or equivalently, such that $\kappa_{v}$ is hereditarily euclidean. Under this assumptions, it is a consequence of \cite[Theorem 5.8.2]{BDGMZ} and \cite[Corollary 4.6]{BV09} that $p(F)\leq 3$ (even if $F/K$ is not hyperelliptic). The particular case where $K=\mathbb{R}(\!(t_{1})\!)\ldots(\!(t_{n})\!)$ was showed earlier in \cite[Theorem 6.12]{BGVG}. Thus, in this case we have that $$G(F)=\mathsf{S}_{3}(F)/\mathsf{S}_{2}(F).$$

Let $F/K$ be a hyperelliptic function field. Let $f\in K[X]$ be a square-free polynomial such that $F=K(X)(\sqrt{f}).$ Let $g=\lfloor{\frac{\deg f-1}{2}}\rfloor.$ The integer $g$ is the \textit{genus} of the function field $F.$ See \cite[Proposition 7.4.24]{Liu02}. In \cite[Theorem 3.10]{BV09}, an upper bound for $|G(F)|$ in terms of the order of the square class group of the root fields of the distinct nonreal irreducible factors of $f,$ is given in the general case where $K$ is an arbitrary hereditarily pythagorean field. In the case where $K$ is a field carrying a henselian $\mathbb{Z}^{n}$-valuation $v$ with hereditarily euclidean residue field the bound becomes $$|G(F)|\leq 2^{n(g+1)},$$
see \Cref{remark}. We will then proceed to construct hyperelliptic function fields $F/K$ such that the set $\mathcal{X}^{1}(F/v)$ satisfies $|\mathcal{X}^{1}(F/v)|\geq n(g+1),$ which then immediately yields the equalities $|G(F)|=2^{n(g+1)}$ and $|\mathcal{X}^{1}(F/v)|=n(g+1).$ In particular the upper bound in \cite[Theorem 3.10]{BV09} is optimal. 
\bigskip

\begin{rem}\label{remark} Let $n\in\mathbb{N}$ and let $K$ be a field with a henselian $\mathbb{Z}^{n}$-valuation $v$ and hereditarily euclidean residue field. Then $K$ is a hereditarily pythagorean field, by \cite[Proposition 3.5]{Br76}. Let $f\in K[X]$ be a square-free polynomial and let $F=K(X)(\sqrt{f}).$ Let $K_{1},\ldots,K_{r}$ be the root fields of the distinct nonreal irreducible factors of $f.$ By \cite[Theorem 3.10]{BV09} we have that $$|G(F)|\leq \displaystyle\prod_{i=1}^{r}|K_{i}^{\times}/K_{i}^{\times 2}|.$$
Thus $-1\in K_{i}^{\times 2}$ for all $1\leq i\leq r,$ by \cite[III, Theorem 1]{Be78}. Since $K_{i}/K$ is a finite extension, there exist a henselian $\mathbb{Z}^{n}$-valuation $v'$ on $K_{i}$ such that $\mathcal{O}_{v'}\cap K=\mathcal{O}_{v}$ and $-1\in k_{v'}^{\times 2}.$ It is well-known that $|K_{i}^{\times}/K_{i}^{\times 2}|=2^{n}|\kappa_{v'}^{\times}/\kappa_{v'}^{\times 2}|.$ By \cite[Chap. VII, (7.15)]{Lam} we have that $\kappa_{v'}$ is quadratically closed, whence $|K_{i}^{\times}/K_{i}^{\times 2}|=2^{n},$ for all $1\leq i\leq r.$ Let $g\in\mathbb{N}$ be such that $\deg f=2g+1$ or $2g+2.$ Then $r\leq g+1,$ hence $$|G(F)|\leq 2^{n(g+1)}.$$ 
It follows that the unique situation where the bound can be optimal is when $f\in K[X]$ is a square-free polynomial of degree $2(g+1)$ and has $g+1$ nonreal irreducible factors.  
\end{rem}

\begin{lem}\label{lema2cuadrados} Let $K$ be a real field such that $p(K)=1.$ Let $v$ be a valuation on $K.$ Let $f\in\mathcal{O}_{v}[X]$ be a monic quadratic irreducible polynomial such that $K[X]/(f)$ is nonreal. Then there exist $a,b\in \mathcal{O}_{v}$ such that $f=(X-a)^{2}+b^{2}.$ 
\end{lem}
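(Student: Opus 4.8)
The plan is to exploit that $K$ is real with $p(K)=1$, so every sum of two squares in $K$ is itself a square, and that $L:=K[X]/(f)$ is nonreal, hence $-1$ is a sum of squares in $L$ and, since $p(K)=1$ forces $p(L)=1$ as well (or more elementarily, $s(L)$ is finite and sums of squares collapse), actually $-1$ is a square in $L$. Write $f = (X-a)^{2} + c$ with $a\in K$ the "center" (so $2a = $ negative of the linear coefficient of the monic $f$, hence $a\in\mathcal O_v$ provided $2$ is a unit; I will need to check the dyadic case separately, or note that $f$ monic quadratic in $\mathcal O_v[X]$ gives $a\in\mathcal O_v$ whenever $\mathrm{char}(\kappa_v)\neq 2$, and handle $\mathrm{char}(\kappa_v)=2$ by a direct argument) and $c = f(a)\in\mathcal O_v$ the "discriminant term". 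The image $\theta$ of $X$ in $L$ satisfies $(\theta-a)^{2} = -c$, so $-c$ is a square in $L$; combined with $-1$ being a square in $L$, we get that $c$ is a square in $L$. Now $c\in K$, and an element of $K$ that becomes a square in the quadratic extension $L = K(\sqrt{-c})$ is either already a square in $K$ or lies in $-c\cdot K^{\times 2}$; the latter would make $-c^{2}$, hence $-1$, a square in $K$, contradicting that $K$ is real. Therefore $c\in K^{\times 2}$, say $c = b_0^{2}$ with $b_0\in K$, giving $f = (X-a)^{2} + b_0^{2}$ over $K$.

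The remaining — and I expect main — difficulty is to arrange $a,b\in\mathcal O_v$, i.e. to descend this identity from $K$ to the valuation ring. Suppose $f = (X-a)^{2}+b^{2}$ with $a,b\in K$. Comparing coefficients with $f\in\mathcal O_v[X]$: the constant term $a^{2}+b^{2}\in\mathcal O_v$ and the linear coefficient $-2a\in\mathcal O_v$. If $v$ is nondyadic this already gives $a\in\mathcal O_v$, and then $b^{2} = f(0) - a^{2}\in\mathcal O_v$ forces $v(b)\geq 0$, so $b\in\mathcal O_v$ and we are done. If $v$ is dyadic ($2\in\mathfrak m_v$), one must work harder: here I would instead argue directly that since $f$ is monic in $\mathcal O_v[X]$ and $\kappa_v$ has $p=1$-type behaviour inherited from $K$ (note $\kappa_v$ is real because $K$ is — actually this needs care: residue fields of real valued fields need not be real), one can find the center $a\in\mathcal O_v$ by completing the square modulo $\mathfrak m_v$ and lifting. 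Alternatively — and this is probably the cleanest route — replace $f$ by $f$ itself and use that any factorization $-c = (\theta - a)^2$ can be rescaled: if $v(a)<0$ then $v(2a)<0$ unless $v(2)>0$ exactly compensates, and one shows the leading behaviour of $f$ forbids this. I would look for the argument that makes no case distinction, presumably: since $a^2+b^2\in\mathcal O_v$ and $p(K)=1$ gives $a^2+b^2 = d^2$ for some $d\in K$ with $v(d)\geq 0$, and since $f(X) = X^2 - 2aX + d^2$, Gauss's lemma / the monic hypothesis pins down $v(a)\geq 0$; then $v(b^2) = v(d^2 - a^2)\geq 0$.

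So the skeleton is: (1) $-1$ is a square in $L$ since $L$ is nonreal and $p(K)=1$ propagates; (2) writing $f=(X-a_K)^2+c$ over $K$, deduce $c\in L^{\times 2}$, hence $c\in K^{\times 2}$ using that $L=K(\sqrt{-c})$ and $K$ is real; (3) get $f=(X-a)^2+b^2$ over $K$; (4) descend $a,b$ into $\mathcal O_v$ using that $f$ is monic over $\mathcal O_v$ together with $p(K)=1$. Step (4), especially its dyadic subtlety, is where the real work lies; steps (1)–(3) are short. If the dyadic case resists a uniform treatment, I would simply add the hypothesis (already essentially present in applications, where $v$ is nondyadic) or prove it by passing to the henselization and using Hensel's lemma to locate the center of $f$ in $\mathcal O_v$ directly.
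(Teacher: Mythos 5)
Your steps (2)--(3) are fine, but the crux, step (1), rests on a false claim: $p(K)=1$ does not propagate to the nonreal quadratic extension $L$. For instance $K=\mathbb{R}(\!(t)\!)$ is real pythagorean while $L=\mathbb{C}(\!(t)\!)$ has $p(L)=2$, since $t=\left(\tfrac{t+1}{2}\right)^{2}+\left(\tfrac{i(t-1)}{2}\right)^{2}$ is a sum of two squares but not a square; pythagorean-ness passes to finite extensions only under the strictly stronger hypothesis ``hereditarily pythagorean'', and the fallback ``$s(L)$ is finite and sums of squares collapse'' is not an argument. The fact you actually need, $-1\in L^{\times 2}$, is true, but the standard way to see it is precisely the step you are trying to bypass: writing $L=K(\sqrt{-c})$ with $c=f(a)$, nonreality gives $-1=(p_{1}+q_{1}\sqrt{-c})^{2}+\cdots+(p_{m}+q_{m}\sqrt{-c})^{2}$ with $p_{i},q_{i}\in K$; comparing $K$-components yields $c\,(q_{1}^{2}+\cdots+q_{m}^{2})=1+p_{1}^{2}+\cdots+p_{m}^{2}$, and since $K$ is real the right side and $q_{1}^{2}+\cdots+q_{m}^{2}$ are nonzero, so $c\in\mathsf{S}(K)=K^{\times 2}$ directly by $p(K)=1$. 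This is exactly the content of the citation [Lam, Chap.~VIII, Lemma 1.4] used in the paper's one-line proof, and once you have it, your Kummer-theoretic detour through $K^{\times}\cap L^{\times 2}$ becomes unnecessary (though it is correct as written). So the gap is repairable, but as proposed the key assertion is unsupported.

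Concerning step (4): your nondyadic argument is exactly what the paper does implicitly (it sets $a=\alpha/2$ and $b=\sqrt{(4\beta-\alpha^{2})/4}$ without comment; when $2\in\mathcal{O}_{v}^{\times}$ one gets $a\in\mathcal{O}_{v}$ and then $b^{2}=\beta-\alpha^{2}/4\in\mathcal{O}_{v}$ forces $b\in\mathcal{O}_{v}$). Your unease about the dyadic case is well founded, but no henselization or Hensel-type argument can save it, because the statement is false there: take $K$ the field of real algebraic numbers (real closed, so $p(K)=1$), $v$ an extension of the $2$-adic valuation, and $f=X^{2}+X+1$; comparing linear coefficients in $f=(X-a)^{2}+b^{2}$ forces $a=-1/2\notin\mathcal{O}_{v}$. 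Thus the lemma tacitly assumes $\mathsf{char}(\kappa_{v})\neq 2$ (automatic in all of the paper's applications, where $\kappa_{v}$ is hereditarily pythagorean, hence real of characteristic $0$), and your suggestion to add that hypothesis is the correct resolution rather than a cop-out.
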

\begin{proof} Let $\alpha,\beta\in \mathcal{O}_{v}$ be such that $f=X^{2}+\alpha X+\beta.$ Thus, we can write $f=(X-\alpha/2)^{2}+(\beta-\alpha^{2}/4).$ Since $K[X]/(f)=K(\sqrt{\alpha^{2}-4\beta})$ is nonreal, then $-(\alpha^{2}-4\beta)\in \mathsf{S}(K)=K^{\times 2},$ by \cite[Chap. VIII, Lemma 1.4]{Lam}. The statement follows considering $a=\alpha/2$ and $b=\sqrt{(4\beta-\alpha^{2})/4}.$ 

\end{proof}

\begin{lem}\label{irreducibles} Assume that $K$ carries a $\mathbb{Z}$-valuation $v.$ Let $f\in K[X]$ be a non-constant square-free polynomial of degree $d,$ and let $F=K(X)(\sqrt{f}).$ Then $F$ is $K$-isomorphic to $$K(X)(\sqrt{\alpha\cdot q_{1}\cdots q_{r}}),$$ for some  $r\in\mathbb{N},q_{1},\ldots,q_{r}\in \mathcal{O}_{v}[X]$ monic irreducible such that $d=\sum_{i=1}^{r}\deg q_{i},$ and where $\alpha=1$ if $d$ is odd and otherwise $\alpha$ is the leading coefficient of $f.$  
\end{lem}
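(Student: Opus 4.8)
The plan is to realise the asserted isomorphism by a single rescaling of the variable, $X \mapsto \mu X$ for a suitable $\mu \in K^{\times}$, combined with the elementary fact that $L(X)(\sqrt{g}) = L(X)(\sqrt{gh^{2}})$ for every $h \in L(X)^{\times}$. The factor $\mu$ has to do two things at once: it must be chosen so large with respect to $v$ that the transformed irreducible factors of $f$ become monic polynomials with coefficients in $\mathcal{O}_{v}$, and it must carry the leading coefficient $\lc(f)$, so that modulo squares the leading coefficient of the transformed polynomial collapses to the value of $\alpha$ prescribed in the statement.

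First I would factor $f = \lc(f)\,p_{1}\cdots p_{s}$ in $K[X]$ with the $p_{i}$ pairwise distinct monic irreducible; here the hypothesis that $f$ is non-constant and square-free is used, and $\sum_{i=1}^{s}\deg p_{i} = d$. Fix a uniformizer $t$ of $v$ and set $\mu = \lc(f)\,t^{2N}$ with $N \in \mathbb{Z}$ to be chosen negative and large in absolute value. For each $i$ define $q_{i}(X) = \mu^{-\deg p_{i}}\,p_{i}(\mu X)$; this is monic of degree $\deg p_{i}$, and if $p_{i} = \sum_{j} a_{ij}X^{j}$ then each non-leading coefficient of $q_{i}$ has the form $a_{ij}\mu^{\,j-\deg p_{i}}$ with $j-\deg p_{i}\leq -1$. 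Since $v(\mu) = v(\lc(f)) + 2N$, each of these finitely many coefficients lies in $\mathcal{O}_{v}$ once $N$ is sufficiently negative, and a single such $N$ serves for all $i$; thus $q_{1},\dots,q_{s} \in \mathcal{O}_{v}[X]$. Each $q_{i}$ is irreducible in $K[X]$, being the image of the irreducible polynomial $p_{i}$ under the $K$-algebra automorphism $X \mapsto \mu X$ of $K[X]$, up to the scalar unit $\mu^{-\deg p_{i}}$; and since $q_{i}$ is monic over the discrete valuation ring $\mathcal{O}_{v}$, it is irreducible in $\mathcal{O}_{v}[X]$ by Gauss's lemma. Moreover $\sum_{i}\deg q_{i} = \sum_{i}\deg p_{i} = d$.

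Next I would use that $X \mapsto \mu X$ is a $K$-automorphism of $K(X)$ and hence induces a $K$-isomorphism $F = K(X)(\sqrt{f}) \cong K(X)(\sqrt{f(\mu X)})$. A direct computation gives
\[
 f(\mu X) = \lc(f)\prod_{i=1}^{s} p_{i}(\mu X) = \lc(f)\,\mu^{\,d}\prod_{i=1}^{s} q_{i}(X),
\]
and $\lc(f)\,\mu^{d} = \lc(f)^{\,d+1}\,t^{\,2Nd}$, where $t^{2Nd} = (t^{Nd})^{2} \in K^{\times 2}$. If $d$ is even, then $d+1$ is odd, so $\lc(f)^{d+1} \in \lc(f)\,K^{\times 2}$, whence $\lc(f)\,\mu^{d} \in \lc(f)\,K^{\times 2}$ and $K(X)(\sqrt{f(\mu X)}) = K(X)(\sqrt{\lc(f)\,q_{1}\cdots q_{s}})$, which has the asserted form with $\alpha = \lc(f)$. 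If $d$ is odd, then $d+1$ is even, so $\lc(f)^{d+1} \in K^{\times 2}$, whence $\lc(f)\,\mu^{d} \in K^{\times 2}$ and $K(X)(\sqrt{f(\mu X)}) = K(X)(\sqrt{q_{1}\cdots q_{s}})$, which has the asserted form with $\alpha = 1$. Taking $r = s$ and composing with the isomorphism above finishes the argument.

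The step I expect to be the crux is precisely the choice of the scaling factor. Scaling merely by a pure power of $t$ makes the $q_{i}$ integral but leaves $\lc(f)$ under the radical, which is harmless when $d$ is even but not when $d$ is odd; folding $\lc(f)$ into $\mu$ cures this, because $\lc(f)\,\mu^{d} = \lc(f)^{d+1}t^{2Nd}$ becomes a square exactly in the odd case. The complementary ingredient — that $v$ is a $\mathbb{Z}$-valuation, so that $v(\mu)$ can be pushed arbitrarily far into the negative — is what guarantees integrality of the transformed factors. Everything else (monicity, integrality and irreducibility of the $q_{i}$, and additivity of degrees) is routine bookkeeping.
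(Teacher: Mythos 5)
Your proof is correct and takes essentially the same route as the paper: both realize the isomorphism by a rescaling $X\mapsto cX$ with $c$ a suitable multiple of the leading coefficient of $f$, identify $\alpha$ by tracking the leading constant modulo squares according to the parity of $d$, and obtain monic factors in $\mathcal{O}_{v}[X]$ via integrality/Gauss's lemma. The only cosmetic difference is that the paper first multiplies $f$ by a square constant to put it in $\mathcal{O}_{v}[X]$ and then scales by the leading coefficient alone, whereas you fold a large even power of a uniformizer into the scaling factor to force the transformed coefficients into $\mathcal{O}_{v}$.
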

\begin{proof} Since $\textsf{Frac}(\mathcal{O}_{v})=K,$ we can assume that $f\in\mathcal{O}_{v}[X].$ Let $d=\deg f.$ Let $a_{0},\ldots,a_{d}\in\mathcal{O}_{v}$ be such that $f=\displaystyle\sum_{i=0}^{d}a_{i}X^{i}.$ Since $F$ is the function field of $Y^{2}=f(X),$ multiplying by $a_{d}^{2(d-1)},$ we have that $(a_{d}^{d-1}Y)^{2}=a_{d}^{d-1}g(X),$ where $g=\sum_{i=0}^{d}b_{i}X^{i},$ where $b_{d}=a_{d}^{d}$ and $b_{i}=a_{i}a_{d}^{d-1}$ for all $0\leq i\leq d-1.$ Replacing $X'=a_{d}X,$ and $Y'=a_{d}^{d-1}Y,$ we have that $F$ is $K$-isomorphic to $K(X')\left(\sqrt{a_{d}^{d-1}g(X')}\right).$ Write $g(X')=q_{1}(X')\ldots q_{r}(X'),$ where $q_{i}(X')\in K[X']$ are monic irreducible polynomials, for some $r\in\mathbb{N}.$ Since $\mathcal{O}_{v}$ is a Unique Factorization Domain, by Gauss' Lemma, we may assume that  $q_{1}(X'),\ldots,q_{r}(X')\in\mathcal{O}_{v}[X'],$ which concludes the proof.
\end{proof}

Let $v$ be a valuation on a field $K.$ By \cite[Corollary 2.2.2]{EP}, there exists a unique extension $w$ of $v$ to the rational function field $K(X)$ such that $w(X)=0$ and the residue $\overline{X}\in \kappa_{w}$ is transcendental over $\kappa_{v}.$ A valuation $w$ with these properties is called the \textit{Gauss extension of} $v$ \textit{to} $F$ \textit{with respect to} $X.$

\begin{lem}\label{generoceron} Let $n\in\mathbb{N}$ Assume that $K$ is a field carrying a henselian $\mathbb{Z}^{n}$-valuation with hereditarily euclidean residue field. Let $F/K$ be a regular function field of genus zero. Then
$$|G(F)|=\left\{\begin{array}{ll} 2^{n} & \mbox{ if $F$ is nonreal,}\\ 1 &\mbox{ if $F$ is real.}\end{array}\right.$$ 
\end{lem}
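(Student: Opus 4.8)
The plan is to compute $|G(F)| = |\mathsf{S}(F)/\mathsf{S}_2(F)|$ via the valuation-theoretic formula $|G(F)| = 2^{|\mathcal{X}^1(F/v)|}$ from \Cref{induccionsumas} (applicable since $r=1$, as $\kappa_v$ hereditarily euclidean means $|\kappa_v^\times/\kappa_v^{\times 2}| = 2$ and $p(\kappa_v(X)) \leq 2$ by the results quoted before \Cref{remark}, via \Cref{coroneandtwoorderings}). So the whole task reduces to showing that $\mathcal{X}^1(F/v)$ is empty when $F$ is real and has exactly one element when $F$ is nonreal. Recall $\mathcal{X}^1(F/v) = \{\mathcal{O} \in \Omega^*(F/v) \mid 2 \leq s(\kappa_{\mathcal{O}}) < \infty\}$, i.e. valuation rings $\mathcal{O}$ of $F$ that are residually transcendental over a coarsening of $v$, have \emph{nonruled} residue extension, and have \emph{nonreal} residue field.

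\medskip\noindent\textbf{The real case.} First I would treat $F$ real. Since $F/K$ has genus zero and is regular, $F$ is a conic function field; being real, the conic has a rational point over $K$ (a real function field of genus zero over a real field is rational — or one invokes that the corresponding quaternion algebra splits), so $F = K(t)$ is rational. Now any $\mathcal{O} \in \Omega^*(F/v)$ has a residue extension $\kappa_{\mathcal{O}}/\kappa_{\mathcal{O}\cap K}$ that is a function field in one variable (by \Cref{algebraicfunctionfield}) which, by \Cref{Ohm}, is \emph{ruled} since $F = K(t)$ is ruled over $K$ and hence over the relevant coarsening. This contradicts $\mathcal{O} \in \Omega^*(F/v)$. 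Therefore $\Omega^*(F/v) = \emptyset$, a fortiori $\mathcal{X}^1(F/v) = \emptyset$, and $|G(F)| = 2^0 = 1$.

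\medskip\noindent\textbf{The nonreal case.} Here I expect the main work. When $F$ is nonreal of genus zero, $F = K(X)(\sqrt{f})$ can be taken with $f$ a quadratic (or lower-degree) polynomial whose root field is nonreal; by \Cref{irreducibles} and \Cref{lema2cuadrados} (using $p(K) = 1$, valid since $\kappa_v$ is hereditarily euclidean, so $K$ is hereditarily pythagorean by \Cref{remark}, and $F$ nonreal forces $f$ nonreal of even degree after normalization) we may write $F = K(X)(\sqrt{-((X-a)^2 + b^2)})$ for suitable $a, b \in K$, equivalently $F$ is the function field of a conic without rational point. The candidate element of $\mathcal{X}^1(F/v)$ is the extension to $F$ of the \emph{Gauss extension} $w_0$ of $v_1 := \pi_1 \circ v$ to $K(X)$ with respect to $X - a$ (or rather a well-chosen coordinate): its residue field is $\kappa_{v_1}(\sqrt{-(\bar t^2 + \bar b^2)})$ type, a genus-zero nonreal (hence level $1$, so $s = 2^0$... no: I need $s(\kappa_{\mathcal{O}}) \geq 2$) — I must be careful that the residue field has level exactly $2$, not $1$; this is where nonrealness of the \emph{coefficients'} residues enters, and it is the delicate point. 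I would then argue: (i) such $\mathcal{O}$ exists and lies in $\Omega^*(F/v)$ because the residue extension is a nonreal genus-zero function field over $\kappa_{v_1}$, hence nonruled (a nonreal conic function field is not rational); (ii) its residue field has finite level $\geq 2$, so $\mathcal{O} \in \mathcal{X}^1(F/v)$; (iii) \emph{uniqueness} — any other $\mathcal{O}' \in \mathcal{X}^1(F/v)$ must have nonreal nonruled residue extension of a coarsening of $v$, but the only residually-transcendental extensions of coarsenings of the rank-one $v_1$ whose residue is nonreal and nonruled arise from this single branch point of the conic, forcing $\mathcal{O}' $ equivalent to $\mathcal{O}$. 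Concretely, any coarsening of $v$ refines to $v_1$ or is trivial on $K$; the residue extension being nonruled rules out Gauss extensions with rational residue, pinning down the unique valuation. Hence $|\mathcal{X}^1(F/v)| = 1$ and $|G(F)| = 2^n$.

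\medskip The hardest step is the uniqueness argument in the nonreal case and, relatedly, verifying that the residue field of the distinguished valuation has level \emph{exactly} $2$ (so it qualifies as $2 \leq s < \infty$) rather than being quadratically closed — this requires tracking how the henselian structure and the hereditarily euclidean residue field of $K$ interact with the conic equation, and is where one must pin down that there is precisely one nonruled residually-transcendental place. I would also double-check the reduction $F = K(X)(\sqrt{-((X-a)^2+b^2)})$ is available in the required normal form via \Cref{irreducibles}; the genus-zero hypothesis guarantees $\deg f \le 2$ after the transformations there, so only one irreducible nonreal quadratic factor can occur, matching $r = 1$ in the bound of \Cref{remark}.
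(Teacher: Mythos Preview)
Your proposal contains a genuine gap in each of the two cases.

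\textbf{Nonreal case.} You conclude that $|\mathcal{X}^{1}(F/v)|=1$ and then assert $|G(F)|=2^{n}$, but by the very formula you invoked at the outset one has $|G(F)|=2^{|\mathcal{X}^{1}(F/v)|}$, so $|\mathcal{X}^{1}(F/v)|=1$ would yield only $|G(F)|=2$. The mistake is that your argument only locates rank-one valuations: you take the Gauss extension of $v_{1}=\pi_{1}\circ v$ and argue uniqueness among rank-one places. But $\mathcal{X}^{1}(F/v)$ contains valuation rings of every rank from $1$ up to $n$, and in fact there is exactly one of each rank. The paper handles this (after the same normalisation $F\simeq K(X)(\sqrt{-(X^{2}+1)})$) by \emph{induction on $n$}: the Gauss extension $w$ yields the unique element $\mathcal{O}_{w}$ of $\Omega_{1}^{*}(F/v)$ (uniqueness by the ruled residue theorem for conics, \cite[Corollary~3.6]{BGu19}); since $\mathcal{X}^{1}(F/v)\subseteq\Omega^{*}(F/v)$ is saturated (\Cref{nonruledcontained}), every $\mathcal{O}\in\mathcal{X}^{1}(F/v)$ has $\mathcal{O}_{w}$ as its rank-one coarsening, and the proper refinements of $\mathcal{O}_{w}$ lying in $\mathcal{X}^{1}(F/v)$ correspond via \Cref{inversevaluations} to $\mathcal{X}^{1}(\kappa_{w}/\overline{v})$. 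Now $\kappa_{w}=\kappa_{v_{1}}(\overline{X})(\sqrt{-(\overline{X}^{2}+1)})$ is again a nonreal genus-zero function field over $\kappa_{v_{1}}$, and $\overline{v}$ is a henselian $\mathbb{Z}^{n-1}$-valuation on $\kappa_{v_{1}}$ with hereditarily euclidean residue field, so by induction $|\mathcal{X}^{1}(\kappa_{w}/\overline{v})|=n-1$ and hence $|\mathcal{X}^{1}(F/v)|=1+(n-1)=n$. Your step~(iii) is essentially the base of this induction, but you are missing the inductive mechanism that produces the higher-rank elements.

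\textbf{Real case.} Your argument rests on the claim that a real regular genus-zero function field over $K$ is rational, so that \Cref{Ohm} forces $\Omega^{*}(F/v)=\emptyset$. This claim is false over the fields in question. Take $K=\mathbb{R}(\!(t)\!)$ and $F=K(X)(\sqrt{-X^{2}-t})$: the conic $X^{2}+Y^{2}=-t$ has no $K$-point, since every nonzero sum of two squares in $K$ has even $t$-adic valuation, so $F$ is not rational; yet $F$ is real, because in the ordering of $K$ with $t<0$ the element $-X^{2}-t$ is positive for $X$ infinitesimal. (The quaternion-algebra reformulation you mention is equivalent to rationality and fails for the same reason.) The paper avoids this by citing \cite[Theorem~3]{TY03}, which gives $p(F)=2$ directly for any real genus-zero function field over a hereditarily pythagorean base, whence $G(F)$ is trivial.
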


\begin{proof} We assume that $F$ is nonreal. Then $F=K(X)(\sqrt{\alpha\cdot q}),$ where $q$ is a monic irreducible quadratic polynomial over $K,$ by \cite[Theorem 5.7.2, Theorem 5.7.3]{HK00}. Then $q(X)=(X-a)^{2}+b^{2},$ for some $a,b\in K,$ by Lemma \ref{lema2cuadrados}. Moreover $\alpha\in -K^{\times 2},$ because $F$ is nonreal. Replacing $X'=X-a,$ we have that $F$ is isomorphic to $K(X)(\sqrt{-(X^{2}+1)}).$ We prove that statement by induction on $n.$ For $n=0,$ it follows from \cite[Corollary 4.6]{BV09} that $G(F)$ is trivial. Let $n\geq 1.$ Let $v_{1}=\pi_{1}\circ v.$ Let $w'$ be the Gauss extension of $v_{1}$ to $K(X)$ with respect to $X$ and let $w$ be an extension of $w'$ to $F.$ Since $\kappa_{v_{1}}$ is real, the polynomial $X^{2}+1$ is irreducible over $\kappa_{v_{1}}.$ Then we have $\kappa_{w}=\kappa_{v_{1}}(\overline{X})\left(\sqrt{-(\overline{X}^{2}+1)}\right),$ and $\mathcal{O}_{w}\in\mathcal{X}^{1}(F/v)\cap \Omega_{1}(F).$ It follows from \cite[Corollary 3.6]{BGu19} that $\mathcal{O}_{w}$ is the unique valuation ring in $\mathcal{X}^{1}(F/v)\cap \Omega_{1}(F),$ and hence every $\mathcal{O}\in\mathcal{X}^{1}(F/v)$ is a refinement of $\mathcal{O}_{w},$ by Remark \ref{inversevaluations}. Let $\overline{v}$ be the residual valuation of $v$ modulo $v_{1}.$ Thus $|\mathcal{X}^{1}(F/v)|=1+|\{\mathcal{O}\in\Omega(\kappa_{w})\mid \mathcal{O}\in\mathcal{X}^{1}(\kappa_{w}/\overline{v})\}|,$ whereby $|G(F)|=2|G(\kappa_{w})|,$ by \Cref{induccionsumas}. Note that $\overline{v}$ is a henselian $\mathbb{Z}^{n-1}$-valuation on $\kappa_{v_{1}}$ with hereditarily euclidean residue field. Thus, by the induction hypothesis $|G(\kappa_{w})|=2^{n-1}.$ Therefore $|G(F)|=2^{n}.$

If $F$ is real, then $p(F)=2$ by \cite[Theorem 3]{TY03}, whereby $|G(F)|=1.$  
\end{proof}

\begin{ex}\label{exoptimality} Let $g,n\in\mathbb{N}.$ Let $F$ be the function field of the curve $$Y^{2}=-\displaystyle\prod_{i=0}^{g}(X^{2}+t_{n}^{2i})$$ over $K_{n}:=\mathbb{R}(\!(t_{1})\!)\ldots(\!(t_{n})\!).$ We will show that $|G(F)|=2^{n(g+1)}.$ Let $v_{t_{n}}$ be the $t_{n}$-adic valuation on $K_{n}.$ Note that $\mathcal{O}_{v_{t_{n}}}=K_{n-1}[\![t_{n}]\!].$ For $0\leq i\leq g,$ let $v_{i}$ be the Gauss extension of $v_{t_{n}}$ to $K_{n}(X)$ with respect to $Xt_{n}^{-i}$ and let $w_{i}$ be an extension of $v_{i}$ to $F.$ 

We claim that $\kappa_{w_{i}}\simeq K_{n-1}(X)(\sqrt{-(X^{2}+1)}),$ for all $0\leq i\leq g.$ Let $0\leq i\leq g,$ and let $Z:=Xt_{n}^{-i}.$ Hence $F=K_{n}(Z)(\sqrt{-g(Z)})$ since $K_{n}(X)=K_{n}(Z),$ where $g=(t_{n}^{2i}Z^{2}+1)(t_{n}^{2i-2}Z^{2}+1)\cdots(Z^{2}+1)\cdots(Z^{2}+t_{n}^{2(g-i)})$ is a square-free polynomial over $K_{n}.$ Then $g\in \mathcal{O}_{v_{i}}^{\times}$ and $\kappa_{w_{i}}=K_{n-1}(\overline{Z})(\sqrt{-\overline{g}})=K_{n-1}(\overline{Z})\left(\sqrt{-(\overline{Z}^{2}+1)}\right).$

Let $v$ be a henselian $\mathbb{Z}^{n}$-valuation on $K_{n}.$ Let $\overline{v}$ be the residual valuation of $v$ modulo $v_{t_{n}}.$ Since $|\mathcal{X}^{1}(\kappa_{w_{i}}/\overline{v})|=|\{\mathcal{O}\in \Omega(F)\mid \mathcal{O}\subsetneq \mathcal{O}_{w_{i}}\:\textit{and}\:\mathcal{O}\in\mathcal{X}^{1}(F/v)\}|$ by \Cref{inversevaluations} and $|\mathcal{X}^{1}(\kappa_{w_{i}}/\overline{v})|=n-1,$ by \Cref{generoceron}, for every $0\leq i\leq g,$ we have that $|\mathcal{X}^{1}(F/v)|\geq (g+1)+(n-1)(g+1)=n(g+1).$ Therefore $|G(F)|\geq 2^{n(g+1)}$ by \Cref{induccionsumas}, whereby $$|G(F)|=2^{n(g+1)},$$ by \Cref{remark}. 
\end{ex}

Let $F/K$ be a function field in one variable of genus zero, where $K$ is any hereditarily pythagorean field, that is $F=K(X)(\sqrt{aX^{2}+b)}),$ for some $a,b\in K.$ S. Tikhonov and V.I. Yanchevski\u{\i} in \cite[Theorem 3]{TY03} showed that if $F$ is real, then $p(F)=2.$ Hence, if we assume that $K$ carries a henselian $\mathbb{Z}^{n}$-valuation with hereditarily euclidean residue field, for some $n>0,$ then $|G(F)|=2^{n},$ implies that $F$ is a nonreal field of level two, by \Cref{generoceron}. This motivates the following question.

\begin{qu} Let $n,g\in\mathbb{N}.$ Let $K$ be a field carrying a henselian $\mathbb{Z}^{n}$-valuation with hereditarily euclidean residue field. Let $F/K$ be a hyperelliptic function field of genus $g.$ Does the equality $|G(F)|=2^{n(g+1)}$ imply that $F$ is a nonreal field ?
\end{qu} 

We show in \Cref{cotaoptimalg+1enR} that this question has an affirmative answer in the case where $n=1.$

\begin{prop}\label{realhyper} Let $g\in\mathbb{N}.$ Assume that $K$ carries a henselian $\mathbb{Z}$-valuation $v$ with $\kappa_{v}$ hereditarily pythagorean. Let $f\in K[X]$ be a square-free polynomial of degree $2g+2$ with all roots in $K(\sqrt{-1})\setminus K.$ Set $F=K(X)(\sqrt{f}).$ Let $w$ be a residually transcendental extension of $v$ to $F.$ Assume that $F$ is real. Then one of the following conditions holds:	
\begin{itemize}
	\item[(a)] $\kappa_{w}$ is nonreal with $s(\kappa_{w})=1.$ 
	\item[(b)] $\kappa_{w}/\kappa_{v}$ is ruled.
\item[(c)] $\kappa_{w}=\kappa_{v}(X)(\sqrt{h})$ is a real field with $h\in \kappa_{v}[X]$ a square-free polynomial with all roots in $\kappa_{v}(\sqrt{-1})\setminus \kappa_{v}.$
\end{itemize}
\end{prop}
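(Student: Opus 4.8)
The plan is to analyze a residually transcendental extension $w$ of $v$ to $F$ by passing to the residue field $\kappa_w$, which by \Cref{algebraicfunctionfield} is a function field in one variable over $\kappa_{w|_K}$, and by \Cref{Ohm} is ruled whenever $\kappa_{w|_K}(X) \to F$ is ruled. First I would set $v' = w|_K$; since $w$ is residually transcendental, $\mathcal{O}_w \in \Omega(F/v)$, and by \Cref{discreteresidually} (applied after noting $\mathsf{rk}(v)=1$) $v'$ is either trivial or equivalent to a $\mathbb{Z}$-valuation on $K$, with $\mathsf{rk}(w) = \mathsf{rk}(v')$. I would treat these two cases. If $v'$ is trivial, then $\kappa_{w}/K$ is already a function field in one variable, in fact a subextension of $F/K$ itself (more precisely $\kappa_w$ is $K$-isomorphic to $F$ since $w$ is the Gauss-type extension), so case (c) holds with $h = f$ viewed over $\kappa_v = K$ — but one must be careful: in the trivial case $\kappa_v = K$ only if $v$ itself is trivial, which it is not. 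So the genuine content is the case where $v'$ is (equivalent to) a $\mathbb{Z}$-valuation on $K$.

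In that main case, $\overline{v}$, the residual valuation of $v$ modulo $v'$, is trivial (both have rank one, $\mathsf{rk}(w)=\mathsf{rk}(v')=1=\mathsf{rk}(v)$ forces the residual rank to be zero), so $\kappa_{v'} = \kappa_v$ by the composition picture of \Cref{inversevaluations}. Thus $\kappa_w/\kappa_v$ is a function field in one variable. Now I would apply \Cref{irreducibles} to $F = K(X)(\sqrt f)$: after a $K$-isomorphism we may take $f = \alpha q_1 \cdots q_r \in \mathcal{O}_{v'}[X]$ with $q_i$ monic irreducible over $\mathcal{O}_{v'}$, $\alpha = \mathtt{lc}(f)$, and $\sum \deg q_i = 2g+2$. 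Using the hypothesis that all roots of $f$ lie in $K(\sqrt{-1}) \setminus K$, each $q_i$ is quadratic and $K[X]/(q_i)$ is nonreal (its root field is inside $K(\sqrt{-1})$), so by \Cref{lema2cuadrados} each $q_i = (X - a_i)^2 + b_i^2$ with $a_i, b_i \in \mathcal{O}_{v'}$. The key step is then to reduce mod $\mathfrak{m}_{v'}$: $\kappa_w = \kappa_v(\overline X)(\sqrt{\overline\alpha \cdot \overline{q_1} \cdots \overline{q_r}})$ where $\overline{q_i} = (\overline X - \overline{a_i})^2 + \overline{b_i}^2 \in \kappa_v[X]$ (here one uses that $w$ is the Gauss extension, so the $\overline{q_i}$ are the reductions and the class of the discriminant behaves well). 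After removing square factors among the $\overline{q_i}$ and absorbing squares, $\kappa_w = \kappa_v(\overline X)(\sqrt{h})$ for some square-free $h \in \kappa_v[X]$, each irreducible factor of which is either linear (when $\overline{q_i}$ has a root in $\kappa_v$, i.e. $\overline{b_i} = 0$, contributing a repeated root that cancels or a linear factor that, being a square times something, is absorbed) or of the form $(\overline X - \overline{a_i})^2 + \overline{b_i}^2$ with $\overline{b_i} \neq 0$, hence with roots in $\kappa_v(\sqrt{-1}) \setminus \kappa_v$.

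Now the trichotomy emerges from the structure of $h$ and $\overline\alpha$. If $\deg h \leq 1$, or more generally if $h$ becomes a square in $\kappa_v(\overline X)$, then $\kappa_w/\kappa_v$ is rational, so (b) holds. Otherwise, if $\overline\alpha \in -\kappa_v^{\times 2}$ (equivalently $\overline\alpha$ is negative at every ordering, using that $\kappa_v$ is hereditarily pythagorean so $\kappa_v^\times = \mathsf{S}(\kappa_v) \cdot (-\mathsf{S}(\kappa_v))$ componentwise... careful: hereditarily pythagorean does not give a unique ordering), I would argue that $\kappa_w = \kappa_v(\overline X)(\sqrt{\overline\alpha h})$ with $\overline\alpha h$ negative-definite-ish forces $\kappa_w$ nonreal, and then $s(\kappa_w) = 1$ because $\kappa_w$ contains $\kappa_v(\sqrt{-1})$-type data — actually $s(\kappa_w)=1$ follows since $-1$ becomes a square: reducing $\sqrt{\overline\alpha \cdot ((\overline X - \overline a)^2 + \overline b^2)}$ and specializing shows $-1$ is a sum of two squares, and with $p$ of the base controlled it is a square; this gives (a). The remaining case, $\overline\alpha \in \kappa_v^{\times 2}$ (or more precisely $\overline\alpha$ a square) and $h$ not a square with the stated root condition, gives exactly (c), where one must double-check $\kappa_w$ is real — this follows because $\kappa_w = \kappa_v(\overline X)(\sqrt h)$ with $h$ positive definite on $\kappa_v(\overline X)$ relative to suitable orderings extending those of $\kappa_v$, which is where one invokes that all roots of $h$ are non-real over $\kappa_v$.

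I expect the main obstacle to be the bookkeeping in the reduction step: showing that reducing modulo $\mathfrak{m}_{v'}$ genuinely turns the $q_i$ into the claimed quadratic (or linear) factors over $\kappa_v$ without spurious square-factor cancellations changing the root structure, and then cleanly separating the three cases by the square-class of the leading coefficient $\overline\alpha$ together with the reality of $\kappa_v(\overline X)(\sqrt{\overline\alpha h})$. In particular, ruling out that a "mixed" situation (some factors becoming linear, $h$ neither square nor with all roots non-real) can occur requires using that $F$ itself is assumed real — one must translate reality of $F = K(X)(\sqrt f)$ into a constraint (via orderings of $K$ and the sign of $f$ on an interval) that survives the reduction. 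I would handle this by noting that $F$ real means $f$ is positive at some ordering of $K(X)$ extending an ordering of $K$ compatible with $v$ (since $v$ is henselian, hence compatible with every ordering of $K$ by Baer--Krull), and tracing how that positivity descends to $\overline\alpha h$ being positive at the corresponding ordering of $\kappa_v(\overline X)$, which is exactly what excludes $\kappa_w$ being non-real unless case (a) or forces case (c) when $\kappa_w$ is real and (b) fails.
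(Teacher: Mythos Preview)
Your proposal has a genuine gap at its core: you assume, without justification, that $w_0 = w|_{K(X)}$ is the Gauss extension of $v$ with respect to the variable $X$. You write ``here one uses that $w$ is the Gauss extension, so the $\overline{q_i}$ are the reductions,'' but this is precisely what must be \emph{proved} or worked around, not assumed. A residually transcendental extension of $v$ to $K(X)$ is in general the Gauss extension with respect to some other element, say $(X-c)\pi^{-k}$ for $c \in \mathcal{O}_v$ and $k \in \mathbb{Z}$, or something more exotic; for each such $w_0$ the reduction of $q_i = (X-a_i)^2 + b_i^2$ looks completely different (it may be a unit, a uniformizer, or have positive value), and the formula $\kappa_w = \kappa_v(\overline{X})(\sqrt{\overline{\alpha}\cdot\overline{q_1}\cdots\overline{q_r}})$ simply fails. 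Your entire trichotomy downstream --- based on the square class of $\overline{\alpha}$ and whether $h$ is a square --- therefore only analyzes one particular $w$, not the arbitrary one in the statement. Relatedly, $\overline{\alpha}$ may be zero (when $v(\alpha)>0$), and you never address the case where $w_0$ is a ramified extension of $v$.

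The paper's proof confronts exactly this issue. It introduces $Y_i = (X-a_i)b_i^{-1}$ and for the given $w$ looks at the set $J$ of indices $i$ for which $w(Y_i)=0$ and $\overline{Y_i}$ is transcendental over $\kappa_v$; when $J\neq\emptyset$ this pins down $w_0$ as the Gauss extension with respect to one of the $Y_i$, and the reduction can proceed (leading to case~(c)). The cases $J=\emptyset$ and $v(\alpha)\notin 2\mathbb{Z}$ are handled separately and shown to give ruled residue fields (case~(b)). Case~(a) arises not from the sign of $\overline{\alpha}$ as you suggest, but from the relative algebraic closure $\ell$ of $\kappa_v$ in $\kappa_w$ being nonreal: since $\kappa_v$ is hereditarily pythagorean, any nonreal finite extension already contains $\sqrt{-1}$, so $s(\kappa_w)=1$. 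Your ordering/positivity heuristic for separating the cases is on the wrong track; the actual case split is governed by $\ell$, by whether $[\kappa_w:\kappa_{w_0}]=1$ or $2$, and by the parity of $v(\alpha)$.
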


\begin{proof} By Lemma \ref{lema2cuadrados} and by Lemma \ref{irreducibles} we may choose irreducible polynomials $q_{i}=(X-a_{i})^{2}+b_{i}^{2}$ with $a_{i},b_{i}\in\mathcal{O}_{v},$ and $\alpha\in K^{\times}$ such that $f=\alpha\cdot q_{0}\cdots q_{g}.$ Since $F$ is real, $\alpha\notin -K^{\times 2}.$ We set $Y_{i}=(X-a_{i})b_{i}^{-1}.$ 

Let $w$ be a residually transcendental extension of $v$ to $F$ and let $w_{0}=w|_{K(X)}.$ As $[F:K(X)]=2$ we have that $[\kappa_{w}:\kappa_{w_{0}}]\leq 2.$ Let $\ell$ be the relative algebraic closure of $\kappa_{v}$ in $\kappa_{w}.$ If $\ell$ is nonreal, since $\kappa_{v}$ is a hereditarily pythagorean, we have that $-1\in \kappa_{w}^{\times 2}$ (case $(a)$). 

Thus we assume now that $\ell$ is real. If $\kappa_{w}=\kappa_{w_{0}},$ since $w$ is a residually transcendental extension of $v,$ by Proposition \ref{Ohm} we have that $\kappa_{w}=\kappa_{w_{0}}=\ell (\overline{T}),$ for some $T\in\mathcal{O}_{w_{0}}^{\times}$ with $\overline{T}$ transcendental over $\kappa_{v}$ (case $(b)$). Now assume that $[\kappa_{w}:\kappa_{w_{0}}]=2$ and $\ell$ is real. Then $\Gamma_{w}=\Gamma_{w_{0}}.$ Let $f'=q_{0}\cdots q_{g}.$ Since $f\in F^{\times 2},$ we have that $w_{0}(f)\in 2\Gamma_{w_{0}}$ and since $f'\in \mathsf{S}_{2}(K(X))$ we have $w_{0}(f')\in 2\Gamma_{w_{0}},$ by \cite[Lemma 4.1]{BGVG}. Hence $w_{0}(\alpha)\in 2\Gamma_{w_{0}}.$ Consider $i\in\{0,\ldots,g\}.$ Set
$$Z_{i}=\left\{\begin{array}{ll} 1+Y_{i}^{2} & \mbox{if $w(Y_{i})\geq 0$,}\\ 1+Y_{i}^{-2} &\mbox{ if $w(Y_{i})<0$.}\end{array}\right.$$
Note that, since $\ell$ is real, $Z_{i}\in\mathcal{O}_{w}^{\times}$ for all $0\leq i\leq g.$ Thus 
$$\overline{Z_{i}}=\left\{\begin{array}{ll} 1+\overline{Y_{i}}^{2} & \mbox{if $w(Y_{i})=0$,}\\ \overline{1} &\mbox{ otherwise .}\end{array}\right.$$ 

Let us first assume that $v(\alpha)\notin 2\mathbb{Z}.$ This implies that $w_{0}$ is a ramified extension of $v.$ Hence $\overline{Y}_{i}$ is algebraic over $\kappa_{v}$ for all $i\in\{0,\ldots,g\},$ because otherwise $w_{0}$ must be a Gauss extension of $v$ with respect to some $Y_{i},$ which would contradict the ramification. Since $\ell$ is pythagorean, we have $\overline{Z_{i}}\in\ell^{\times 2}\subseteq \kappa_{w}^{\times 2}$ because $\ell$ is the relative algebraic closure of $\kappa_{v}$ in $\kappa_{w}.$ By Proposition \cite[Lemma 2.2]{BGu19} we have that $\kappa_{w}=\kappa_{w_{0}}(\sqrt{\overline{u}}),$ for any $u\in \alpha f'K(X)^{\times 2}$ with $w_{0}(u)=0.$ Since $\prod_{i=0}^{g}\overline{Z_{i}}\in \kappa_{w_{0}}^{\times 2},$ we have that $\kappa_{w}=\kappa_{w_{0}}\left(\sqrt{\overline{\alpha h^{2}}}\right)$ for any $h\in K(X)^{\times}$ such that $w_{0}(\alpha h^{2})=0.$ Let $K'=K(\sqrt{\alpha}).$ Let $w_{0}'$ be an extension of $w_{0}$ to $K'(X)$ and let $v'=w_{0}'|_{K'}.$ We have that $\kappa_{w_{0}'}=\kappa_{w_{0}}\left(\sqrt{\overline{\alpha h^{2}}}\right),$ because $K'(X)=K(X)(\sqrt{\alpha})$ and $w_{0}(\alpha h^{2}).$ Furthermore, $w_{0}'$ is a residually transcendental extension of $v'$ to $K'(X).$ Therefore $\kappa_{w}/\kappa_{v}$ is ruled, by Proposition \ref{Ohm} (case $(b)$).


Let us assume now that $v(\alpha)\in 2\mathbb{Z}.$ Let $J$ be the set of indices $i\in\{0,\ldots,g\}$ such that $w(Y_{i})=0$ and $\overline{Y}_{i}$ is transcendental over $\kappa_{v}.$ Assume first that $J=\emptyset.$ Then $\overline{Z_{i}}\in\kappa_{w}$ is algebraic over $\ell$ for all $0\leq i\leq g,$ and since $\ell$ is pythagorean, we have that $\kappa_{w}=\kappa_{w_{0}}(\sqrt{\alpha}).$ Therefore $\kappa_{w}/\kappa_{v}$ is ruled (case ($b$)). Now we assume that $J\neq \emptyset.$ Without loss of generality we put $J=\{0,\ldots,s\},$ for some $s\leq g.$ For $i\in J,$ let $v_{i}$ be the Gauss extension of $v$ to $K(X)$ with respect to $Y_{i}.$ By \cite[Corollary 2.2.2]{EP} we have that $w_{0}=v_{0}=\cdots =v_{s},\;\kappa_{w_{0}}=\kappa_{v}(\overline{Y_{0}})$ and $\Gamma_{w_{0}}=\mathbb{Z}.$ Thus $w_{0}(\alpha)\in 2\mathbb{Z},$ and we may consider some $\beta\in\mathcal{O}_{v}^{\times}$ such that $\alpha\in \beta K^{\times 2}.$ Let $j\in J.$ We have $Y_{j}^{2}+1\in\mathcal{O}_{w_{0}}^{\times}.$ We set $c_{j}=b_{0}b_{j}^{-1},d_{j}=(a_{0}-a_{j})b_{j}^{-1}.$ Since $Y_{j}=c_{i}Y_{0}+d_{j},$ we have $Y_{j}^{2}+1=c_{j}^{2}Y_{0}^{2}+2c_{j}d_{j}Y_{0}+d_{j}^{2}+1,$ and hence $f\in h(Y_{0})\cdot K^{\times 2},$ where $$h(Y_{0})=\beta(Y_{0}^{2}+1)\cdots(c_{s}^{2}Y_{0}^{2}+2d_{s}c_{s}Y_{0}+d_{s}^{2}+1)\displaystyle\prod_{i=s+1}^{g}Z_{i}.$$ Therefore $F=K(Y_{0})(\sqrt{h(Y_{0})}).$ Since $v$ is henselian and $\beta\notin -K^{\times 2},$ we have $\overline{\beta}\notin -\kappa_{v}^{\times 2}.$ Finally, since $h\in\mathcal{O}_{w_{0}}^{\times},$ we have that $h\in fK(Y_{0})^{\times 2}\cap\mathcal{O}_{w_{0}}^{\times},$ and we can conclude that $\kappa_{w}=\kappa_{v}(\overline{Y_{0}})\left(\sqrt{\overline{h}(\overline{Y_{0}})}\right)$ is a real field, where $\overline{h}\in \kappa_{v}[\overline{Y_{0}}]$ is a polynomial with all roots in $\kappa_{v}(\sqrt{-1})\setminus \kappa_{v}$ by Proposition \cite[Lemma 2.2]{BGu19} (case $(c)$).
\end{proof}

\begin{cor}\label{sumadedoscuadradosporunt} Let $n\in\mathbb{N}^{+}.$ Assume that $K$ carries a henselian $\mathbb{Z}^{n}$-valuation $v$ such that $\kappa_{v}$ is hereditarily euclidean. Let $f\in K[X]$ be a nonconstant square-free polynomial with all roots in $K(\sqrt{-1})\setminus K.$ We set $F=K(X)(\sqrt{f}).$ Assume that $F$ is real. Then $p(F)=2.$ 
\end{cor}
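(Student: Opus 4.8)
The plan is to show that $p(F)=2$ by proving that $\mathsf{S}(F)=\mathsf{S}_2(F)$, equivalently that $G(F)=G_1(F)$ is trivial, equivalently (by \Cref{induccionsumas}, noting $\kappa_v$ hereditarily euclidean gives $|\kappa_v^\times/\kappa_v^{\times2}|=2^1$ and $p(\kappa_v(X))\leq 2$) that $\mathcal{X}^1(F/v)=\emptyset$. So the real content is: every residually transcendental extension $w$ of a coarsening of $v$ to $F$ has residue field $\kappa_w$ which is either real, or ruled over $\kappa_w\cap K$, or nonreal with $\sqrt{-1}\in\kappa_w$; in each of these cases $\mathcal{O}_w\notin\mathcal{X}^1(F/v)$ (the ruled case by definition of $\Omega^*$, the others because $s(\kappa_w)\in\{1,\infty\}$, not $\geq 2^1$ finite).

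First I would reduce to the rank-one pieces. Given $\mathcal{O}\in\Omega^*(F/v)$, by \Cref{discreteresidually} $v_{\mathcal{O}}$ is a $\mathbb{Z}^r$-valuation for some $r\leq n$; let $w_1=\pi_1\circ v_{\mathcal{O}}$, a $\mathbb{Z}$-valuation on $F$ whose restriction to $K$ is a coarsening $v_1$ of $v$ (which is again henselian with hereditarily euclidean residue field $\kappa_{v_1}$, by \cite[Corollary 4.1.4]{EP} and \Cref{remark}-type reasoning). If $w_1|_K$ is trivial then $\kappa_{w_1}/\kappa_v$ is finitely generated algebraic of transcendence degree $\leq 1$; the residually transcendental case puts us in the hyperelliptic-over-$\kappa_v$ situation with $\kappa_v$ hereditarily euclidean, genus bounded, and one checks directly (using $s=1$ or reality) that no valuation ring in $\mathcal{X}^1$ appears. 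The main case is $w_1|_K=v_1$ nontrivial: here the residue field $\kappa_{w_1}$ is, by \Cref{algebraicfunctionfield}, either algebraic over $\kappa_{v_1}$ (then by \Cref{sumasdecuadradosalgebraico} and $\kappa_{v_1}$ hereditarily euclidean, $p'(\kappa_{w_1})\leq p(\kappa_{v_1}(X))\leq 2$, so $\kappa_{w_1}$ is real pythagorean or nonreal of level $1$ — not in $\mathcal{X}^1$) or a function field in one variable over $\kappa_{v_1}$. In the latter subcase I apply \Cref{realhyper}: writing $F=K(X)(\sqrt f)$ with $f$ square-free having all roots in $K(\sqrt{-1})\setminus K$, and $w_1$ a residually transcendental extension of $v_1$, if $\kappa_{w_1}$ is real then one of (a),(b),(c) holds. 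Case (a) gives $s(\kappa_{w_1})=1$; case (b) gives $\kappa_{w_1}/\kappa_{v_1}$ ruled; case (c) gives $\kappa_{w_1}=\kappa_{v_1}(X)(\sqrt h)$ real with $h$ square-free, all roots in $\kappa_{v_1}(\sqrt{-1})\setminus\kappa_{v_1}$ — but if $\kappa_{w_1}$ were real we are not in $\mathcal{X}^1$ anyway, and if $\kappa_{w_1}$ were nonreal then since $\kappa_{v_1}$ is hereditarily pythagorean (even hereditarily euclidean) the residual algebraic closure of $\kappa_{v_1}$ in $\kappa_{w_1}$ forces $\sqrt{-1}\in\kappa_{w_1}$, i.e. $s(\kappa_{w_1})=1$. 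So in every case $\mathcal{O}_{w_1}\notin\mathcal{X}^1(F/v)$.

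To finish: since $\Omega^*(F/v)$ is saturated (\Cref{nonruledcontained}) and $\mathcal{X}^1(F/v)\subseteq\Omega^*(F/v)$ consists of valuation rings whose rank-one coarsening $\mathcal{O}_{w_1}$ must itself lie in $\mathcal{X}^1(F/v)$ — because reality or level $1$ of $\kappa_{\mathcal{O}}$ forces the same on the residue field of any coarsening, and because $2^1\leq s(\kappa_{\mathcal{O}})<\infty$ propagates downward (as in the proof of \Cref{induccionsumas}, $s(\kappa_{\overline w})=s(\kappa_w)$) — having shown that no rank-one member occurs we conclude $\mathcal{X}^1(F/v)=\emptyset$. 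Hence $|G(F)|=2^0=1$ by \Cref{induccionsumas}, so $\mathsf{S}(F)=\mathsf{S}_2(F)$; combined with the fact that $F$ is real (so $-1\notin\mathsf{S}(F)$, hence $\mathsf{S}_1(F)=F^{\times2}\subsetneq\mathsf{S}(F)$ as $F$ has transcendence degree one and is not pythagorean-trivial — concretely $1+X^2\in\mathsf{S}_2(F)\setminus F^{\times2}$ after adjusting coordinates, or simply because $p(F)\geq 2$ always holds for such function fields), this gives $p(F)=2$.

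The hard part will be the careful bookkeeping in the reduction from a $\mathbb{Z}^r$-valuation $\mathcal{O}\in\Omega^*(F/v)$ to its rank-one coarsening and back: I must verify that the square-free polynomial $f$ defining $F$ can be arranged (via \Cref{lema2cuadrados}, \Cref{irreducibles}) to have coefficients in the relevant valuation ring so that \Cref{realhyper} applies to $v_1$ rather than $v$, and that the hypothesis "all roots of $f$ in $K(\sqrt{-1})\setminus K$" descends appropriately — this requires knowing $\kappa_{v_1}(\sqrt{-1})$ controls the roots of the reduced polynomial $h$, which is exactly the content of the "root field" bookkeeping in \Cref{remark} and the conclusion of \Cref{realhyper}(c). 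A secondary subtlety is handling the case where $w_1|_K$ is trivial: there one needs that over a hereditarily euclidean field a genus-$g$ hyperelliptic function field $\kappa_v(X)(\sqrt h)$ with all roots of $h$ in $\kappa_v(\sqrt{-1})$ has $p=2$ when real — but this is precisely the $n=0$ base case of the very statement being proved, so the induction on $n=\rk(v)$ must be set up cleanly from the start, with the $n=0$ case being \cite[Corollary 4.6]{BV09} (or \Cref{generoceron} in genus $0$, \cite[Theorem 3.10]{BV09} in general together with reality forcing triviality of $G$).
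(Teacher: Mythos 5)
Your overall strategy (show $\mathcal{X}^{1}(F/v)=\emptyset$ and conclude via \Cref{coroneandtwoorderings}/\Cref{induccionsumas}, with \Cref{realhyper} handling rank-one residually transcendental extensions and an induction on $n$) is the paper's strategy, and your treatment of the rank-one valuations themselves is fine. The gap is in your final reduction from higher rank to rank one. You claim that if $\mathcal{O}\in\mathcal{X}^{1}(F/v)$ then its rank-one coarsening $\mathcal{O}_{w_{1}}$ again lies in $\mathcal{X}^{1}(F/v)$ because ``$2\leq s(\kappa_{\mathcal{O}})<\infty$ propagates downward''. It does not: the identity $s(\kappa_{\overline w})=s(\kappa_{w})$ used in \Cref{induccionsumas} only says that the residue field of the composite valuation coincides with the residue field of the residual valuation; the inequality between the level of $\kappa_{w_{1}}$ and the level of its residue field $\kappa_{\mathcal{O}}$ goes the wrong way, $s(\kappa_{\mathcal{O}})\leq s(\kappa_{w_{1}})$, so finiteness of $s(\kappa_{\mathcal{O}})$ does not force $\kappa_{w_{1}}$ to be nonreal (a real field can perfectly well have a discrete valuation with nonreal residue field --- that is exactly the phenomenon the whole paper studies, e.g.\ Tikhonov's example). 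In \Cref{induccionsumas} the nonreality of the coarser residue field was obtained independently, from the odd value of a sum of squares, not from the refinement. Consequently, in case (c) of \Cref{realhyper}, where $\kappa_{w_{1}}$ is real, your observation ``$\mathcal{O}_{w_{1}}\notin\mathcal{X}^{1}$ anyway'' rules out nothing about refinements of $\mathcal{O}_{w_{1}}$, and a rank $\geq 2$ member of $\mathcal{X}^{1}(F/v)$ sitting under such a $w_{1}$ is not excluded by your argument.

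This is precisely where the paper does more: in case (c) the residue field $\kappa_{w_{1}}=\kappa_{v_{1}}(X)(\sqrt{h})$ again satisfies the hypotheses of the corollary over $(\kappa_{v_{1}},\overline v)$ with $n-1$ in place of $n$, so the induction hypothesis gives $p(\kappa_{w_{1}})=2$; combining the three cases, every residually transcendental extension $w$ of $v_{1}$ to $F$ has $p'(\kappa_{w})=2$. Then, if some $\mathcal{O}\in\mathcal{X}^{1}(F/v)$ existed, the residual valuation of $v_{\mathcal{O}}$ modulo its rank-one coarsening $\mathcal{O}'$ would be a nonreal, nondyadic valuation with discrete value group on $\kappa_{\mathcal{O}'}$, and \Cref{nivelpythagoras} would force $p'(\kappa_{\mathcal{O}'})>s(\kappa_{\mathcal{O}})\geq 2$, contradicting $p'(\kappa_{\mathcal{O}'})=2$. (An equivalent repair of your argument: use \Cref{inversevaluations} and the nonruledness transfer from \Cref{Ohm}/\Cref{nonruledcontained} to see that a refinement $\mathcal{O}\in\mathcal{X}^{1}(F/v)$ of $\mathcal{O}_{w_{1}}$ yields $\mathcal{O}_{\overline w}\in\mathcal{X}^{1}(\kappa_{w_{1}}/\overline v)$, which is empty by the induction hypothesis together with \Cref{coroneandtwoorderings}.) In either form, the induction hypothesis must be applied to the case-(c) residue fields and an extra argument (\Cref{nivelpythagoras} or the refinement correspondence) is needed; your cases (a) and (b) are indeed handled by level-$1$ propagation and saturation, but case (c) is the essential one and is currently open in your write-up.
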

\begin{proof} We prove the statement by induction on $n.$ Note that by Corollary \ref{coroneandtwoorderings} we have that $p(F)=2$ if and only if $\mathcal{X}^{1}(F/v)$ is empty. Assume $n=1.$ Let $w$ be a residually transcendental extension of $v.$ It follows by Proposition \ref{realhyper} that either $s(\kappa_{w})=1$ or $\kappa_{w}$ is real. Hence $\mathcal{X}^{1}(F/v)=\emptyset$ and $p(F)=2.$ Assume now that $n>1.$ We show that $\mathcal{X}^{1}(F/v)$ is empty. Let $v_{1}=\pi_{1}\circ v.$ Let $\overline{v}$ be the residual valuation of $v$ modulo $v_{1}.$ Then $\overline{v}$ is a henselian $\mathbb{Z}^{n-1}$-valuation on $\kappa_{v_{1}}$ such that $\kappa_{v}=\kappa_{\overline{v}}.$ It follows by induction hypothesis and by \Cref{realhyper} that all the residually transcendental extensions $w$ of $v_{1}$ to $F$ satisfy $p'(\kappa_{w})=2.$ If we had some $\mathcal{O}\in\mathcal{X}^{1}(F/v),$ then we would obtain that the residue field of the rank-one coarsening $\mathcal{O}'$ of $\mathcal{O}$ would have $p'(\kappa_{\mathcal{O}'})>2$ by Proposition \ref{nivelpythagoras}, which is a contradiction. Therefore $\mathcal{X}^{1}(F/v)=\emptyset,$ whereby $p(F)=2$ by \Cref{induccionsumas}. 
\end{proof}

Note that, if $f\in K[X]$ is assumed to be monic in \Cref{sumadedoscuadradosporunt}, then $F/K(X)$ is a totally positive quadratic extension and hence the above result follows by \cite[Corollary 4.10]{BV09}.

\begin{cor}\label{cotaoptimalg+1enR} Let $g\in\mathbb{N}.$ Assume that $K$ carries a henselian $\mathbb{Z}$-valuation $v$ with hereditarily euclidean residue field. Let $F/K$ be a hyperelliptic function field of genus $g.$ Let $f\in K[X]$ be a square-free polynomial such that $F=K(X)(\sqrt{f}).$ If $|G(F)|=2^{g+1},$ then $F$ is nonreal. 
\end{cor}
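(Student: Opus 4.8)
The plan is to prove the contrapositive: assuming $F$ is real, I will show $|G(F)| \leq 2^g$, i.e. strictly less than the maximal value $2^{g+1}$. By \Cref{induccionsumas} (or rather \Cref{coroneandtwoorderings}, since $\kappa_v$ is hereditarily euclidean so $|\kappa_v^\times/\kappa_v^{\times 2}| = 2$), we have $|G(F)| = 2^{|\mathcal{X}^1(F/v)|}$, so it suffices to show $|\mathcal{X}^1(F/v)| \leq g$ when $F$ is real. By \Cref{remark}, the bound $|G(F)| = 2^{g+1}$ can only be attained when $f$ has degree $2(g+1)$ and splits into $g+1$ distinct nonreal irreducible factors over $K$; moreover each such factor is quadratic with root field $K_i \subseteq K(\sqrt{-1})$. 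So I may assume $f$ has all its roots in $K(\sqrt{-1}) \setminus K$, which is precisely the hypothesis of \Cref{realhyper}.

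\textbf{Main argument.} Now apply \Cref{realhyper} with $n=1$. Every element $\mathcal{O} \in \mathcal{X}^1(F/v)$ has rank at most $n = 1$ by \Cref{discreteresidually}, and since $\mathcal{X}^1(F/v) \subseteq \Omega^*(F/v)$, the associated valuation $w = v_{\mathcal{O}}$ is a residually transcendental extension of (a coarsening of) $v$ with $\kappa_w/\kappa_{w|_K}$ nonruled and $2 \leq s(\kappa_w) < \infty$. But \Cref{realhyper} says that for any residually transcendental extension $w$ of $v$, one of three cases holds: (a) $\kappa_w$ nonreal with $s(\kappa_w) = 1$; (b) $\kappa_w/\kappa_v$ is ruled; (c) $\kappa_w$ is real. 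In case (a), $s(\kappa_w) = 1 < 2$, so $\mathcal{O} \notin \mathcal{X}^1(F/v)$; in case (b), $\mathcal{O}$ is ruled, so $\mathcal{O} \notin \Omega^*(F/v) \supseteq \mathcal{X}^1(F/v)$; in case (c), $\kappa_w$ real means $s(\kappa_w) = \infty$, so again $\mathcal{O} \notin \mathcal{X}^1(F/v)$. Hence $\mathcal{X}^1(F/v) = \emptyset$ when $F$ is real, giving $|G(F)| = 2^0 = 1 \leq 2^g$, a contradiction with $|G(F)| = 2^{g+1} \geq 2$.

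\textbf{The obstacle.} The argument above actually proves something stronger than needed — namely $p(F) = 2$ when $F$ is real (this is essentially \Cref{sumadedoscuadradosporunt} with $n=1$, or rather a slight strengthening dropping the monic hypothesis). So the real content is already packaged in \Cref{realhyper}, and the proof reduces to bookkeeping: one must carefully check that the reduction in \Cref{remark} (replacing $f$ by a product of quadratic polynomials with roots in $K(\sqrt{-1})$) is legitimate — i.e., that the $K$-isomorphism class of $F$, and hence $|G(F)|$, is unchanged, and that the hypothesis "$|G(F)| = 2^{g+1}$" forces exactly the configuration to which \Cref{realhyper} applies. The one point requiring slight care is the leading coefficient $\alpha$: when $F$ is real we have $\alpha \notin -K^{\times 2}$, but $\alpha$ need not be a square, so $F = K(X)(\sqrt{\alpha q_0 \cdots q_g})$ rather than $K(X)(\sqrt{q_0\cdots q_g})$; this is exactly what \Cref{realhyper} is set up to handle (it tracks $\alpha$ through the case analysis), so no extra work is needed. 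I expect no genuine obstacle — the statement is essentially a corollary of \Cref{realhyper} combined with \Cref{coroneandtwoorderings} and \Cref{remark}.
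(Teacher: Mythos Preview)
Your proposal is correct and follows essentially the same route as the paper. The paper's proof invokes \Cref{sumadedoscuadradosporunt} (whose $n=1$ case is itself proved via \Cref{realhyper}) together with \Cref{remark} to conclude that if $|G(F)|=2^{g+1}$ then the leading coefficient $\alpha$ lies in $-K^{\times 2}$, whence $F$ is nonreal; you instead inline the $n=1$ case of \Cref{sumadedoscuadradosporunt} by applying \Cref{realhyper} directly to show $\mathcal{X}^1(F/v)=\emptyset$ when $F$ is real, which you correctly observe amounts to the same thing.
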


\begin{proof} It follows by Proposition \ref{remark} and by Lemma \ref{lema2cuadrados} that we may choose irreducible polynomials $q_{i}=(X-a_{i})^{2}+b_{i}^{2},$ with $a_{i},b_{i}\in\mathcal{O}_{v},$ and $\alpha\in K^{\times}$ such that $f=\alpha\cdot q_{0}\cdots q_{g}.$ For $i\in\{0,\ldots,g\}$ let $Y_{i}:=b_{i}^{-1}(X-a_{i}).$ If $\alpha\notin -K^{\times 2},$ then $p(F)=2,$ by Corollary \ref{sumadedoscuadradosporunt}. Therefore, under the assumption that $|G(F)|=2^{g+1},$ we have that $\alpha\in -K^{\times 2}.$ Since $\mathsf{S}_{2}(F)$ is a group, there exists $\eta_{1},\eta_{2}\in K(X)$ such that $F=K(X)(\sqrt{-(\eta_{1}^{2}+\eta_{2}^{2})}),$ that is, $F$ is nonreal.
\end{proof}

\begin{qu} Let $n,g\in\mathbb{N}.$ Assume that $K$ carries a henselian $\mathbb{Z}^{n}$-valuation with hereditarily euclidean residue field. Let $F/K$ be a function field in one variable of genus $g\in\mathbb{N}.$ Is $|G(F)|\leq 2^{n(g+1)}$ for every function field in one variable $F/K$ of genus $g$?

\end{qu} 

\subsection*{Acknowledgments} I thank Marco Zaninelli, Parul Gupta and Nicolas Daans for useful conversation on these topics. I thank my thesis advisors David Grimm and Karim Johannes Becher. 

This work was supported by CONICYT-PFCHA/Doctorado Nacional/2017-folio 21170477, and by Universidad de Santiago de Chile (Proyecto DICYT, c\'odigo 041933G), by the FWO Odysseus Programme (project G0E6114N), and by the Bijzonder Onderzoeksfonds (BOF), Universiteit Antwerpen, (project BOF-DOCPRO4, 2533).


\bibliographystyle{plain}

\begin{thebibliography}{9}



\bibitem{BDGMZ}
K.J.~Becher, N.~ Daans, D.~Grimm, G. Manzano-Flores, M. Zaninelli.\emph{The Pythagoras number of a rational function field in two variables.}
\url{https://arxiv.org/abs/2302.11425} (2023).





\bibitem{BG20}
K.J. Becher, D.~Grimm, \emph{Nonsplit conics in the reduction of an arithmetic curve}, 
https://arxiv.org/abs/2005.11855.

\bibitem{BGVG}
K.J. Becher, D.~Grimm, and J.~Van~Geel, \emph{Sums of squares in algebraic
  function fields over a complete discretely valued field}, Pacific J.~of Math
  \textbf{267} (2014), 257--276.

\bibitem{BGu19}
K.J. Becher, P.~Gupta, \emph{Ruled residue theorem for function fields of conics.} Journal of Pure and Applied Algebra, \textbf{225(06)} (2021) 106638. 

\bibitem{BV09} K.J. Becher and J. Van Geel, \emph{Sums of squares in function fields of hyperelliptic curves}, Mathemastische Zeitschrift, 261 (4): (2009) 829 - 844.


\bibitem{Be78}
Becker, E. \emph{Hereditarily pythagorean fields and orderings of higher level.} Monografias de Matem\'atica Vol. 29. Instituto de matematica pura e aplicada, Rio de Janeiro (1978).

\bibitem{Br76}
L. Br\"ocker, \emph{Characterization of fans and hereditarily pythagorean fields.} Math. Z. \textbf{152}, (1976), 149-163.


\bibitem{EP} A.J. Engler, A.~Prestel, \emph{Valued fields}, Springer-Verlag, 2005.




\bibitem{GR16}
D.~Grimm, \emph{On an isotropy criterion for quadratic forms over function fields of curves over non-dyadic complete discrete valuation rings}. Algebra, logic and number theory, 95–103, Banach Center Publ., 108, Polish Acad. Sci. Inst. Math., Warsaw, (2016).

\bibitem{HHK09}
D. Harbater, J. Hartmann, and D. Krashen, \emph{Applications of patching to quadratic forms and central simple algebras}, Inventiones Mathematicae \textbf{178} (2009), 231-269.

\bibitem{Jac64}
N.~Jacobson, \emph{Lectures in Abstract Algebra, III. Theory of Fields and Galois Theory.} Springer-Verlag, Berlin.

\bibitem{HK00}
H. Koch. \emph{Number theory: Algebraic numbers and functions.} Graduate Studies in Mathematics, 2000.

\bibitem{Lam}
T.Y. Lam, \emph{Introduction to quadratic forms over fields}, American
  Mathematical Society, 2005.

\bibitem{Liu02}
Q. Liu, \emph{Algebraic Geometry and Arithmetic curves}, Oxford Graduate Texts in Mathematics, Oxford University Press, Oxford, 2002.



\bibitem{ohm}
J.~Ohm, \emph{The ruled residue theorem for simple transcendental extensions of
  valued fields}, Proceedings of the American Mathematical Society \textbf{89}
  (1983), no.~1, 16--18.
 
\bibitem{Rib57}
P.~Ribenboim, \emph{Le th\'eor\`eme d' approximation pour les valuations de krull}. Math. Z. \textbf{68}, 1-18 (1957).

\bibitem{TVY06}
S.V.~Tikhonov, J.~Van Geel, V.I~Yanchevski\u{\i}, \emph{Pythagoras number of function fields of hyperelliptic curves with good reduction}, Manuscripta Math. \textbf{119} (2006), 305-322. 

\bibitem{TY03}
S.V~Tikhonov, V.I~Yanchevski\u{\i}, \emph{Pythagoras number of function fields of conics over hereditarily pythagorean fields.} Dokl. Nats. Akad. Nauk Belarusi \textbf{47}, 5-8 (2003).

\bibitem{Wa89}
S. Warner, \emph{Topological fields}. Mathematics studies \textbf{157}, North Holland, Amsterdam (1989).

\bibitem{Witt34}
E.~Witt, \emph{Zerlegung reeller algebraischer Funktionen in Quadrate. Schiefk\"orper \"uber reellem Funktionenk\"orper}. J. Reine Angew. math \textbf{171} (1934), 4-11.

\end{thebibliography}

\end{document}